\newtheorem{thm}{Theorem}[section]
\newtheorem{prop}[thm]{Proposition}
\newtheorem{cor}[thm]{Corollary}
\newtheorem{lem}[thm]{Lemma}
\theoremstyle{definition}
\newtheorem{remark}[thm]{Remark}
\newtheorem{obs}[thm]{Observation}
\newcommand{\gO}{\Omega}
\newcommand{\go}{\omega}
\newcommand{\cO}{\mathcal{O} }
\newcommand{\cC}{\mathcal{C} }
\newcommand{\cE}{\mathcal{E} }
\newcommand{\cG}{\mathcal{G} }
\newcommand{\cI}{\mathcal{I} }
\newcommand{\cJ}{\mathcal{J} }
\newcommand{\Oh}[0]{{\mathcal O}}
\newcommand{\cW}{\mathcal{W} }
\newcommand{\beq}[1]{\begin{equation}\label{#1}}
\newcommand{\enq}[0]{\end{equation}}
\newcommand{\eps}{\epsilon}
\newcommand{\gb}[0]{\beta }
\newcommand{\gD}[0]{\Delta }
\newcommand{\gG}[0]{\Gamma }
\newcommand{\gL}[0]{\Lambda}
\newcommand{\gs}[0]{\sigma}
\newcommand{\gz}[0]{\zeta}
\newcommand{\vp}[0]{\varphi}
\newcommand{\bn}[0]{\bigskip\noindent}
\newcommand{\mn}[0]{\medskip\noindent}
\newcommand{\nin}[0]{\noindent}
\newcommand{\ov}[0]{\bar}
\newcommand{\sub}[0]{\subseteq}
\newcommand{\sm}[0]{\setminus}
\newcommand{\0}[0]{\emptyset}
\newcommand{\ra}[0]{\rightarrow}
\newcommand{\mis}[0]{\mbox{\rm{mis}}}
\newcommand{\MIS}[0]{\mbox{\rm{MIS}}}
\newcommand{\im}{\mbox{\rm{im}}}
\newcommand{\supp}[0]{\mbox{\rm{supp}}}
\begin{document}

\title{The number of maximal independent sets in the Hamming cube}

\author{Jeff Kahn \and Jinyoung Park}
\thanks{The authors are supported by NSF Grant DMS1501962 and BSF Grant 2014290.}
\thanks{JK was supported by a Simons Fellowship.}
\email{jkahn@math.rutgers.edu, jp1324@math.rutgers.edu}
\address{Department of Mathematics, Rutgers University \\
Hill Center for the Mathematical Sciences \\
110 Frelinghuysen Rd.\\
Piscataway, NJ 08854-8019, USA}

\begin{abstract}
Let $Q_n$ be the $n$-dimensional Hamming cube and $N=2^n$.  
We prove that the number of maximal independent sets in $Q_n$ is asymptotically
\[2n2^{N/4},\]
as was conjectured by Ilinca and the first author in connection with a question of Duffus, Frankl and R\"odl.

The value is a natural lower bound derived from a connection between maximal independent sets and induced matchings. The proof that it is also an upper bound draws on various tools, among them “stability” results for maximal independent set counts and old and new results on isoperimetric behavior in $Q_n$.
\end{abstract}

\maketitle

\section{Introduction}

\subsection{Theorems and definitions}

The purpose of this paper is to prove the following statement, which was 
conjectured by Ilinca and the first author \cite{IK} in connnection with 
a question of Duffus, Frankl and R\"odl \cite{DFR}.
We use $\mis(G)$ for the number of maximal independent sets (MIS's) of a graph $G$,
$Q_n$ for the $n$-dimensional Hamming cube and $N$ for $2^n$. 
(A few basic definitions are recalled below.)

\begin{thm}\label{thm:main}
\beq{misqn}
\mis(Q_n) \sim 2n2^{N/4}.
\enq
\end{thm}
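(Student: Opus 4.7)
The plan is to establish matching asymptotic lower and upper bounds for $\mis(Q_n)$.

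\textbf{Lower bound.} I would construct $(1+o(1))\cdot 2n\cdot 2^{N/4}$ MIS's explicitly. The factor $2n$ should come from a choice of coordinate direction $i\in[n]$ together with a sign; for each such pair, an induced-matching--style structure of size $N/4$ yields $2^{N/4}$ MIS's by making $N/4$ independent binary choices, each choice picking one endpoint of a matched pair into the MIS. I would expect this construction, and verifying that it produces genuine and distinct MIS's, to be essentially the content of \cite{IK}.

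\textbf{Upper bound.} Parametrize each MIS as $I = A \cup B$ with $A = I \cap \cE$ and $B = I \cap \cO$. Maximality forces $A = \cE\sm N(B)$ and $B = \cO\sm N(A)$, so $I$ is determined by either of $A$, $B$ subject to the closure condition $B = \{y\in\cO : N(y)\sub N(B)\}$. Assume $|A|\ge |B|$ (costing a factor of $2$). I would then split by the size and shape of $B$:
\begin{itemize}
\item \emph{Main regime.} When $|B|$ is close to the size realized by the extremal construction and $B$ is ``slice-like'' near one of $2n$ canonical shapes, a stability form of the vertex-isoperimetric inequality in $Q_n$ --- the new isoperimetric input flagged in the abstract --- should show that any closed $B$ in this regime lies $o(|B|)$-close to one of these slices, and counting fluctuations around each of the $2n$ shapes should give $(1+o(1))\cdot 2^{N/4}$ per shape.
\item \emph{Tail regimes.} When $|B|$ is atypically small, atypically large, or $B$ is structurally far from any slice, I would combine classical Harper-type vertex-isoperimetry with the stability results for MIS counts in bipartite regular graphs (alluded to in the abstract) and, if needed, an entropy or container argument, to bound the contribution by $2^{(1-\gO(1))N/4}$, hence negligible.
\end{itemize}

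\textbf{Main obstacle.} Getting the precise prefactor $2n$, rather than merely $2^{(1+o(1))N/4}$, is the crux. A coarse entropy or container argument gives the right exponent but not the constant; pinning down $2n$ forces a sharp characterization of the $2n$ extremal families of closed sets $B$. The key technical input must therefore be a robust stability form of vertex-isoperimetry in $Q_n$, accurate to within an $o(1)$ relative error in the count --- a result not implied by Harper-style theorems alone. Proving this stability statement and showing that the fluctuations around each of the $2n$ extremal shapes contribute exactly $(1+o(1))\cdot 2^{N/4}$ will, I expect, be the technical heart of the paper.
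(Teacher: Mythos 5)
Your lower bound sketch matches the paper's (the $2n$ canonical induced matchings, each a perfect induced matching of size $N/4$ on half the cube, each yielding $2^{N/4}$ MIS's by choosing a transversal, with negligible overlaps). The upper bound, however, rests on a structural picture that is wrong for this problem. You propose to read the $2n$ phases off the odd part $B=I\cap\cO$ via isoperimetric stability, with $B$ lying $o(|B|)$-close to one of $2n$ canonical ``slices.'' But a typical extremal MIS is (essentially) a transversal of a canonical matching $M^*$: it contains exactly one endpoint of each of the $N/4$ edges of $M^*$, and since each such edge has one even and one odd endpoint, $B$ is an essentially \emph{arbitrary} subset of the $N/4$-element set $\cO\cap V(M^*)$. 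Such a $B$ has size concentrated near $N/8$ but is at Hamming distance about $N/8$ from any fixed ``shape''; it is not within $o(N)$ of a slice, and no vertex-isoperimetric stability statement applied to $B$ alone can simultaneously pin $B$ near one of $2n$ reference sets and leave room for the $2^{N/4}$ configurations that must survive. (The even/odd decomposition with a small, nearly closed minority side is the correct picture for counting \emph{ordinary} independent sets, \`a la Korshunov--Sapozhenko; for MIS's the order parameter is the induced matching, not the bipartition class.)

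The paper's route is: (1) a stability theorem for MIS counts in triangle-free graphs, combined with a degree-peeling algorithm, shows that all but $2^{(1-\gO(\log^3 n/n))N/4}$ MIS's $I$ admit an induced matching $M(I)$ of size $(1-o(1))N/4$ each of whose edges meets $I$; (2) a new isoperimetric stability theorem (this is the input you correctly anticipate, but it is applied to the matching structure after projecting along a direction, not to $I\cap\cO$) shows every such near-maximum induced matching agrees with some $M^*$ on $(1-o(1))N/4$ edges; (3) the hardest step bounds, for each $M^*$, the number of MIS's whose matching is close to $M^*$ but which miss at least one edge of $M^*$, by $2^{N/4-\go(n/\log n)}$ --- this needs Sapozhenko's container lemma for $2$-linked sets with small boundary plus further algorithmic/entropy arguments, because a bound of the form $2^{(1-\gO(1))N/4}$ is unattainable there (flipping a single edge of $M^*$ already costs only polynomially). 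Your proposal has no mechanism corresponding to (1), applies the stability input to the wrong object in (2), and its ``tail regime'' target is too weak to handle the near-canonical-but-not-canonical sets that constitute the real difficulty in (3).
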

\nin
(As usual, $a_n \sim b_n$ means $a_n/b_n \rightarrow 1$ as $n \rightarrow \infty$. The original question from \cite{DFR}, answered in \cite{IK}, just asked for the asymptotics of
$\log\mis(Q_n)$.)  

The general context for Theorem~\ref{thm:main} is asymptotic enumeration in the spirit of,
prototypically, Erd\H{o}s, Kleitman and Rothschild \cite{EKR}, who showed that
a.a.\footnote{that is, all but a $o(1)$ fraction as $n\ra\infty$}
triangle-free graphs on $n$ vertices are bipartite.

Here we typically have some collection $\cC$ 
(really, a \emph{sequence} of collections $\cC_n$) and the goal is 
to say that some natural, easily understood subcollection $\cC_n'$ 
accounts for a.a.\ of $\cC_n$.

Within this broad context
Theorem~\ref{thm:main} is closest to 
a short sequence of results beginning nearly
forty years ago with
the asymptotic solution of
Dedekind's Problem by Korshunov \cite{Korshunov} 
(and 
Sapozhenko \cite{Sap1}).
Other results in the sequence give  asymptotics for the number of independent
sets in $Q_n$ (again Korshunov and Sapozhenko \cite{K-S} and Sapozhenko \cite{Sap2})
and the numbers of proper $q$-colorings of $Q_n$ for $q=3,4$,
due respectively to Galvin \cite{G} and the present authors \cite{KPq}.
(Similar ideas appear in work on certain statistical physics models,
e.g.\ in \cite{GK,PS}, to mention just the earliest and most recent instances.)
The reader familiar with the earlier combinatorial results will note the striking
purity of Theorem~\ref{thm:main}, which involves no terms akin to the powers of $e$
in \cite{K-S,Sap2,G,KPq} or the far messier ``extra" terms in the Dedekind asymptotic.

Before proceeding we briefly recall why the r.h.s.\ of \eqref{misqn}
is an (asymptotic) lower bound.
As usual an \textit{induced matching} (IM) is an induced subgraph that is a matching.
It is easy to see that the largest IM's of $Q_n$ are of size $N/4$ and that there are exactly $2n$
of these, here called \emph{canonical matchings} and denoted $M^*$
(see below for a precise description).  Each $M^*$ gives rise to exactly
$2^{N/4}$ MIS's, gotten by choosing one vertex from each edge of $M^*$ and
extending the resulting independent set to the (unique) MIS containing it.
It is also easy to see (an argument is sketched at the end of this section)
that the overlaps between the sets of MIS's gotten from different $M^*$'s are negligible,
and the lower bound follows.
In analogy with the problems mentioned above (beginning with Dedekind's) one
may think of $2n$ "phases," one for each $M^*$.  
(E.g.\ for the simplest of the earlier instances---independent sets, or,
in physics, the \emph{hard-core model}---the vast majority of those sets 
consist almost entirely of vertices of a single parity, and the phases
are "even" and "odd.")

So what Theorem~\ref{thm:main} is really saying
is that the number of MIS's \emph{not} corresponding
to canonical matchings is negligible.  The proof of this goes roughly as follows.
We first (``Step 1"; Lemma~\ref{lem:step1}) show that almost every MIS is
``associated" with some ``large" IM.
Step 2 (Lemma~\ref{lem:step2}) then says that each ``large" IM is close to some $M^*$.
Finally, in Step 3 (Lemma~\ref{lem:step3}), we show that the number of MIS's that are associated with 
an IM close to
some $M^*$ but are not obtained from $M^*$ as above (that is, miss at least one edge of $M^*$) is small.

Before making this sketch concrete we need a few definitions.  (A few more are given in Section~\ref{sec:MD}.)

\noindent \textit{Definitions.}
We use $Q_n$ for the Hamming cube, the graph on $\{0,1\}^n$ with
two vertices (strings) adjacent if they differ in exactly one coordinate.
(We use $v,w,x,y$ for vertices 
and $vw$ or $(v,w)$ for an edge joining $v$ and $w$.)
A \emph{subcube} is $\{x:x_i=y_i~\forall i\in J\}$ for some $J\sub [n]$ and $y\in \{0,1\}^J$.
Until further notice (in Section~\ref{sec:step3}), we 
use $\cE $ and $\Oh$ for the sets of even and odd vertices of $Q_n$
(where the parity of $x$ is the parity of $\sum x_i$).
The string gotten from $x$ by flipping its $i$th coordinate (the neighbor of $x$ in
\emph{direction} $i$) is denoted $x^i$, and we define the \textit{parity} of the edge $xx^i$ to
be the parity of $\sum_{j \ne i} x_j$.

We use $I$ and $M$ for MIS's and IM's (respectively),
$\cI(G)$ for the set of MIS's in $G$, and, in particular, $\cI$ for $\cI(Q_n)$. Write $I\sim M$ if each edge of $M$ meets $I$.
For bookkeeping purposes we
fix a linear order ``$\prec$" on the set of IM's of $G$
and define
$M_G(I)$ to be the first (in $\prec$) of the largest induced matchings $M$ satisfying
\beq{MsimI}
I\sim M
\enq
and
\beq{Rim'(I)}
\nabla(V(M),I\sm V(M))=\0
\enq
(where $\nabla(X,Y)$ is the set of edges between $X$ and $Y$ and $V(M)$ is the set of vertices contained in edges of $M$).
We also set $m_G(I)=|M_G(I)|$
and abbreviate $M_{Q_n}(I)=M(I)$ and 
$m_{Q_n}(I)=m(I)$.

A \textit{canonical matching} of $Q_n$
is the set of edges $vv^i$ of parity $\eps$, for some
$i \in [n]$ and $\eps \in \{0,1\}$.
Canonical matchings are denoted $M^*$.
It is easy to see that (as mentioned earlier)
the maximum size of an IM is $N/4$, and an IM is of this size iff it is canonical.
We set $\cI^*=\{I \in \cI : I \sim M^* \mbox{ for some } M^*\}$.

Throughout the paper $\log$ is $\log_2$ and $\beta=\log(3/2)~(\approx .58)$.

We can now formalize our plan.  Let
\[\cJ =\{I \in \cI: m(I) >(1-\log^3 n/n)N/4\}.
\]
(The $\log^3 n/n$ is not optimal, but it is convenient and we have some room.)

\begin{lem}\label{lem:step1} 
\[ |\cI \setminus \cJ|=o(2^{N/4}).\]
\end{lem}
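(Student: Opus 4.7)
The plan is to reduce Lemma~\ref{lem:step1} to an isoperimetric counting problem by first lower-bounding $m(I)$ via a simple local quantity, and then showing that MIS's which violate this lower bound are rare.

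First I would bound $m(I)$ from below. For $I\in \cI$, write $A=I\cap \cE$ and $B=I\cap \Oh$, and call $v\in A$ \emph{good} if some $w\in \Oh\sm B$ has $N(w)\cap A=\{v\}$ (call such a $w$ a \emph{private neighbor} of $v$). Picking one private neighbor per good $v\in A$ produces an IM $M_A$ satisfying \eqref{MsimI} and \eqref{Rim'(I)}: for two edges $(v,w),(v',w')\in M_A$, parity forces $vv'$ and $ww'$ to be non-edges, and privacy rules out $vw'$ and $v'w$; and for any $u\in I\sm V(M_A)$, if $u\in A$ then $u\sim w$ would contradict privacy of $w$, while if $u\in B$ then $u,w\in \Oh$ are automatically non-adjacent. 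A symmetric construction on the $B$-side yields $M_B$. Hence
\[m(I)\ \geq\ \max(|A_{\mathrm{good}}|,\,|B_{\mathrm{good}}|),\]
so $I\in \cI\sm \cJ$ forces both $|A_{\mathrm{good}}|$ and $|B_{\mathrm{good}}|$ to be strictly less than $(1-\delta)N/4$, where $\delta=\log^3 n/n$.

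Next I would split into two cases. Either (i) $|A|,|B|\leq (1-\delta/2)N/4$, so $|I|\leq (1-\delta/2)N/2$; or (ii) one side, say $A$, satisfies $|A|>(1-\delta/2)N/4$ yet $|A\sm A_{\mathrm{good}}|\geq (\delta/2)N/4$. A \emph{bad} $v\in A\sm A_{\mathrm{good}}$ has $|N(w)\cap A|\geq 2$ for every $w\in N(v)\sm B$, which is a strong local concentration constraint on $A$ around $v$.

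Finally I would count each case. Case~(i) should fall to existing Sapozhenko/Korshunov-style bounds on MIS's of ``small'' size in $Q_n$, since $(1-\delta/2)N/2$ is strictly below the largest canonical size $N/2$. Case~(ii) calls for a container-type argument: approximate $A$ by a small profile encoding its doubly-covered image in $\Oh$, count profiles via vertex-boundary isoperimetric inequalities in $Q_n$ near the middle layers, and bound the number of completions per profile. Each bad vertex contributes roughly an $\Omega(1/n)$ expansion penalty in the exponent, and with $\Omega(\delta N)$ bad vertices the total savings of $2^{-\Omega(N\log^3 n / n^2)}$ should dominate the container overhead, yielding $o(2^{N/4})$. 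The hard part will be case~(ii): the margin above $2^{N/4}$ is thin, so the container precision has to be finely balanced against the expansion gain, and the $\log^3 n / n$ slack in the definition of $\cJ$ is precisely what makes this balance feasible.
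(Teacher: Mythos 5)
Your opening construction is correct and even a little slick: the matching $M_A$ built from private neighbors of $A=I\cap\cE$ is an induced matching satisfying \eqref{MsimI} and \eqref{Rim'(I)}, so indeed $m(I)\ge\max(|A_{\mathrm{good}}|,|B_{\mathrm{good}}|)$, and $I\in\cI\sm\cJ$ forces both quantities below $(1-\delta)N/4$. The argument breaks at the case split. Case (i) rests on a false premise: there are no ``existing Sapozhenko/Korshunov-style bounds'' showing that MIS's of $Q_n$ of size at most $(1-\delta/2)N/2$ number $o(2^{N/4})$, and there cannot be, because almost every MIS of $Q_n$ has size $(1+o(1))N/4$. (For a fixed canonical $M^*$, all but a vanishing fraction of the $2^{N/4}$ choices of one endpoint per edge extend to distinct MIS's of size $N/4+o(N)$; more structurally, an MIS satisfies $B=\Oh\sm N(A)$, so it is determined by $A=I\cap\cE$, and the benchmark size here is $N/4$, not the $N/2$ relevant to ordinary independent sets.) So in case (i) the size of $I$ buys nothing, and you are left with the original problem of converting $m(I)\le(1-\delta)N/4$ into a counting gain. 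Case (ii) is only a gesture: the container argument you invoke, with per-container completion cost driven below $2^{N/4}$ near $|A|\approx N/4$, is essentially Sapozhenko's graph-container machinery (Lemma \ref{lem:GS}), which this paper reserves for the far more delicate Lemma \ref{lem:step3}; your sketch gives no mechanism by which the $\gO(\delta N)$ bad vertices yield a saving that survives the cost of specifying the containers.

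The paper's proof is structurally different and much lighter. It runs the greedy algorithm of Section \ref{sec:alg} on $W=V$: repeatedly take a maximum-degree vertex of the surviving set and delete its closed neighborhood if it lies in $I$ (else delete just the vertex), stopping once $|\supp(\xi)|\ge\frac{\log n}{2n}N$ or the set is empty. The transcript $\xi$ costs only $\exp_2[O(N\log^2n/n)]$, and a short isoperimetric computation (Observations \ref{prop:step1} and \ref{prop:step1(2)}) shows the surviving set $X$ satisfies $|X|<(1+1/n)N/2$. The punchline is Theorem \ref{KPs}, the stability version of Hujter--Tuza from \cite{KPs}, applied to $Q_n[X]$ with $\eps=\Theta(\log^3n/n)$: since $I\cap X$ is an MIS of $Q_n[X]$ whose associated induced matching is still at most $(1-\log^3n/n)N/4$, the number of possibilities for it is $2^{(1-\gO(\log^3n/n))N/4}$, which absorbs the transcript cost. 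Some black box of this stability type, together with the algorithmic reduction to a vertex set of size about $N/2$ (which is what makes Hujter--Tuza and its refinement bite), is what your proposal is missing; without an analogue, the hypothesis on $m(I)$ cannot be cashed in.
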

\nin (The actual bound will be $\log |\cI \setminus \cJ| <(1-\gO(\log^3 n/n))N/4$.)

\begin{lem}\label{lem:step2}
With $\beta  =\log(3/2)$ (as above), if
\beq{Msize}|M| =(1-o(n^{-\beta}))N/4,\enq
then there is an $M^*$ with
\[|M \gD M^*|=o(N)\]
(equivalently, $|M\cap M^*|= (1-o(1))N/4$).
\end{lem}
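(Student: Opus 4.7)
The plan is to decompose $M$ by edge direction, use a cross-direction projection and vertex-isoperimetry argument in $Q_{n-2}$ to argue that a single direction dominates, and then apply stability of the maximum independent set in $Q_{n-1}$ to pin down a specific canonical matching $M^\ast$.

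For each $i\in[n]$ set $M_i=\{e\in M:d(e)=i\}$ and $m_i=|M_i|$, so $|M|=\sum_i m_i$.  Under the projection $\pi_i:Q_n\to Q_{n-1}$ (forget coordinate $i$), each edge of $M_i$ collapses to a single vertex, and the induced-matching property easily forces distinct $M_i$-edges to land at $Q_{n-1}$-distance at least two; hence $\pi_i(V(M_i))$ is an independent set of $Q_{n-1}$ and $m_i\le N/4$.

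The crucial extra input is a cross-direction bound.  For $i\ne j$ let $\pi_{ij}:Q_n\to Q_{n-2}$ and put $A_\ell:=\pi_{ij}(V(M_\ell))$ for $\ell\in\{i,j\}$.  Checking the four pairs $(x,y),(x,y^j),(x^i,y),(x^i,y^j)$ case by case shows $A_i\cap A_j=\emptyset$ and that no $Q_{n-2}$-edge joins $A_i$ to $A_j$.  Hence $C:=V(Q_{n-2})\setminus(A_i\cup A_j)$ is a vertex separator of $Q_{n-2}$ between $A_i$ and $A_j$, giving
\[
m_i+m_j = |A_i|+|A_j| \;\le\; \tfrac{N}{4}-|C|,
\]
where $|C|$ is bounded below via Harper-type vertex isoperimetry in $Q_{n-2}$: for balanced $|A_i|\asymp|A_j|\asymp N$ one has $|C|=\Omega(N/\sqrt n)$, and in general $|C|$ dominates the Harper boundary of the smaller of $A_i,A_j$.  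Since $\beta=\log(3/2)>\tfrac12$, the hypothesis $|M|\ge(1-o(n^{-\beta}))N/4$ is incompatible with a ``balanced'' two-direction split; a finer accounting in the unbalanced regime then forces, for some direction $i^\ast$, $m_{i^\ast}\ge(1-o(1))N/4$ and $\sum_{j\ne i^\ast}m_j=o(N)$.

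With the dominant direction $i^\ast$ in hand, $S:=\pi_{i^\ast}(V(M_{i^\ast}))$ is an independent set of $Q_{n-1}$ of size $(1-o(1))N/4$; the only maximum independent sets of $Q_{n-1}$ are its two parity classes $P_0,P_1$, and standard stability (Kruskal--Katona/Harper in $Q_{n-1}$) yields $|S\triangle P_\varepsilon|=o(N)$ for some $\varepsilon\in\{0,1\}$.  Pulling back through $\pi_{i^\ast}$, $P_\varepsilon$ corresponds to the canonical matching $M^\ast_{i^\ast,\varepsilon}$, and
\[
|M\triangle M^\ast| \;\le\; |M_{i^\ast}\triangle M^\ast| + \sum_{j\ne i^\ast}m_j \;=\; o(N),
\]
as required.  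I expect the main obstacle to be the direction-domination step: turning the pair bound $m_i+m_j\le N/4-|C|$ into a clean single-direction statement uniformly across all imbalance regimes of $(m_i,m_j)$ requires a careful optimization of the Harper-style lower bound on $|C|$, and the specific exponent $\beta=\log(3/2)$ in the hypothesis is presumably calibrated to exactly this optimization---with the $3/2$ factor governing the allowable deficit of $|M|$ from $N/4$.
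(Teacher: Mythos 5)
Your setup computations are fine: the facts that $\pi_i(V(M_i))$ is an independent set of $Q_{n-1}$, that $A_i\cap A_j=\emptyset$, and that no $Q_{n-2}$-edge joins $A_i$ to $A_j$ all check out. The fatal gap is the direction-domination step, and it is not a matter of ``careful optimization'': the pairwise bound $m_i+m_j\le N/4-|C|$ constrains only the sum $m_i+m_j$ for one pair of directions, never the total $|M|=\sum_\ell m_\ell$. If $M$ is spread over many directions (say $m_\ell\approx N/(4n)$ for every $\ell$, or three directions each carrying $\approx N/12$), then every pair sum is far below $N/4$, every separator is permitted to be enormous, and no contradiction with $|M|=(1-o(n^{-\beta}))N/4$ can be extracted. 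Even your ``balanced two-direction'' case is not actually excluded: the deficit $N/4-(m_i+m_j)=\Omega(N/\sqrt n)$ forced by Harper is perfectly compatible with the hypothesis, since the missing mass can simply sit in the other $n-2$ directions. So the proposal never engages with the configurations that make the lemma nontrivial, and the promised ``finer accounting'' has no mechanism behind it.

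The paper's argument is global rather than pairwise. It projects along the two \emph{least used} directions, so that almost every fiber $U_v\cong Q_2$ meets $V(M)$ in a diagonal pair; coloring $v$ red or blue according to which diagonal, the good vertices carry a perfect matching $T$ (pairs of fibers joined by two parallel $M$-edges), and the components of the auxiliary graph $\Gamma=Q_{n-2}[X]-T$ are controlled by the isoperimetric inequality of \cite{KPi} (Theorem~\ref{KPi} via Corollary~\ref{KPi.cor}): every union of components has measure $o(1)$ or $1/2-o(1)$, and Theorem~\ref{KPii} then produces the single dominant direction and near-half-cube, after which a parity/color argument identifies $M^*$. Note also that $\beta=\log(3/2)$ is not calibrated to a Harper-type optimization over imbalance regimes; it is the exponent in the inequality $\int h_A^\beta\,d\mu\ge 2\mu(A)(1-\mu(A))$ of \cite{KPi}, and the hypothesis $o(n^{-\beta})$ is exactly what makes the measure of the bad set small enough for Theorem~\ref{KPii} (the paper remarks that $\beta$ should conjecturally be improvable to $1/2$). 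Your final step---stability for independent sets of size $(1-o(1))N/4$ in $Q_{n-1}$, pulled back to a canonical matching---is fine once a dominant direction is in hand, but producing that dominant direction is the entire difficulty.
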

\nin We believe Lemma \ref{lem:step2} remains true when $n^{-\beta}$ is replaced by $n^{-1/2}$.
This improvement, which is easily seen to be the best one can hope for here,
would follow from Conjecture ~1.10 of \cite{KPi}; see Remark~\ref{see.remark}.

We use Lemma~\ref{lem:step2} to say that each $I$ not covered by Lemma~\ref{lem:step1}
(i.e. $I\in \cJ$) is closely tied to some $M^*$; precisely, for a suitable $\gz=\gz(n)=o(1)$, each $I\in \cJ$ satisfies
\[
\mbox{there is an $M^*$ with $|M(I) \gD M^*| < \gz N.$}
\]
Thus, the following lemma completes the proof of Theorem \ref{thm:main}.

\begin{lem}\label{lem:step3} For any $M^*$,
\beq{IM*}
|\{I\not\in \cI^*: |M(I)\gD M^*|< \gz N \}|= 2^{N/4-\go(n/\log n)}
\enq
\end{lem}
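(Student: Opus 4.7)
My plan is to fix a canonical matching $M^*$ of direction $i^*$ and parity $\eps$ and set $A=V(M^*)$, $B=V(Q_n)\sm A$, so $|A|=|B|=N/2$, $Q_n[A]$ is exactly the matching $M^*$, and $N[A]=V(Q_n)$. For each $I$ in the set of \eqref{IM*}, let $M=M(I)$, $D=M^*\sm M$, $E=M\sm M^*$, $S=I\cap V(M)$, $T=I\sm V(M)$, and $F=\{e\in M^*:e\cap I=\0\}$; note $F\sub D$ because $I\sim M$, and $F\ne\0$ because $I\not\sim M^*$.

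I would encode each such $I$ by the data $(D,E,S)$ and bound the number of encodings. The pair $(D,E)$ contributes $2^{o(N)}$ possibilities once $\gz$ is small enough, since $|D|,|E|<\gz N$. Given $M$, as $Q_n[V(M)]=M$, the set $S$ is an exact transversal of $M$ with at most $2^{|M|}\le 2^{N/4}$ choices, and $T$ is an MIS of the induced subgraph on $V(Q_n)\sm N[V(M)]$, a set of size $o(N)$ (since $N[V(M^*)]=V(Q_n)$ and $|V(M)\gD V(M^*)|\le 2\gz N$), contributing a further $2^{o(N)}$ choices.

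The required saving of $\go(n/\log n)$ bits over the trivial bound $2^{N/4}$ would come from the rigidity forced by $F\ne\0$. For each $e^*=uv\in F$, both $u,v$ are outside $I$; since their only $A$-neighbor is each other, by maximality each of $u,v$ has a witness neighbor in $I\cap B$. A witness $u^j\in I$ (with $j\ne i^*$) forbids all its $Q_n$-neighbors from $I$; in particular each $u^{j,k}\in A$ ($k\ne j,i^*$) is forbidden, so for every $M^*$-edge of the form $\{u^{j,k},u^{j,k,i^*}\}$ that is still in $M$, the partner $u^{j,k,i^*}$ is forced into $S$, killing one binary transversal choice. The two witnesses for $e^*$ together pin down on the order of $2n$ transversal bits. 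Since the per-edge cost of specifying $F$ is only $\log|D|\le\log(\gz N)=n-\log(1/\gz)$, the net saving per missed edge would be $(1-o(1))n+\log(1/\gz)$, which is $\Omega(n)=\go(n/\log n)$ whenever $\gz=o(1)$. The main obstacle is to make this argument fully quantitative — ensuring injectivity of the $(D,E,S)$ encoding, controlling the potentially non-trivial interactions between distinct forced witnesses when $|F|>1$, and managing the various $2^{o(N)}$ error factors — which I expect will require isoperimetric inputs in $Q_n$ in the spirit of the tools emphasized in the abstract.
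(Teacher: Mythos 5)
Your accounting cannot close, and the obstacle is not the technical bookkeeping you defer at the end but the basic shape of the trade-off. The target is a saving of $\go(n/\log n)$ bits below $N/4$ --- barely super-logarithmic in $N$ --- yet you charge $2^{o(N)}$ (really up to $\exp_2[\Theta(\gz N\log(1/\gz))]$) for the pair $(D,E)$ and another $2^{o(N)}$ for $T$. These ``error factors'' amount to $\Theta_\gz(N)$ \emph{bits}, astronomically more than the $\Theta(n|F|)$ bits recovered from forced transversal entries, and nothing ties $|D|,|E|$ to $|F|$: one can have $|F|=1$ while $|M(I)\gD M^*|$ is of order $\gz N$ (e.g.\ when $I$ contains many well-separated vertices outside $V(M^*)$, each of which forces roughly $n$ edges of $M^*$ out of $M(I)$ without creating any missed edge). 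So the per-edge comparison ``save $(1-o(1))n$ versus pay $\log|D|$'' is illusory --- the saving is per edge of $F$ while the cost is per edge of $D\cup E$ plus the cost of $T$ --- and the net exponent is $N/4+\Theta_\gz(N)$ rather than $N/4-\go(n/\log n)$. A back-of-envelope count of the $I\notin\cI^*$ with exactly one missed $M^*$-edge (two witnesses at distance $3$, each forcing about $n$ transversal bits) shows the truth is about $2^{N/4-n+O(\log n)}$, so the lemma is nearly tight and no $o(N)$ slop can be tolerated anywhere.

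The paper's proof is organized precisely so that every cost and every saving is measured against a single quantity: after projecting along the direction of $M^*$ and passing to labelings of $Q_{n-1}$, the relevant structure is the set $A$ of (non-singleton $2$-components of) occupied even vertices, and the yardstick is its isoperimetric defect $t=|N(A)|-|[A]|$. Sapozhenko's container lemma (Lemma~\ref{lem:GS}) specifies this structure at cost only $O(t\log^2 n/\sqrt n)$, and the labeling phase --- run through the algorithm of Section~\ref{sec:alg} together with the stability theorem (Theorem~\ref{KPs}) --- saves $\gO(t_i)$ or $\gO(\eps t')$ with $\eps\gg 1/\log n$; since $t=\gO(g/\sqrt n)$ and $g\geq 2n-2$, the net saving is $\go(n/\log n)$. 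Your instinct that a missed $M^*$-edge forces rigidity nearby is the right starting point (it is essentially \eqref{as3} and \eqref{occ.nbr}), but converting it into a count requires an encoding of the deviant region whose cost is proportional to the rigidity it buys; that is exactly what the container/stability machinery supplies and what the naive $(D,E,S,T)$ encoding does not.
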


\nin
\textit{Outline.} 
Lemmas \ref{lem:step1}, \ref{lem:step2} and \ref{lem:step3} are proved in Sections \ref{sec:step1}, \ref{sec:step2} and \ref{sec:step3} respectively. Section \ref{sec:tools} collects a few further basics and the various earlier results we will use, and Section \ref{sec:alg} treats an algorithm that underlies the proofs of Lemmas \ref{lem:step1} and \ref{lem:step3}.
 While they do require additional ideas, Lemma \ref{lem:step1} is substantially based on Theorem \ref{KPs} below, which was proved in \cite{KPs} (and which will also play a crucial role in the proof of Lemma \ref{lem:step3}), and the main point for Lemma \ref{lem:step2} (Theorem \ref{KPii} below) is from \cite{KPi}
(which was originally motivated by the present application).
So the most important and interesting contribution of the present paper is the proof of Lemma \ref{lem:step3},
which depends especially on Sapozhenko's Lemma \ref{lem:GS}.
It is interesting that Theorem \ref{thm:main} seems to require as much as it does \textit{in addition to} Sapozhenko's ingenious and difficult argument for (ordinary) independent sets in $Q_n$ (the main ideas for which are pretty well represented by Lemma \ref{lem:GS}).

We close this section with the promised lower bound discussion.

\begin{prop}\label{lem:lbd} Let $M_1^*, M_2^*$ be distinct canonical matchings and $\cI^*_j=\{I \in \cI : I \sim M^*_j\}$ for $j=1,2$. Then
\[|\cI^*_1 \cap \cI^*_2| \le 3^{N/8}.\]
\end{prop}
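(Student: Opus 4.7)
My plan is as follows. First I would dispense with the trivial case where $M_1^*$ and $M_2^*$ share a direction $i$: then $M_1^* \cup M_2^*$ is the perfect matching of all direction-$i$ edges in $Q_n$, and each $I \in \cI_1^* \cap \cI_2^*$ selects one endpoint per edge. Encoding $I$ by $f : \{v : v_i = 0\} \to \{0,1\}$, independence of $I$ reduces to $f$ being a proper $2$-coloring of $Q_{n-1}$; there are exactly $2$ such, so $|\cI_1^* \cap \cI_2^*| = 2$ in this case.

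Otherwise I may assume WLOG that $M_1^*$ and $M_2^*$ have directions $1$ and $2$. Partition $V(Q_n)$ into the $N/4$ induced $4$-cycles $C_y := \{v : v_k = y_k \text{ for all } k \ge 3\}$ indexed by $y \in Q_{n-2}$. In each $C_y$, let $a(y), b(y), c(y), d(y)$ denote the vertices covered, respectively, by both $M_1^*$ and $M_2^*$; by $M_1^*$ only; by $M_2^*$ only; and by neither (all uniquely determined). The conditions $I \sim M_1^*$ and $I \sim M_2^*$ force both $I \cap \{a(y),b(y)\}$ and $I \cap \{a(y),c(y)\}$ to be singletons, so $I \cap C_y$ must be one of $\{a(y)\}$, $\{a(y),d(y)\}$, or $\{b(y),c(y)\}$; call these three labels $\alpha$, $\alpha_d$, $\beta$.

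Using the inter-cycle adjacencies (for $y \sim y'$ in $Q_{n-2}$, the only edges between $C_y$ and $C_{y'}$ in $Q_n$ are $a(y)d(y')$, $d(y)a(y')$, $b(y)c(y')$, $c(y)b(y')$) together with the independence and maximality of $I$, I would prove that the labeling is uniquely determined by $B := \{y : \text{label}(y) = \beta\}$: $B$ is independent in $Q_{n-2}$ (adjacent $\beta$-cycles would give $b(y) \sim c(y')$ both in $I$); the $\alpha_d$-cycles are exactly $\{y \notin B : N_{Q_{n-2}}(y) \sub B\}$ (forward: $d(y) \in I$ forces $a(y') \notin I$, hence $y' \in B$, for every neighbor $y'$; backward: in an $\alpha$-cycle, maximality applied to the unchosen vertex $d(y)$ requires some neighbor of $d(y)$ in $I$, and since $b(y), c(y) \notin I$ this must be some $a(y')$ with $y' \notin B$); and the remaining cycles are $\alpha$. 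A direct check shows conversely that every independent set $B$ of $Q_{n-2}$ yields a valid MIS in $\cI_1^* \cap \cI_2^*$, so $|\cI_1^* \cap \cI_2^*| = i(Q_{n-2})$, the number of independent sets of $Q_{n-2}$.

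To finish, note that (for $n \ge 3$) $Q_{n-2}$ has a perfect matching $M_0$---e.g.\ the direction-$1$ edges of $Q_{n-2}$---of size $|V(Q_{n-2})|/2 = N/8$. Any independent set of $Q_{n-2}$ intersects each $e \in M_0$ in at most $3$ ways ($\emptyset$ or either single endpoint), so $i(Q_{n-2}) \le 3^{|M_0|} = 3^{N/8}$, completing the proof. The main obstacle is the structural step above, particularly the backward direction of the characterization of the $\alpha_d$-cycles, which hinges on carefully applying maximality of $I$ to the unchosen vertex $d(y)$ in an $\alpha$-cycle.
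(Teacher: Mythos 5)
Your argument is correct, and it reaches the stated bound by a genuinely different route than the paper. The paper's (sketched) proof partitions $Q_n$ into the $N/8$ copies of $Q_3$ obtained by projecting away the two matching directions \emph{plus one further coordinate}, and observes that the trace of $I$ on $V(M_1^*\cup M_2^*)$ in each copy has at most $3$ possibilities and that these traces determine $I$; the product bound $3^{N/8}$ is then immediate. You instead partition into the $N/4$ induced $4$-cycles given by the two matching directions alone; there the per-cycle count of $3$ would only give $3^{N/4}$, so you must (and do) extract global structure: the labeling $\alpha/\alpha_d/\beta$ is determined by the set $B$ of $\beta$-cycles, $B$ is independent in $Q_{n-2}$, and conversely every independent $B$ arises, so $|\cI^*_1\cap\cI^*_2|=i(Q_{n-2})$ exactly (in the different-direction case), which a perfect matching of $Q_{n-2}$ bounds by $3^{N/8}$. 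Your inter-cycle adjacency claims ($a(y)\sim d(y')$, $b(y)\sim c(y')$, etc.) and the maximality argument forcing $d(y)\in I$ when $N_{Q_{n-2}}(y)\sub B$ both check out. The trade-off: the paper's version is shorter and purely local, adequate since only a crude bound is needed; yours is longer but gives the exact value $i(Q_{n-2})$, which is in fact far smaller than $3^{N/8}$ (asymptotically $2\sqrt{e}\,2^{N/8}$ by Korshunov--Sapozhenko). Both implicitly require $n\ge 3$, which is harmless in this asymptotic setting.
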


\begin{proof}
This is easy and we just give an informal sketch. We may assume $M_1^*$ and $M_2^*$ use different directions, since otherwise $\cI_1^* \cap \cI_2^* =\{\cE, \Oh\}$. We may further assume the two directions are $n-1$ and $n$, and consider the natural projection $\pi:\{0,1\}^{[n]} \rightarrow \{0,1\}^{[n-3]}$; thus the $\pi^{-1}(v)$'s are copies of $Q_3$ partitioning $Q_n$. It is then easy to see that for an $I \in \cI_1^* \cap \cI_2^*$ there are at most three possibilities for each 
$I \cap \pi^{-1}(v)\cap V(M_1^*\cup M_2^*)$ (and that these choices determine $I$),
yielding the bound in the lemma. \end{proof}

\section{Tools}\label{sec:tools}

\subsection{More definitions}\label{sec:MD}
Let $G$ be a graph and $x \in V=V(G)$. As usual, $N_x$ denotes the neighborhood of $x$ and $N(A)=\cup_{x \in A} N_x$. 
For 
$S \subseteq V$, $d_S(x)=|S \cap N_x|$.
For $A \subseteq V$, the \textit{closure} of $A$ is
\[[A]=\{x \in V: N_x \subseteq N(A)\}\]
and $A$ is \textit{closed} if $A=[A]$.

For a positive integer $k$, say 
$A \subseteq V$ is \textit{k-linked} if for any $u,v \in A$, there are vertices $u=u_0, u_1, \ldots, u_l=v$ in $A$ 
such that for each $i \in [l]$, $u_{i-1}$ and $ u_i$ are at distance at most $k$ in $G$. 
The \textit{$k$-components} of $A$ are its maximal $k$-linked subsets. 
(So we use ``component" for a \textit{set of vertices} rather than a subgraph.)
In what follows we will only be interested in $k=2$.

In the rest of the paper we use $V$ for $V(Q_n)$.

For disjoint $A,B \sub V$ and $i \in [n]$, $\nabla (A,B)=\{(x,y) \in E(Q_n) : x \in A, y \in B\}$, $\nabla A = \nabla(A, V \setminus A)$, $\nabla_i(A,B)=\{(x,x^i):x \in A, x^i \in B\}$ and $\nabla_i A= \nabla_i(A, V \setminus A)$.

\subsection{Step 1 supplies}\label{sec:tools1}

(This name is not very accurate, as the main point of the section, Theorem~\ref{KPs},
is crucial for Lemma~\ref{lem:step3} as well as Lemma~\ref{lem:step1}.)

\begin{thm} [Hujter-Tuza \cite{HT}] \label{HT}
For any $m$-vertex, triangle-free graph $G$,
\[\log\mis(G) \le m/2,\]
with equality iff $G$ is a perfect matching.
\end{thm}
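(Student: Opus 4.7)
My plan is to prove Theorem~\ref{HT} by induction on $m = |V(G)|$, split into two regimes by whether $\Delta(G) \ge 3$ or $\Delta(G) \le 2$. Base cases $m \le 2$ are immediate.

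High-degree case: assume $\Delta(G) \ge 3$ and pick $v$ with $d(v) \ge 3$. Every MIS of $G$ containing $v$ is in bijection (via $I \mapsto I \setminus \{v\}$) with an MIS of $G - N[v]$, and every MIS $I$ of $G$ not containing $v$ must contain some neighbor of $v$ by maximality, so is in particular an MIS of $G - v$. Hence
\[\mis(G) \le \mis(G - N[v]) + \mis(G - v).\]
Both subgraphs are triangle-free, so by the IH,
\[\mis(G) \le 2^{(m - d(v) - 1)/2} + 2^{(m-1)/2} \le 2^{(m-4)/2}\bigl(1 + 2^{3/2}\bigr) < 2^{m/2},\]
using $1 + 2^{3/2} < 4$. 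The strict inequality rules out any equality case here.

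Low-degree case: if $\Delta(G) \le 2$, then $G$ is a disjoint union of isolated vertices, paths, and cycles, with triangle-freeness excluding $C_3$. Since $\mis$ factors over components while $m$ adds, it suffices to verify the bound componentwise: $\mis(P_n) \le 2^{n/2}$ with equality iff $n = 2$, and $\mis(C_k) < 2^{k/2}$ for $k \ge 4$, both routine via the recurrence $\mis(P_n) = \mis(P_{n-2}) + \mis(P_{n-3})$ and small checks for short paths and cycles. Equality in the overall bound therefore forces every component to be $P_2$, i.e., $G$ is a perfect matching; the converse is immediate.

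The main obstacle—insofar as there is one—is less a technical hurdle than bookkeeping in the low-degree case, where one must track the equality case across all path and cycle lengths. The core numerical content is the elementary inequality $1 + 2^{3/2} < 4$, which makes the high-degree split work, and triangle-freeness enters substantively only to exclude $C_3$ in the low-degree classification.
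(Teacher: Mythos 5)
The paper does not prove this statement; it is quoted directly from Hujter and Tuza \cite{HT}, so there is no internal proof to compare against. Your argument is a correct, self-contained proof and is essentially the standard one. The two load-bearing steps both check out: (i) the branching inequality $\mis(G)\le \mis(G-N[v])+\mis(G-v)$ is valid, since an MIS containing $v$ restricts (injectively) to an MIS of $G-N[v]$, while an MIS avoiding $v$ is already an MIS of $G-v$ (its maximality in $G-v$ is inherited because no excluded vertex can be dominated only via $v$); and (ii) $1+2^{3/2}<4$ then gives strict inequality whenever some vertex has degree at least $3$, so equality can only occur when $\Delta(G)\le 2$. One small point worth making explicit in the low-degree case: the crude estimate $\mis(C_k)\le \mis(P_{k-3})+\mis(P_{k-1})\le 2^{(k-3)/2}+2^{(k-1)/2}$ does \emph{not} suffice, since $2^{-3/2}+2^{-1/2}>1$; you should instead run the induction directly on the Padovan/Perrin recurrences (for both $P_n$ and $C_k$ one has $f(n)=f(n-2)+f(n-3)$, and $2^{-1}+2^{-3/2}<1$ closes the induction once the small cases are checked by hand). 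With that noted, the equality analysis is complete: strictness in the high-degree case and in every component other than $P_2$ forces $G$ to be a perfect matching. Triangle-freeness enters exactly where you say it does, namely to exclude $C_3$ (for which $\mis=3>2^{3/2}$) and to keep the induction hypothesis applicable to subgraphs.
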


\begin{thm}[\cite{KPs}, Theorem 3.4] \label{KPs}
There is $c>0$ such that for any $\eps$ and $m$-vertex, triangle-free graph $G$,
\[\log |\{I \in \cI(G): m_G(I) < (1-\eps) m/2\}|<(1-c\eps)m/2.\]
\end{thm}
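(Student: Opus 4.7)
My plan is to refine the proof of Hujter--Tuza (Theorem \ref{HT}) via an algorithmic encoding of each MIS $I$ with $m_G(I) < (1-\eps)m/2$, aiming to use fewer than $(1-c\eps)m/2$ bits per $I$. Hujter--Tuza is tight precisely when $G$ is a perfect matching, in which case every MIS has $m_G(I)=m/2$; so small $m_G(I)$ signals that $I$ is far from extremal and should be encodable more efficiently.

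The natural structural framework is as follows. Given $I$, set $M:=M_G(I)$, $T:=I\cap V(M)$, and $J:=I\setminus V(M)$; the triple $(M,T,J)$ determines $I$. One checks, using the $\nabla$ condition in the definition of $M_G(\cdot)$, that $J$ is itself a MIS of the triangle-free graph $G[U']$, where $U':=V\setminus(V(M)\cup N(T))$. Hence Hujter--Tuza on $G[U']$ bounds the choices of $J$ by $2^{|U'|/2}\le 2^{(m-2|M|)/2}$; since $T$ has $2^{|M|}$ choices given $M$, we obtain $2^{m/2}$ choices per $M$. This merely recovers Hujter--Tuza, so the stability must come from encoding $M$ itself cheaply, or from showing $|U'|$ is significantly smaller than $|U|=m-2|M|$.

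To this end, I would process the edges of $G$ in a fixed order and, for each edge $e$, record enough bits to determine whether $e$ lies in $M$ and, if so, which endpoint of $e$ lies in $I$. A \emph{good step} adds $e$ to the running IM (1 bit, resolves two vertices). A \emph{blocked step} occurs when $e$ cannot be added---either $e$ has no endpoint in $I$, or the IM or $\nabla$ conditions would fail; such a step must use a fallback, but it also carries structural information about $I$: e.g., both endpoints of $e$ lying in $V\setminus I$ forces each to have an $I$-witness among its \emph{other} neighbors, sharply constraining $J$. Since $|M_G(I)|<(1-\eps)m/2$, at least $\eps m/2$ of the processed edges are blocked, and each blocked step should yield $\Omega(1)$ bits of savings versus the standard case.

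The main obstacle is the amortization. Different blocked steps may yield overlapping structural constraints (through shared vertices or neighborhoods), so their savings do not sum cleanly; nor is it obvious that the worst-case savings from an individual blocked step is a full bit. I expect the resolution to require a potential-function argument tracking the residual ``degrees of freedom'' in the eventual encoding of $J$, with the potential decreasing more sharply at blocked steps than at good ones, and then invoking Hujter--Tuza on an appropriately pruned residual graph to close out. Turning the qualitative observation that ``blocked edges carry information'' into the quantitative $(1-c\eps)m/2$ bound is the core technical difficulty, and is presumably where the bulk of the work in \cite{KPs} lies.
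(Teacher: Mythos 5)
You are asked to prove a statement that the paper itself does not prove: Theorem \ref{KPs} is imported verbatim from \cite{KPs} (their Theorem 3.4), so there is no in-paper argument to compare against and your proposal must stand on its own. Your structural setup is sound as far as it goes: the decomposition $I=(M,T,J)$ with $M=M_G(I)$, $T=I\cap V(M)$, $J=I\sm V(M)$ is correct, the verification that $J$ is an MIS of $G[U']$ (using \eqref{Rim'(I)} and maximality of $I$) works, and your diagnosis that the naive count only gives $2^{m/2}$ \emph{per choice of} $M$ --- so that everything hinges on encoding $M$ cheaply or shrinking $U'$ --- is the right one.

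However, the proposal stops precisely where the theorem begins. The entire content of the statement is the quantitative claim that the deficiency $m/2-m_G(I)\ge \eps m/2$ can be converted into a savings of $c\eps m/2$ bits, and this is exactly what you do not supply: you never specify the encoding (what is recorded at a blocked step, how the decoder reconstructs $I$ from the record, why the total record length is at most $(1-c\eps)m/2$), and you explicitly defer the amortization to a hoped-for potential function, conceding that this ``is presumably where the bulk of the work in \cite{KPs} lies.'' Two of the intermediate assertions are also unjustified as stated. First, ``at least $\eps m/2$ of the processed edges are blocked'' does not follow from $|M_G(I)|<(1-\eps)m/2$ without knowing the algorithm: an edge can fail to enter the running IM merely because an adjacent edge already did, which carries no information about $I$ being far from extremal, and the greedy IM your procedure builds need not be comparable in size to $M_G(I)$ (which is the \emph{largest} IM satisfying \eqref{MsimI} and \eqref{Rim'(I)}). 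Second, ``each blocked step should yield $\gO(1)$ bits of savings'' is the crux and is asserted, not argued; the overlap problem you flag (shared witnesses among many blocked edges, e.g.\ in a star) is a real obstruction, not a technicality. As it stands this is a plausible research plan, not a proof.
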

\nin
In particular, with $\im(G)$ denoting the size of a largest induced matching in $G$, $\log \mis(G)> (1-\eps)m/2$ implies $\im(G)>(1-O(\eps))m/2$; 
this is Theorem~1.4 of \cite{KPs}, a ``stability" version of Theorem \ref{HT}.

In what follows we will mainly be concerned with $I\in \cI$ (recall this is $\{\mbox{MIS's of $Q_n$}\}$) having $m(I)\approx N/4$,
for which the next little point will be helpful.
\begin{obs}\label{rimObs}
If $|M(I)|> (1-\eps)N/4$, then $|I\sm V(M(I))|< \eps N$.
\end{obs}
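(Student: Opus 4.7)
The strategy is to set $M = M(I)$ and $T = I \sm V(M)$, and to prove the bound via two observations: (i) $T \sub W$, where $W := V \sm (V(M) \cup N(V(M)))$; and (ii) $|W| < \eps N$. Step (i) is immediate from \eqref{Rim'(I)}, which says that no vertex of $T$ has a neighbor in $V(M)$; combined with $T \cap V(M) = \0$ this gives $T \sub W$.

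Step (ii) rests on the following sub-claim: every $y \in V \sm V(M)$ has at most $n-1$ neighbors in $V(M)$. Suppose for contradiction that all $n$ neighbors $y^i$ of $y$ lie in $V(M)$. For each $i$ the $M$-partner of $y^i$ is a neighbor of $y^i$ other than $y$ (since $y \notin V(M)$), hence of the form $(y^i)^{k(i)}$ for some $k(i) \neq i$. For the matching to be induced we need $y^i \not\sim (y^{i'})^{k(i')}$ for every $i \neq i'$; but these two vertices are adjacent exactly when $k(i') = i$. So $k(i') \neq i$ must hold for every $i \neq i'$, which together with $k(i') \neq i'$ forces $k(i') \notin [n]$ --- a contradiction.

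Given the sub-claim, set $B := N(V(M)) \sm V(M)$. Since $M$ is induced, each $v \in V(M)$ has exactly $n-1$ neighbors outside $V(M)$, so
\[
\sum_{y \in B} d_{V(M)}(y) = 2|M|(n-1) \leq (n-1)|B|,
\]
giving $|B| \geq 2|M|$. Hence
\[
|T| \leq |W| = N - 2|M| - |B| \leq N - 4|M| < N - (1-\eps)N = \eps N.
\]
The one real obstacle is the sub-claim $d_{V(M)}(y) \leq n-1$, which is the only place the induced-matching structure really gets used; the rest is arithmetic, which is why the statement fits here as a short Observation rather than a Lemma.
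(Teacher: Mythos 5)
Your proof is correct and is essentially the paper's argument: both bound $|I\sm V(M)|$ by $|V\sm(V(M)\cup N(V(M)))|$ via \eqref{Rim'(I)} and then show $|N(V(M))\sm V(M)|\ge 2|M|$ by double-counting $\nabla(V(M),\,N(V(M))\sm V(M))$. The only difference is that you explicitly verify the step $d_{V(M)}(y)\le n-1$ for $y\notin V(M)$, which the paper uses without comment.
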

\begin{proof}
With $M=M(I)$, $W=V(M)$ and $Z=N(W)\sm W$, we have
$I\cap Z=\0$ (by definition of $M(I)$) and
\[
(n-1)|W| ~=~|\nabla(W,Z)| ~\leq ~(n-1)|Z|,
\]
implying $|Z|\geq |W|$ and
\[
|I\sm V(M)|~\leq~ |V\sm (W\cup Z)| ~< ~\eps N.
\]
\end{proof}

\subsection{Step 2 supplies}

For $A \subseteq V$, define $h_A:V \rightarrow \mathbb N$ by
\[ h_A(x) = \begin{cases} d_{V \setminus A}(x) &\mbox{ if } x \in A, \\ 0 &\mbox{ if } x \notin A. \end{cases} \] 
For $f:V \rightarrow \mathbb N$ and a probability measure $\nu$ on $V$,
\[ \int f d \nu := \sum_{x \in V} f(x)\nu(x).\]
In the next three results, the second and third of which are derived from the first in \cite{KPi}, 
$\mu$ is uniform measure on $V$.

\begin{thm} [\cite{KPi}, Theorem 1.1] \label{KPi} For any $A \subseteq V$,
\[ \int h_A^\beta d\mu \ge 2 \mu(A)(1-\mu(A)).\]
\end{thm}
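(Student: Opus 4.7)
The inequality is tight at codimension-1 and codimension-2 subcubes: if $A$ is a codim-$k$ subcube then $h_A\equiv k$ on $A$ and $\mu(A)=2^{-k}$, so the inequality becomes $k^\beta\ge 2(1-2^{-k})$, with equality at $k=1$ (both sides equal $1$) and at $k=2$ (reducing to $2^\beta=3/2$, i.e.\ $\beta=\log_2(3/2)$), and strict for $k\ge 3$. These extremals fix $\beta$ and point to an induction on $n$, with the base case $n=1$ a finite check.

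\textbf{Reduction to down-sets.} For each direction $i\in[n]$ the down-compression $C_i$ (in every 2-cube $\{(y,0),(y,1)\}$ in direction $i$ containing exactly one vertex of $A$, push that vertex to the 0-side) weakly decreases $\int h_A^\beta\,d\mu$ while preserving $|A|$. Splitting $A$ by direction $i$ into $A_0,A_1\subseteq\{0,1\}^{n-1}$, $C_i$ replaces $(A_0,A_1)$ by $(A_0\cup A_1,\,A_0\cap A_1)$. For $y\in A_0\cap A_1$ the pair $(h_{A_0\cup A_1}(y),h_{A_0\cap A_1}(y))$ has the same sum as $(h_{A_0}(y),h_{A_1}(y))$ (inclusion-exclusion for neighbor counts) but majorizes it, so concavity of $x^\beta$ drops the sum of $\beta$-powers; for $y\in A_0\triangle A_1$ the relevant $h$-value also only decreases. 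Iterating over all $i$ reduces to the case that $A$ is a down-set.

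\textbf{Inductive step.} For a down-set $A$, split by the last coordinate so that $A_1\subseteq A_0$; set $C=A_0\setminus A_1$. Since $h_A((y,0))=h_{A_0}(y)+\mathbf{1}_{y\in C}$ and $h_A((y,1))=h_{A_1}(y)$, bookkeeping gives
\[
2^n \int h_A^\beta\,d\mu \;=\; L_0 + L_1 + \sum_{y\in C}\bigl[(h_{A_0}(y)+1)^\beta - h_{A_0}(y)^\beta\bigr],
\]
with $L_\epsilon:=\sum_{y\in A_\epsilon}h_{A_\epsilon}(y)^\beta$. Applying the inductive hypothesis $L_\epsilon\ge 2^n p_\epsilon(1-p_\epsilon)$ (with $p_\epsilon=|A_\epsilon|/2^{n-1}$ and $p=(p_0+p_1)/2$) together with the algebraic identity $2p(1-p)=p_0(1-p_0)+p_1(1-p_1)+\tfrac12(p_0-p_1)^2$ reduces the induction to the auxiliary inequality
\[
\sum_{y\in C}\bigl[(h_{A_0}(y)+1)^\beta - h_{A_0}(y)^\beta\bigr] \;\ge\; \frac{|C|^2}{2^{n-1}}.
\]

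\textbf{Main obstacle.} The auxiliary inequality is the crux. Pointwise bounds on $\delta_\beta(h):=(h+1)^\beta-h^\beta$, which can be as small as $n^\beta-(n-1)^\beta\approx\beta n^{\beta-1}$, do not suffice: small down-set examples (e.g.\ $A_0=\{0,1\}^{n-1}\setminus\{\mathbf{1}\}$ with $A_1=\emptyset$) already violate the auxiliary inequality outright, so the slack in the inductive hypothesis for $A_0$ must be harvested explicitly. I would attempt to close the gap either (i) by strengthening the inductive hypothesis with a non-negative surplus $R(A)$ chosen so that the induction supplies the missing $|C|^2/2^{n-1}$ at each stage, or (ii) by a direct edge-isoperimetric argument for the down-set $A_0$ showing that vertices of $C$ with small $h_{A_0}$ predominate in its outer shell---consistent with the equalities $\delta_\beta(0)=1$ and $\delta_\beta(1)=1/2$ being precisely the per-vertex contributions at the codim-1 and codim-2 subcube extremals. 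Finding the correct $R$, or extracting this shell statement for down-sets, is where I expect the real work to lie.
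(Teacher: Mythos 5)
This theorem is not proved in the present paper at all; it is imported verbatim from \cite{KPi} (where it is the main result), so there is no internal proof to compare against. Judged on its own terms, your proposal is a plan with a genuine, and indeed self-acknowledged, hole at the decisive point. The preliminary parts are sound: the down-compression step is correct (the inclusion--exclusion/majorization argument for $y\in A_0\cap A_1$ and the monotonicity for $y\in A_0\triangle A_1$ both check out), and the bookkeeping in the inductive step, including the identity $2p(1-p)=p_0(1-p_0)+p_1(1-p_1)+\tfrac12(p_0-p_1)^2$, correctly reduces the naive induction to the auxiliary inequality $\sum_{y\in C}\bigl[(h_{A_0}(y)+1)^\beta-h_{A_0}(y)^\beta\bigr]\ge |C|^2/2^{n-1}$. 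But, as you yourself observe, that inequality is simply false (your example $A_0=\{0,1\}^{n-1}\setminus\{\mathbf 1\}$, $A_1=\emptyset$ already kills it: the left side is about $2^{n-1}-(n+1)/2$ against a right side of about $2^{n-1}-2$), so the induction as set up cannot close. The theorem survives in such examples only because the inductive hypothesis applied to $A_0$ is far from tight there, which is exactly the surplus your options (i) and (ii) would need to capture --- and neither a strengthened hypothesis $R(A)$ nor the shell statement for down-sets is actually formulated or proved. Since the entire difficulty of the theorem (the reason it occupies a separate paper) lives precisely in this step, what you have is an honest reduction to the hard part, not a proof; to complete it you would need to either locate a working surplus functional or follow \cite{KPi}, whose argument does substantially more than split the cube and invoke concavity.
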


\begin{cor} [\cite{KPi}, Corollary 3.2] \label{KPi.cor}
If $R \cup S \cup U$ is a partition of $V$ with $\mu(R \cup U)=\alpha$, then
\[\int_R h_{R \cup U} d\mu \ge 2\alpha(1-\alpha)-n^\beta \mu(U).\]
\end{cor}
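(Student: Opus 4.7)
The plan is to deduce Corollary~\ref{KPi.cor} directly from Theorem~\ref{KPi} applied to $A = R \cup U$. First I note the basic pointwise facts about $h_A$: it is supported on $A$, it is a non-negative integer bounded by $n$, and in particular $h_A(x) \le n$ everywhere. Since $\beta < 1$, this gives two comparisons: $h_A^\beta(x) \le h_A(x)$ pointwise (trivially at $h_A = 0$, and otherwise $h_A \ge 1$ forces $h_A^\beta \le h_A$), and $h_A^\beta(x) \le n^\beta$ everywhere.

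With $A = R \cup U$, Theorem~\ref{KPi} gives
\[
\int h_A^\beta\,d\mu \;\ge\; 2\mu(A)(1-\mu(A)) \;=\; 2\alpha(1-\alpha).
\]
Since $h_A$ vanishes off $A = R\cup U$, I split the integral as
\[
\int h_A^\beta\,d\mu \;=\; \int_R h_A^\beta\,d\mu + \int_U h_A^\beta\,d\mu.
\]
The second piece is at most $n^\beta \mu(U)$ by the uniform bound $h_A^\beta \le n^\beta$, and the first piece is at most $\int_R h_A\,d\mu$ by the pointwise comparison $h_A^\beta \le h_A$. Combining,
\[
\int_R h_A\,d\mu \;\ge\; \int_R h_A^\beta\,d\mu \;\ge\; 2\alpha(1-\alpha) - n^\beta \mu(U),
\]
which is exactly the claim (noting that $h_A = h_{R\cup U}$).

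There is no real obstacle here; the corollary is a one-line consequence of Theorem~\ref{KPi} once one observes that removing the exponent $\beta$ only helps on $R$ (since $h_A$ is integer-valued), and that the cost on $U$ of discarding that piece is controlled by the trivial bound $h_A \le n$. In effect, the corollary is just a rearrangement of Theorem~\ref{KPi} designed to isolate the $R$-contribution, paying the additive price $n^\beta\mu(U)$ for the part of $A$ we throw away.
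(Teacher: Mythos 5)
Your proof is correct and is exactly the intended derivation: the paper only cites this corollary from \cite{KPi}, noting it is "derived from" Theorem~\ref{KPi}, and your argument (apply the theorem to $A=R\cup U$, use $h_A^\beta\le h_A$ on $R$ since $h_A$ is a non-negative integer and $\beta<1$, and $h_A^\beta\le n^\beta$ on $U$) is that derivation. No issues.
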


\begin{thm} [\cite{KPi}, Theorem 1.9] \label{KPii}

Suppose $A \cup B \cup W$ is a partition of $V$ with $\mu(A)=(1\pm \eps)/2$, $\mu(W) \le \eps n^{-\beta}$ and
\[ |\nabla(A,B)|<(1+\eps)2^{n-1}.\]
Then there is $i \in [n]$ such that
\[|\nabla_i A|=(1-O(\eps))2^{n-1}.\]
Furthermore, there is a subcube $C$ 
(of dimension $n-1$)
such that
\[\mu(C \Delta A)=O(\eps).\] 
\end{thm}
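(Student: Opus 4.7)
The plan is to proceed in three steps: first pin down $|\nabla(A,B)|$ via Corollary~\ref{KPi.cor}; then derive a per-direction dichotomy; finally, extract a unique ``unbalanced'' direction $i^*$ together with the approximating sub-cube. For Step~1, I apply Corollary~\ref{KPi.cor} with $R=A$, $S=B$, $U=W$ and $\alpha=\mu(A\cup W)=1/2\pm O(\eps)$: since $h_{A\cup W}(x)=d_B(x)$ for $x\in A$ and $n^\beta\mu(W)\le\eps$,
\[
|\nabla(A,B)|/N \;=\; \int_A h_{A\cup W}\,d\mu \;\ge\; 2\alpha(1-\alpha) - n^\beta\mu(W) \;\ge\; \tfrac{1}{2}-O(\eps),
\]
so together with the hypothesis this gives $|\nabla(A,B)| = (1\pm O(\eps))\,2^{n-1}$.

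For Step~2, I fix $i\in[n]$ and, for $b\in\{0,1\}$, set $a_{i,b}=2\mu(A\cap C_i^b)$ with $C_i^b=\{x:x_i=b\}$, so $a_{i,0}+a_{i,1}=1\pm O(\eps)$. Applying Corollary~\ref{KPi.cor} inside each $(n{-}1)$-dimensional sub-cube $C_i^b$ and summing over $b$ yields, after a short calculation,
\[
(|\nabla(A,B)|-|\nabla_i(A,B)|)/2^{n-1} \;\ge\; 1 - (a_{i,0}-a_{i,1})^2 - O(\eps),
\]
so with Step~1, $|\nabla_i(A,B)|/2^{n-1} \le (a_{i,0}-a_{i,1})^2 + O(\eps)$. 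Separately, the trivial size-difference bound $|\nabla_i A|\ge \bigl||A\cap C_i^0|-|A\cap C_i^1|\bigr|$ combined with $|\nabla_i(A,W)|\le|W|$ gives $|\nabla_i(A,B)|/2^{n-1} \ge |a_{i,0}-a_{i,1}| - O(\eps)$. Writing $t_i=|a_{i,0}-a_{i,1}|$, these two force $t_i(1-t_i)\le O(\eps)$, so every $i$ is either \emph{balanced} ($t_i\le O(\eps)$) or \emph{unbalanced} ($t_i\ge 1-O(\eps)$).

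For Step~3, if two directions $i_1\ne i_2$ were both unbalanced, say with $a_{i_j,b_j}\ge 1-O(\eps)$, then $A$ would be $O(\eps)$-close to both $C_{i_1}^{b_1}$ and $C_{i_2}^{b_2}$, contradicting $\mu(C_{i_1}^{b_1}\Delta C_{i_2}^{b_2})=1/2$; so at most one direction is unbalanced. Summing the Step~2 upper bound and using Step~1 gives $\sum_i (a_{i,0}-a_{i,1})^2 \ge 1 - O(n\eps)$, which rules out ``all balanced'' once $\eps$ is sufficiently small and hence exhibits a unique unbalanced direction $i^*$. Taking $b\in\{0,1\}$ with $a_{i^*,b}\ge 1-O(\eps)$ and $C=\{x:x_{i^*}=b\}$, we have $|\nabla_{i^*}A|\ge|\nabla_{i^*}(A,B)|\ge(1-O(\eps))\,2^{n-1}$ and $\mu(A\Delta C) = \tfrac12 a_{i^*,\bar b}+\tfrac12(1-a_{i^*,b}) = O(\eps)$.

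The main obstacle I anticipate is the existence of an unbalanced direction in Step~3: the naive summing argument loses a factor of $n$, so it rules out ``all balanced'' only when $\eps\ll 1/n$. To reach the full range $\eps = o(n^{-\beta})$ relevant to Lemma~\ref{lem:step2}, one must tighten the per-direction upper bound for $|\nabla_i(A,B)|$ --- presumably by invoking the full $\beta$-exponent inequality Theorem~\ref{KPi} inside each sub-cube in place of its corollary, and exploiting the $2^\beta=3/2$ gap to save the missing factor of $n^{1-\beta}$. The exponent $\beta=\log(3/2)$ itself enters the plan through the condition $n^\beta\mu(W)\le\eps$, which keeps the $W$-contamination at $O(\eps)$ through the sub-cube-wise applications of Corollary~\ref{KPi.cor}.
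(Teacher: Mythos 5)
Theorem~\ref{KPii} is not proved in this paper: it is imported verbatim from \cite{KPi} (Theorem 1.9 there), so there is no in-paper argument to compare yours against, and your proposal has to stand on its own. Its first two steps do: applying Corollary~\ref{KPi.cor} once globally and then inside the two halfcubes $\{x:x_i=b\}$, and playing the resulting upper bound $|\nabla_i(A,B)|\le (t_i^2+O(\eps))2^{n-1}$ against the elementary lower bound $|\nabla_i(A,B)|\ge (t_i-O(\eps))2^{n-1}$, correctly yields the per-direction dichotomy $t_i\le O(\eps)$ or $t_i\ge 1-O(\eps)$; the uniqueness argument and the derivation of the two conclusions from an unbalanced direction are also fine.

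The genuine gap is exactly the one you flag: producing an unbalanced direction at all. Your double count gives $\sum_i t_i^2\ge 1-O(n\eps)$ against $\sum_i t_i^2\le O(n\eps^2)$ in the all-balanced case, which is a contradiction only when $\eps\ll 1/n$. But the theorem must hold for every $\eps=o(1)$ (indeed this is the whole point of working with the exponent $\beta<1$): in the application to Lemma~\ref{lem:step2}, the admissible $\eps$ is essentially $n^{\beta}\mu(W)$ plus the deviation of $\mu(Z)$ from $1/2$, and Observation~\ref{obs:W} together with \eqref{Msize} only makes these $o(1)$ --- e.g.\ $n^{\beta}\mu(W)$ can be of order $n^{\beta-1}\gg 1/n$. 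So in the range that actually matters your argument establishes nothing, and the closing suggestion (``invoke Theorem~\ref{KPi} inside each subcube and exploit the $2^{\beta}=3/2$ gap'') is a restatement of the difficulty rather than a resolution: extracting a single dominant direction from only an $o(1)$ slack is precisely the content of \cite{KPi}, Theorem 1.9, whose proof is a separate stability analysis of the near-equality cases of Theorem~\ref{KPi} and is not recoverable from the per-direction double count alone. As written, the proposal proves a weaker statement (valid only for $\eps=O(1/n)$) that does not suffice for this paper.
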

\nin
(As usual $a \pm b$ denotes a quantity from $(a-b,a+b)$.)

\begin{remark}\label{see.remark}
Conjecture 1.10 of \cite{KPi} says Theorem \ref{KPii} remains true if we replace $\beta$ by $1/2$; this would imply the
strengthening of Lemma \ref{lem:step2} mentioned earlier.
\end{remark}

\subsection{Step 3 supplies}
Recall that a \textit{composition} of $m$ is a sequence $(a_1,\ldots,a_s)$ of positive integers summing to $m$ (the $a_i$'s are the \textit{parts} of the composition), and that:

\begin{prop}\label{prop:comp} 
The number of compositions of $m$ is $2^{m-1}$ and the number with at most $b \le m/2$ parts is $\sum_{i\le b}{m-1 \choose i} < \exp_2[b\log(em/b)]$.
\end{prop}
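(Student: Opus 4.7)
The plan is standard and entirely combinatorial; I would not expect any obstacle here, since the claim is a textbook computation invoked only to organize bookkeeping later on. For the first assertion, I would use the classical bijection between compositions of $m$ and subsets of $\{1,\ldots,m-1\}$: writing $m=1+1+\cdots+1$ (with $m-1$ plus signs) and marking each plus sign as either a ``cut'' or a ``merge'' produces a composition whose parts are the resulting blocks of consecutive $1$'s. The number of cuts is one less than the number of parts, so compositions into exactly $s$ parts biject with $(s-1)$-element subsets of $\{1,\ldots,m-1\}$. Summing $\binom{m-1}{s-1}$ over all $s\in[m]$ yields $2^{m-1}$ compositions total; summing over $s\le b$ gives
\[
\sum_{s=1}^{b}\binom{m-1}{s-1}=\sum_{i=0}^{b-1}\binom{m-1}{i}\le \sum_{i\le b}\binom{m-1}{i},
\]
which is the middle quantity in the proposition.

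For the final inequality, I would invoke the standard partial-binomial-sum estimate $\sum_{i\le b}\binom{n}{i}\le (en/b)^b$ valid for $b\le n/2$, proved by the one-line calculation
\[
(b/n)^b \sum_{i\le b}\binom{n}{i} \;\le\; \sum_{i\le b}\binom{n}{i}(b/n)^i \;\le\; (1+b/n)^n \;\le\; e^b,
\]
using that $(b/n)^i\ge (b/n)^b$ for $i\le b$ (since $b\le n$) and $(1+x)^n\le e^{nx}$. Rearranging gives $\sum_{i\le b}\binom{n}{i}\le (en/b)^b$. Applying this with $n=m-1<m$ and taking $\log_2$ yields the stated bound $\exp_2\!\left[b\log(em/b)\right]$. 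No step requires anything beyond routine manipulation.
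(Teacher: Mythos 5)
Your proof is correct and is the standard argument; the paper states this proposition without proof as a recalled classical fact, so there is nothing to compare against. (One small point you handled well: the exact count with at most $b$ parts is $\sum_{i\le b-1}\binom{m-1}{i}$, and your chain of inequalities correctly passes through the slightly larger sum appearing in the statement; also note your one-line estimate only needs $b\le n$, so the hypothesis $b\le m/2$ is more than enough.)
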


We use the next proposition in bounding the numbers of certain types of 2-linked sets in $Q_n$. It follows from the fact (see e.g. \cite[p.\ 396, Ex.11]{Knuth}) that the
infinite $\Delta$-branching rooted tree contains precisely
\[\frac{{{\Delta n} \choose n}}{(\Delta-1)n+1} \le (e\Delta)^{n-1}\]
rooted subtrees with $n$ vertices.

\begin{prop} [\cite{G}, Lemma 1.6] \label{prop:setcost} 
For each fixed $k$, the number of $k$-linked subsets of $V$ of size $x$ containing some specified vertex is at most $2^{O(x\log n)}$.
\end{prop}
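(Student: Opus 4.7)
The plan is to overcount each $k$-linked set by a canonical rooted spanning tree and then invoke the bound $(e\Delta)^{x-1}$ on the number of $x$-vertex rooted subtrees of the infinite $\Delta$-branching tree quoted just before the statement. First I would introduce the auxiliary graph $G^{(k)}$ on vertex set $V$ in which two vertices are adjacent iff their Hamming distance is at most $k$; by definition, a $k$-linked subset of $V$ is exactly (the vertex set of) a connected subgraph of $G^{(k)}$. Since $k$ is fixed, the maximum degree of $G^{(k)}$ is at most $\Delta := \sum_{i=1}^{k}\binom{n}{i} = O(n^{k})$.

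Next, fix once and for all a linear order on $V$. To each $k$-linked set $A$ of size $x$ containing the specified vertex $v_0$, associate the rooted spanning tree $T(A)$ of $G^{(k)}[A]$ produced by BFS from $v_0$ with ties broken by the fixed order. Distinct $A$'s yield distinct $T(A)$'s (since $V(T(A))=A$), so it suffices to bound the number of such rooted trees. Each $T(A)$ is a rooted tree on $x$ vertices in which every node has at most $\Delta$ children in $G^{(k)}$; using the fixed order on $V$ to label the children of each node by distinct elements of $[\Delta]$ yields an injection from the $T(A)$'s into the set of $x$-vertex rooted subtrees of the infinite $\Delta$-branching rooted tree.

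Applying the quoted bound then gives at most $(e\Delta)^{x-1} = (O(n^{k}))^{x-1} = 2^{O(x\log n)}$, as claimed (with the implicit constant depending on $k$). No real obstacle is anticipated: this is a standard tree-encoding trick, and the only care needed is to make the BFS rule and the child-labelling rule canonical, so that distinct $k$-linked sets are guaranteed to map to distinct rooted abstract subtrees.
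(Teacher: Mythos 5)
Your proposal is correct and is essentially the argument the paper has in mind: the paper quotes the result from Galvin and points to exactly the same tree-counting fact (the $(e\Delta)^{n-1}$ bound on rooted subtrees of the infinite $\Delta$-branching tree), which combined with the auxiliary distance-$\le k$ graph of maximum degree $O(n^k)$ gives the stated bound. Your BFS/canonical-labelling injection is the standard way to make that step precise, so there is nothing to add.
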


The next two results are standard(ish) isoperimetric inequalities for $Q_n$; see e.g. \cite[Lemma 1.3]{K-S} or \cite[Claim 2.5]{GS} for the first and \cite[Lemma 3.4]{KPq} for the second.

\begin{prop} \label{prop:isop}
For $A \subseteq \cE$ with $|A| \le N/4$,
\[\frac{|N(A)|-|A|}{|N(A)|}=\gO(1/\sqrt n).\]
\end{prop}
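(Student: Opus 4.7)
\textbf{Proof plan for Proposition~\ref{prop:isop}.}
The plan is to reduce to an extremal configuration via compression and then evaluate the isoperimetric ratio by an explicit binomial identity.

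First, I would invoke a bipartite version of Harper's vertex-isoperimetric theorem for $Q_n$: among all $A \sub \cE$ of a given size, $|N(A)|$ is minimized when $A$ is an initial segment of $\cE$ in the simplicial order, i.e., the intersection of $\cE$ with a Hamming ball of some radius $r$ about the all-zeros vertex, possibly together with a colex-initial portion of the outermost even level. This is established by a standard parity-preserving coordinatewise compression argument, essentially as in the lemmas cited just before the proposition (e.g.\ \cite[Lemma 1.3]{K-S}, \cite[Claim 2.5]{GS}).

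Second, for such an extremal $A$ -- say $A=\{x\in\cE:\sum_i x_i\le r\}$ with $r$ even -- a direct calculation using the identity $\sum_{k=0}^m(-1)^k\binom{n}{k}=(-1)^m\binom{n-1}{m}$ yields
\[
|N(A)|-|A| \;=\; \sum_{\text{odd }k\le r+1}\binom{n}{k}\,-\,\sum_{\text{even }k\le r}\binom{n}{k} \;=\; \binom{n-1}{r+1}.
\]
(The cases of $r$ odd, or of $A$ with a partial outer level, give analogous expressions of the same order.) The hypothesis $|A|\le N/4$ forces $r$ to be at most about $n/2$ in the extremal case; in the tight regime $r\approx n/2$, Stirling gives $\binom{n-1}{r+1}=\gO(N/\sqrt n)$, while for smaller $r$ the difference $|N(A)|-|A|$ is a much larger fraction of $|A|$. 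Combined with the trivial $|N(A)|\le N/2$, this yields
\[
\frac{|N(A)|-|A|}{|N(A)|} \;\ge\; \frac{\binom{n-1}{r+1}}{N/2}\;=\;\gO(1/\sqrt n).
\]

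The main obstacle is the compression step: the classical Harper theorem concerns $|N[A]|$ for arbitrary $A\sub V$, whereas here we need $|N(A)|$ for $A\sub\cE$, so one must compress while staying within $\cE$ (e.g.\ by shifting along pairs of coordinates, or by compressing each parity class separately). Once the bipartite Harper extremality is available, the remainder is the binomial identity above together with the well-known $\binom{n}{\lfloor n/2\rfloor}\asymp N/\sqrt n$ estimate. For comparison, a purely spectral approach using the second singular value $n-2$ of the bipartite adjacency matrix yields only a $\gO(1/n)$ lower bound on $|N(A)|/|A|-1$, which is insufficient for the $\gO(1/\sqrt n)$ claim -- so some combinatorial input beyond expander-mixing is really needed.
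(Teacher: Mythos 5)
The paper does not actually prove Proposition~\ref{prop:isop}; it quotes it from \cite{K-S} and \cite{GS}. Your plan is essentially the standard argument behind those references: reduce to the extremal configuration ``even part of a Hamming ball'' and compute. Your computation in the extremal case is right: for $A=\{x\in\cE:\sum x_i\le r\}$ with $r$ even, $N(A)$ is exactly the union of the odd levels up to $r+1$, the alternating-sum identity gives $|N(A)|-|A|=\binom{n-1}{r+1}$, and the constraint $|A|\le N/4$ pins $r$ at roughly $n/2$ in the tight case, where $\binom{n-1}{r+1}=\gO(N/\sqrt n)$ while $|N(A)|\le N/2$. (Your displayed inequality only covers that tight regime---for smaller $r$ you should compare $\binom{n-1}{r+1}$ with $|N(A)|=\sum_{\text{odd }k\le r+1}\binom{n}{k}$ rather than with $N/2$---but you acknowledge this, and there the relative expansion only improves.) You are also correct that applying ordinary Harper to $A\cup N(A)$ is too lossy and that the spectral/expander-mixing route only yields $\gO(1/n)$, so a genuinely parity-aware isoperimetric input is needed.

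The one real gap is the step you flag yourself: you leave the ``bipartite Harper'' extremality to be established by a new parity-preserving compression, and two-coordinate compressions in $Q_n$ are delicate (single-coordinate compressions do not preserve $\cE$). This can be short-circuited, and this is how the cited sources in effect proceed. Deleting the last coordinate gives bijections $\cE\to\{0,1\}^{n-1}$ and $\cO\to\{0,1\}^{n-1}$, and under these identifications $x\in\cE$ and $y\in\cO$ are adjacent in $Q_n$ if and only if their images are equal or adjacent in $Q_{n-1}$ (a parity check rules out their differing in both coordinate $n$ and some $i<n$). Hence $|N_{Q_n}(A)|=|N_{Q_{n-1}}[A']|$ for the image $A'$ of $A$, so the parity-restricted problem \emph{is} Harper's closed-neighborhood problem in $Q_{n-1}$, with $|A|\le N/4$ translating to $|A'|\le\frac12|V(Q_{n-1})|$; the extremal sets are then the balls you want, and your calculation (in $Q_{n-1}$) finishes the proof. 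With that substitution for the compression step, your plan is complete and coincides with the standard proof.
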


\begin{prop}\label{prop:isop1} 
For $A$ a subset of either $\cE$ or $\cO$ and $k=n^{o(1)}$,
\[\mbox{if } |A|=n^k, \mbox{ then } |N(A)|>(1-o(1))(|A|n/k).\]
\end{prop}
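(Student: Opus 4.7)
The plan is to derive this from Harper's vertex-isoperimetric inequality for $Q_n$, which says that for any $A\sub V$, the closed neighborhood $|A\cup N(A)|$ is minimized (for fixed $|A|$) by the initial segment of the simplicial order on $V$ (Hamming ball of some radius $r$, padded with an initial colex segment of $L_{r+1}$). Since $A\sub \cE$ is independent, $A$ and $N(A)$ lie in opposite parity classes, so $|A\cup N(A)|=|A|+|N(A)|$, hence $|N(A)|\ge |B\cup N(B)|-|A|$ with $B$ the extremal initial segment of size $|A|$.

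Next I would pin down the structure of $B$. Choose $r$ so that $|B_r|\le |A|<|B_{r+1}|$ and write $B=B_r\cup E$ with $E\sub L_{r+1}$ of size $e:=|A|-|B_r|$. Then $B\cup N(B)=B_{r+1}\cup \partial^+ E$, so
\[
|N(A)|\ge |B\cup N(B)|-|A|=\binom{n}{r+1}-e+|\partial^+ E|,
\]
and the LYM bound gives $|\partial^+ E|\ge e(n-r-1)/(r+2)$. Since $k=n^{o(1)}$ and $|A|=n^k$, the relation $|B_r|\le n^k<|B_{r+1}|$ together with $|B_r|\sim \binom{n}{r}$ (for $r\ll n$) and Stirling forces $r=(1+o(1))k$. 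Consequently
\[
\binom{n}{r+1}=\binom{n}{r}\cdot\frac{n-r}{r+1}=(1+o(1))\,n^k\cdot\frac{n}{k},
\]
so the ``main'' binomial already delivers $\binom{n}{r+1}\sim |A|n/k$.

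To finish I would run a short case split on $e$. If $e$ is small compared with $\binom{n}{r+1}$, the bound follows immediately from the dominant $\binom{n}{r+1}$ term. If $e$ is a non-negligible fraction of $\binom{n}{r+1}$, the upper-shadow contribution is $|\partial^+ E|-e\ge e[(n-r-1)/(r+2)-1]$, and since $(n-r-1)/(r+2)=(1+o(1))n/k\gg 1$, this term by itself is $(1-o(1))en/k$ and again dominates the deficit. Putting the two regimes together gives $|N(A)|\ge (1-o(1))|A|n/k$.

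The main obstacle, I expect, is the uniform asymptotic bookkeeping: verifying $\binom{n}{r+1}-e+|\partial^+ E|\ge (1-o(1))|A|n/k$ smoothly across the whole range $0\le e<\binom{n}{r+1}$, where the two terms trade off against each other. The key identities $\binom{n}{r+1}/\binom{n}{r}=(n-r)/(r+1)$ and $|\partial^+ E|/|E|\ge (n-r-1)/(r+2)$ are both $(1+o(1))n/k$ under the chosen $r$, so the bound turns on showing that these two ratios agree to first order; once that is established, the convex combination of the two regimes yields the claimed $(1-o(1))$ factor. A secondary technical issue is the approximation $|B_r|\sim \binom{n}{r}$, which is routine for $r\ll n$ (true here since $r\sim k=n^{o(1)}$).
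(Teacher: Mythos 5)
The paper itself does not prove this proposition; it is quoted from \cite[Lemma 3.4]{KPq}, so there is no in-paper argument to compare with, but your Harper/Kruskal--Katona route is the natural one and most of it is sound. By parity $|A|+|N(A)|=|A\cup N(A)|\ge |B\cup N(B)|=|B_{r+1}|+|\partial^+E|$, and local LYM gives $|\partial^+E|\ge e(n-r-1)/(r+2)$. Also, the ``trade-off between the two regimes'' that you flag as the main obstacle is in fact a non-issue: since $\binom{n}{r+1}=(1-O(r/n))\,|B_r|(n-r)/(r+1)$ and $(n-r-1)/(r+2)-1$ are each at least $(1-o(1))(n-r-1)/(r+2)$, no case split on $e$ is needed and you get, uniformly, $|N(A)|\ge(1-o(1))\,|A|\,n/(r+2)$.

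The genuine gap is the step asserting that $r=(1+o(1))k$ forces $(n-r-1)/(r+2)=(1+o(1))n/k$ (and that $\binom{n}{r+1}\sim|A|n/k$; the latter is off by a power of $n$ when $k$ is, say, a non-integer constant). What the argument actually needs is $r+2\le(1+o(1))k$, and from $|B_r|\le n^k<|B_{r+1}|$ one only gets $k-1-o(1)<r\le(1+o(1))k$, so $r+2\le(1+o(1))k$ holds only when $k\to\infty$. For bounded $k$ your argument yields $|N(A)|\ge(1-o(1))\,|A|\,n/(k+2)$ and no more, and this is not a repairable bookkeeping slip: taking $A=\{0\}\cup\{x:\sum x_i=2\}\subseteq\cE$ gives $|A|=1+\binom{n}{2}$, hence $k\to 2$, while $N(A)$ consists of the vertices of weight $1$ or $3$ and has size $n+\binom{n}{3}=(1+o(1))\,|A|\,n/3$, which is smaller than $(1-o(1))\,|A|\,n/2$. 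So the proposition as literally stated fails for bounded $k$, and your proof is correct exactly in the regime $k=\omega(1)$ where it is true. Note that the paper's two applications (deducing $t\sim g$ when $g\le n^2$, and $a_i=O(g_i/n)$ for small $i$) only need the weaker bound $|N(A)|\ge(1-o(1))\,|A|\,n/(k+2)$, which your argument does establish for all $k=n^{o(1)}$; the clean fix is to prove and use that version, or to add the hypothesis $k\to\infty$.
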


The next lemma, which recalls what we need from \cite{Sap87}, follows from Lemmas 5.3-5.5 of the more accessible \cite{GS}. Here, for whatever $A \sub \cE$ is being discussed, we take $G=N(A)$ and $t=|G|-|[A]|$. 

\begin{lem} \label{lem:GS}
For $g \in [n^4, N/4]$ and $\cG=\cG(a, g):=\{A \subseteq \cE : A \mbox{ is 2-linked and closed, } |A|=a \mbox{ and } |G|=g\}$, there are $\cW=\cW(a, g) \subseteq 2^{\cE} \times 2^{\cO}$ with
\[ |\cW|=2^{O(t\log^2n/\sqrt n)}\]
and $\varphi=\varphi_{a,g}:\cG \rightarrow \cW$ such that for each $A \in \cG$, $(S,F):=\varphi(A)$ satisfies:

\begin{enumerate}[(a)]
\item $S \supseteq A~(=[A]), F \subseteq G$;
\item $d_F(u) \ge n-\sqrt n/\log n \quad \forall u \in S$;
\item $|S| \le |F| + O(t/(\sqrt n \log n)).$
\end{enumerate}
\end{lem}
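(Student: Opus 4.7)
\textbf{Proof plan for Lemma \ref{lem:step3}.}

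Fix $M^*$; assume WLOG it uses direction $n$ and parity $0$, and set $W := V(M^*)$, $Z := V \sm W$ (both of size $N/2$). The only edges of $Q_n$ inside $W$ are those of $M^*$, those inside $Z$ form a canonical matching $M^{**}$, and all other edges go between $W$ and $Z$. For an MIS $I$ satisfying the hypotheses, put
\[
D(I) := \{e \in M^* : e \cap I = \0\}, \quad A(I) := V(D(I)) \cap \cE, \quad d := |D(I)| = |A(I)|;
\]
then $1 \le d \le \gz N$ (lower bound: $I \not\sim M^*$; upper bound: $D(I) \sub M^* \sm M(I)$ and $|M(I) \gD M^*| < \gz N$). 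The plan is to bound the count in \eqref{IM*} by (i) controlling, for each $A$, the number of $I$'s with $A(I) = A$, and (ii) enumerating the possible $A$'s via Lemma \ref{lem:GS}.

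For (i): $I \cap W$ has $2^{N/4 - d}$ a priori options (one endpoint per hit $M^*$-edge), and $I \cap Z$ is determined by $I \cap W$ up to a $2^b$ factor from ``bad'' $M^{**}$-pairs $\{z, z^n\} \sub Z$ with $\{z, z^n\} \cap N(I \cap W) = \0$. A direct argument gives $b \le d/(n-1)$: each bad pair forces the $n-1$ ``adjacent'' $M^*$-edges (in directions $j < n$) to be defect, and these uses are disjoint across pairs. Crucially, maximality also requires each $v \in A$ to have a dominator $v^j \in I \cap Z$ ($j < n$); translating to $I \cap W$ fixes roughly $n - 2$ chosen endpoints per dominator, and the OR over $j < n$ has ``probability'' at most $(n-1) \cdot 2^{-(n-2)}$ over a uniform $I \cap W$. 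Inspection at $d = 1$ shows the constraints for $v$ and $v^n$ are incompatible unless accommodated by another defect, forcing $d \ge 2$; in general each 2-linked component of $A$ (in $\cE$) must have size $\ge 2$. The resulting bound is
\[
\#\{I : A(I) = A\} \le 2^{N/4 - d/2 - \gO(sn)},
\]
where $s$ denotes the number of 2-components of $A$ and the $\gO(sn)$ captures the per-component constraint savings.

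For (ii): decompose $A = A_1 \sqcup \cdots \sqcup A_s$ into 2-linked components in $\cE$, and pass to closures $[A_j]$ (2-linked and closed). For components with $g_j := |N([A_j])| \ge n^4$, apply Lemma \ref{lem:GS} to get containers $(S_j, F_j) \in \cW(a_j, g_j)$ with $|\cW(a_j, g_j)| \le 2^{O(t_j \log^2 n/\sqrt{n})}$ (where $a_j := |[A_j]|$, $t_j := g_j - a_j$). Small components ($g_j < n^4$) are enumerated directly at cost $2^{O(a_j \log n)}$ per starting vertex via Proposition \ref{prop:setcost}. Specifying the profiles $(a_j, g_j)$ and the composition of $d$ costs a further $2^{O(sn)}$. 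Combining and summing,
\[
|\{I \not\in \cI^* : |M(I) \gD M^*| < \gz N\}| \le \sum_{d, s} 2^{O(sn) + O((\sum_j t_j) \log^2 n/\sqrt{n})} \cdot 2^{N/4 - d/2 - \gO(sn)}.
\]
Choosing the constant in the $\gO(sn)$ of (i) strictly larger than the $O(sn)$ of (ii) absorbs those terms; Proposition \ref{prop:isop} (plus the direct enumeration for small components) keeps the Sapozhenko factor at $2^{o(d)}$, absorbed into $2^{-d/2}$. Since $d \ge 2$ and $s \ge 1$ already yield $\gO(n)$ total savings, the final count is $\le 2^{N/4 - \gO(n)}$, which is $\le 2^{N/4 - \go(n/\log n)}$ as required.

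\textbf{Main obstacle.} The delicate step is (i): proving that each 2-linked component of the defect yields $\gO(n)$ effectively independent constraints on $I \cap W$. This follows from direct case analysis for small components (and explains the rigidity witnessed at $d = 1, 2$); for arbitrary components, the plan is to exploit the container $(S_j, F_j)$ from Lemma \ref{lem:GS} -- in particular the uniform near-full degree $d_{F_j}(u) \ge n - \sqrt{n}/\log n$ -- to argue that distant dominators within a component constrain disjoint $M^*$-edges, preserving the per-component $\gO(n)$ bit savings needed to beat the $O(sn)$ enumeration cost. Making this overlap analysis rigorous, uniformly across all possible shapes of defect clusters, is the crux of the proof.
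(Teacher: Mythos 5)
Your proposal does not address the statement in question. The statement is Lemma~\ref{lem:GS}, which is Sapozhenko's container lemma for 2-linked closed subsets of $\cE$: it asserts the existence of a small family $\cW$ of approximating pairs $(S,F)$ together with the map $\varphi$ satisfying (a)--(c). What you have written is a proof plan for Lemma~\ref{lem:step3} (the main counting step), not for Lemma~\ref{lem:GS}. Worse, your plan explicitly \emph{invokes} Lemma~\ref{lem:GS} as a black box in its part (ii), ``apply Lemma~\ref{lem:GS} to get containers $(S_j,F_j)$'' -- so even if one charitably tried to reinterpret your sketch as bearing on~\ref{lem:GS}, the argument would be circular. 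The paper itself does not reprove Lemma~\ref{lem:GS}: it is imported from Sapozhenko [Sap87] via Lemmas 5.3--5.5 of Galvin's exposition [GS], with the parameters $\vp=n/2$, $\psi=\sqrt n/\log n$, and the restriction $g\le N/4$ (which, via Proposition~\ref{prop:isop}, gives $t=\gO(g/\sqrt n)$ and selects the appropriate bound in [GS, Lemma 5.4(20)]). A proof of~\ref{lem:GS} would need to build those containers from scratch (degree-based thinning to get $F$ and $S$, a covering of the ``bad'' vertices by small balls, and an entropy/counting argument to bound $|\cW|$), none of which appears in your sketch.

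Separately, even as a plan for Lemma~\ref{lem:step3} your sketch has an unresolved gap, which you acknowledge in your ``Main obstacle'' paragraph: you need each 2-linked defect component to contribute $\gO(n)$ \emph{effectively independent} bit savings in order to beat the $2^{O(sn)}$ enumeration overhead, and no rigorous argument for this is given. The paper's actual route to~\ref{lem:step3} is different and avoids this: it passes to the labeling $\gs$ of $Q_{n-1}$, partitions the defect components into small, tight and slack classes, and uses the two-stage algorithm of Section~\ref{sec:alg} together with the stability Theorem~\ref{KPs} to bound the cost of labeling each $[A_i]\cup G_i$ (respectively $S\cup N(S)$ for the slack block) by $g_i-\gO(t_i)$ (respectively $|N(S)|-\gO(\eps t')$), rather than trying to extract $\gO(n)$ savings per component directly from domination constraints.
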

\nin (For the reader familiar with or consulting \cite{GS}: we use the lemmas mentioned above
with $\vp=n/2$ (note his $\vp$ is unrelated to the one in Lemma~\ref{lem:GS})
and $\psi=\sqrt n/\log n$; the restriction to $g \le N/4$, with Proposition~\ref{prop:isop}, gives $t=\gO(g/\sqrt n)$, so that in Lemma 5.4 of \cite{GS} we are looking at the second bound in (20).)

\section{Algorithm}\label{sec:alg}

Here we isolate an algorithmic framework that will play key roles in the proofs of
Lemmas \ref{lem:step1} and \ref{lem:step3}.
Like the basic algorithm in \cite{KPs}, this is motivated by
an idea for counting (ordinary) independent sets due to Sapozhenko \cite{Sap07},
but the analyses here seem new; see the preview at the end of this section.

For the algorithm we fix some order ``$\prec$" on $V=V(Q_n)$.
(This basic discussion makes sense for a general graph
$G$ and independent set---as opposed to MIS---$I$, but we stick to what we will use.)

\mn
\textbf{[Algorithm] }
Given $I\in \cI$ and $W \sub V$, let $X_0=W$ and repeat for $i=1, 2, \ldots$:

\begin{enumerate}
\item[(1)] Let $x_i$ be the first (in $\prec$) vertex of $X_{i-1}$
among those with largest degree in $X_{i-1}$.

\item[(2)] If $x_i \in I$ then let $X_i=X_{i-1} \setminus (\{x_i\} \cup N(x_i))$; otherwise, let $X_i=X_{i-1} \setminus \{x_i\}$.  Set $\xi_i=\textbf{1}_{\{x_i \in I\}}$.

\item[(3)]  STOP:  the stopping rule will vary.
\end{enumerate}

Let $X=X(I)$ be the final $X_i$ and $H=H(I)=Q_n[X]$.
Notice that $\xi=\xi(I)=(\xi_1, \xi_2, \ldots)$
encodes a complete description of the run of the algorithm (so we may also write $H=H(\xi)$),
including, in particular, the identities of the $x_i$'s; also that
\beq{algnote1}\mbox{$\xi(I)$ determines $X$ and $I \setminus X$}\enq
and
\beq{algnote2}\mbox{$I \cap X$ is an MIS of $H$}.\enq

Analyses for the several uses of \textbf{[Algorithm]} below will vary.
We close this discussion of what's common with two easy observations that will be needed in all cases, together with the promised preview.

\begin{prop}\label{Pstupid}
For $\xi$ running over binary strings, with $|\xi|$ denoting the length of $\xi$,
and positive integers $l$ and $r\leq l/2$,
\[
\log|\{\xi:|\xi|\leq l, |\supp(\xi)|\leq r\}| ~\leq ~ r\log(l/r) +O(r)+\log(l+1).
\]
\end{prop}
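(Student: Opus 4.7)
The plan is a routine entropy-style estimate: separate the count by length, apply the standard tail bound on binomial sums, and then take logarithms.

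First I would decompose by length, writing
\[
|\{\xi : |\xi| \le l, \ |\supp(\xi)| \le r\}| \;=\; \sum_{k=0}^{l} \sum_{j=0}^{r} \binom{k}{j}.
\]
For each $k$ with $2r \le k \le l$, the inner sum is bounded by the standard estimate $\sum_{j\le r} \binom{k}{j} \le (ek/r)^r \le (el/r)^r$ (proved, e.g., from $\binom{k}{j} \le \binom{k}{r}$ for $j \le r \le k/2$ together with the geometric decay of successive binomial coefficients, or from the entropy bound $\sum_{j\le r}\binom{k}{j} \le 2^{kH(r/k)}$ and $H(x) \le x\log(e/x)$). For the remaining $k<2r$, I would use the trivial bound $\sum_{j\le r}\binom{k}{j} \le 2^k \le 2^{2r-1}$.

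Putting these together, since there are at most $l+1$ values of $k$ contributing to the first regime and at most $2r$ to the second,
\[
|\{\xi : |\xi| \le l, \ |\supp(\xi)| \le r\}| \;\le\; (l+1)\left(\frac{el}{r}\right)^r + 2r \cdot 2^{2r-1}.
\]
The hypothesis $r \le l/2$ gives $l/r \ge 2$, hence $(el/r)^r \ge (2e)^r \ge 2^{2r}$, so the second term is absorbed into a constant-factor blow-up of the first, yielding a bound of the form $C(l+1)(el/r)^r$. Taking $\log_2$ gives
\[
\log(l+1) + r\log(l/r) + r\log e + O(1) \;=\; r\log(l/r) + O(r) + \log(l+1),
\]
which is the claimed inequality.

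There is really no serious obstacle here; the only mild nuisance is handling the short strings (those with $k<2r$), but they contribute only $O(r)$ to the logarithm and are easily dominated by the $(el/r)^r$ term once one uses $r\le l/2$.
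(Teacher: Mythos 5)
Your proof is correct and follows essentially the same route as the paper, which simply invokes $\log\sum_{t\le r}\binom{l}{t}\le lH(r/l)$ (binary entropy) together with the factor $l+1$ for the length; your bound $(ek/r)^r$ is the same estimate in exponential form. The only cosmetic difference is that the paper avoids your case-split at $k<2r$ by bounding every $\binom{k}{j}$ with $\binom{l}{j}$ at the outset, but this changes nothing of substance.
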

\begin{proof}  This follows from $\log \sum_{t\leq r}{l \choose t}\leq lH(r/l)$ (where $H$ is
binary entropy).\end{proof}

\begin{prop}\label{PZ}
If $Z\sub W\sub V$, $d_Z(x)\leq d ~\forall x\in Z$ and 
$|\nabla W|\leq L$, then
\[
|Z|\leq (2n-d)^{-1}(n|W|+L).
\]
\end{prop}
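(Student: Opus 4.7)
The plan is a direct double-counting of the edge boundary $\nabla Z$ of $Z$ in $Q_n$, exploiting that every vertex of $Q_n$ has degree exactly $n$.

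First I would lower-bound $|\nabla Z|$ from the $Z$-side. Since each $x\in Z$ has exactly $n$ neighbors in $V$ and at most $d$ of them lie in $Z$ by hypothesis, it has at least $n-d$ neighbors in $V\sm Z$. Summing,
\[
|\nabla Z|\;\geq\;(n-d)|Z|.
\]

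Next I would upper-bound $|\nabla Z|$ by splitting $V\sm Z=(W\sm Z)\sqcup (V\sm W)$. Edges from $Z$ to $V\sm W$ sit inside $\nabla W$ (because $Z\sub W$), so they contribute at most $|\nabla W|\leq L$. Edges from $Z$ to $W\sm Z$ I would count from the $W\sm Z$-side: each such vertex has at most $n$ neighbors in $Z$ (degree bound in $Q_n$), giving at most $n|W\sm Z|=n(|W|-|Z|)$. Therefore
\[
|\nabla Z|\;\leq\;n(|W|-|Z|)+L.
\]

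Combining the two bounds yields $(n-d)|Z|\leq n(|W|-|Z|)+L$, i.e.\ $(2n-d)|Z|\leq n|W|+L$, which is exactly the claim. There is no real obstacle here; the only mild subtlety is choosing to count the $Z$-to-$(W\sm Z)$ edges from the side opposite to where the degree hypothesis applies, so that the bound $n$ (rather than $d$) is used there and the two inequalities can be added to produce the coefficient $2n-d$ on the left.
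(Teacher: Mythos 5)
Your proof is correct and is essentially the paper's argument: the paper writes the same double count in the compressed form $n|W\sm Z|\geq |\nabla(Z,W\sm Z)|\geq |Z|(n-d)-L$, which is exactly your two bounds on $|\nabla Z|$ after moving the $\leq L$ term for edges leaving $W$ to the other side. The key point you flag---counting the $Z$-to-$(W\sm Z)$ edges from the $W\sm Z$ side so the degree bound $n$ applies there---is precisely what the paper does.
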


\begin{proof}  This follows from
\[
n|W\sm Z|~\geq ~|\nabla(Z,W\sm Z)| ~\geq ~|Z|(n-d) -L.
\]
\end{proof}

\nin
\emph{Preview}

In our uses of \textbf{[Algorithm]} one reason for stopping will usually be that degrees in $X_i$ fall below
some specified $d$; we then have a \emph{tradeoff:}

\nin
(i)  \emph{Larger} $d$ tends to mean smaller $\supp(\xi)$:  each $x_i\in I$ removes at least $d$ 
vertices from consideration, so $|\supp(\xi)|< |W|/d$.
(And by Proposition~\ref{Pstupid}, smaller $\supp(\xi)$ means fewer possibilities for $\xi$.)

\nin
(ii)  \emph{Smaller} $d$ tends to mean smaller $X$ (by Proposition~\ref{PZ}, applied with $Z=X$).  
Note the effect of varying $d$
is not insignificant here since we are usually interested in $|X|-|W|/2$.

\nin
A simple but seemingly new idea that is one of the main drivers of the present work
is that we can do better in (i) if we lower bound $d_{X_{i-1}}(x_i)$, not by the 
final cutoff $d$, but by whatever we get by plugging $X_{i-1}$ in for $Z$ in Proposition~\ref{PZ}.
We give two implementations of this idea; the first, in Section~\ref{sec:step1}, is more elegant and precise,
while the cruder version in Section~\ref{sec:step3} more simply illustrates the basic principle.
(See also Remark~\ref{Remark6.5}.)

\section{Proof of Lemma \ref{lem:step1}}\label{sec:step1}

In this section, $I$ is always in $\cI \setminus \cJ$.
The eventual key here is Theorem~\ref{KPs}, but we need to first reduce to a
place where the theorem is helpful---so to a vertex set of size not much more than
$N/2$ since we are interested in induced matchings of size around $N/4$.
The algorithm of Section \ref{sec:alg} provides a ``cheap'' way to do this.

For any subgraph $H$ of $Q_n$, let 
\[\MIS^*(H)=\{I \in \cI(H): m_H(I) ~\le~(1-\log^3 n/n)N/4\},\]
and $\mis^*(H)=|\MIS^*(H)|$. (Note the cutoff for $m_{H}(I)$ here is the one in the definition of $\cJ$.)

For the proof of Lemma \ref{lem:step1} we run \textbf{[Algorithm]} with input our unknown
$I$, stopping as soon as either
\begin{enumerate}
\item $|\supp(\xi)| \ge \frac{\log n}{2n}N$, or 
\item $X_i=\emptyset$,
\end{enumerate}
and let $X=X(I)$ and $H=H(I)$ ($=H(\xi)$) be as in Section \ref{sec:alg}.
Notice that $I \in \cI \setminus \cJ$ implies
\[\mbox{$I \cap X \in \MIS^*(H(I))$,}\]
so
\[|\cI \setminus \cJ| \le \sum_\xi \mis^*(H(\xi))\]
(where the sum runs over possible $\xi$'s). Proposition \ref{Pstupid} bounds the number of possible $\xi$'s by
\[\exp_2\left[O\left(\log^2 n/n\right)N\right],\]
so that Lemma \ref{lem:step1} will follow from
\beq{mistar}\log\mis^*(H(I)) \le \left(1-\gO\left(\frac{\log^3 n}{n}\right)\right)N/4 ~\mbox{ for all }~ I.\enq

\noindent \textit{Proof of} (\ref{mistar}). Fix $I$ and let $X=X(I)$ ($=V(H(I))$). 
We first show that $|X| $ 
cannot be much larger than $N/2$. Let $d_i=\max\{d_{X_i}(v):v \in X_i\}$ and $\ov X_i = V \setminus X_i$.

\begin{obs}\label{prop:step1}
For each $i$, $|X_i| \le (1+d_i/n)N/2$.
\end{obs}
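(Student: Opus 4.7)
The observation is essentially an immediate consequence of Proposition~\ref{PZ}, which is already stated in the previous section; it amounts to a one-line double-counting estimate. So my plan is just to record this directly.

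More explicitly, I would apply Proposition~\ref{PZ} with $W = V$, $Z = X_i$, and $d = d_i$. Here $d_{X_i}(x) \le d_i$ for every $x \in X_i$ holds by the very definition of $d_i$, and $|\nabla V| = 0$, so we may take $L = 0$. Proposition~\ref{PZ} then yields
\[
|X_i| \;\le\; \frac{nN}{2n - d_i}.
\]
To conclude, I would observe that
\[
(2n - d_i)(n + d_i) \;=\; 2n^2 + d_i(n - d_i) \;\ge\; 2n^2
\]
(since $0 \le d_i \le n$), so
\[
\frac{nN}{2n - d_i} \;\le\; \frac{N(n + d_i)}{2n} \;=\; \left(1 + \frac{d_i}{n}\right)\frac{N}{2},
\]
which is what we want.

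If one prefers to bypass Proposition~\ref{PZ} and do it from scratch, the identical argument runs as follows: each $v \in X_i$ has at most $d_i$ neighbors in $X_i$, hence at least $n - d_i$ edges leaving $X_i$; each vertex of $V \setminus X_i$ is incident to at most $n$ edges of $\nabla(X_i, V \setminus X_i)$. Double-counting $\nabla(X_i, V \setminus X_i)$ gives
\[
|X_i|(n - d_i) \;\le\; n(N - |X_i|),
\]
from which $|X_i|(2n - d_i) \le nN$ and the same inequality $(2n - d_i)(n + d_i) \ge 2n^2$ closes out the proof.

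There is no real obstacle to surmount here; the content of Section~\ref{sec:step1} lies in choosing the right stopping rule for \textbf{[Algorithm]} and then feeding the resulting subgraph $H(I)$ to Theorem~\ref{KPs}. Observation~\ref{prop:step1} will be used only as a clean bookkeeping bound on $|X_i|$ (really on $|X|$ at the moment of stopping) in terms of the maximum degree $d_i$ in $X_i$, so the crudeness of replacing $nN/(2n-d_i)$ by $(1 + d_i/n)N/2$ costs nothing in the sequel.
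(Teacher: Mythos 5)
Your proposal is correct and is exactly the paper's argument: the paper also derives the bound by applying Proposition~\ref{PZ} with $Z=X_i$, $W=V$ and $L=0$, leaving the elementary algebra $(2n-d_i)(n+d_i)\ge 2n^2$ implicit. Nothing to add.
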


\begin{proof} This follows from Proposition \ref{PZ} with $Z=X_i$ and $W=V$ (and $L=0$).\end{proof}

Define $\alpha_i$ by
\[|X_i|=(1+\alpha_i)N/2~ ;\]
so $\alpha_0=1$ and Observation \ref{prop:step1} says
\beq{sharp}d_i \ge \alpha_in.\enq

\begin{obs}\label{prop:step1(2)}
If $\xi_i=1$, then $\alpha_i < (1-2n/N)\alpha_{i-1}$.
\end{obs}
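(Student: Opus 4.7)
The plan is a direct computation once we track how $|X_i|$ changes in a step with $\xi_i = 1$ and feed in the degree bound \eqref{sharp} at index $i-1$.

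First I would compute $|X_{i-1}| - |X_i|$ exactly. When $\xi_i = 1$, the algorithm sets $X_i = X_{i-1} \setminus (\{x_i\} \cup N(x_i))$, which removes $x_i$ together with $N(x_i) \cap X_{i-1}$ from $X_{i-1}$ (the vertices of $N(x_i)$ outside $X_{i-1}$ were already gone). Since $x_i$ was chosen in step (1) to have maximum degree in $X_{i-1}$, we have $|N(x_i) \cap X_{i-1}| = d_{X_{i-1}}(x_i) = d_{i-1}$, giving
\[
|X_i| \;=\; |X_{i-1}| - 1 - d_{i-1}.
\]

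Next I would translate this into a recursion on $\alpha_i$ using the definition $|X_j| = (1+\alpha_j)N/2$: subtracting the two identities yields
\[
\alpha_{i-1} - \alpha_i \;=\; \frac{2(1 + d_{i-1})}{N}.
\]
Now I would invoke \eqref{sharp} at index $i-1$, namely $d_{i-1} \geq \alpha_{i-1} n$, to conclude
\[
\alpha_{i-1} - \alpha_i \;\geq\; \frac{2(1 + \alpha_{i-1} n)}{N} \;>\; \frac{2 n \alpha_{i-1}}{N},
\]
which rearranges to the desired $\alpha_i < (1 - 2n/N)\alpha_{i-1}$. The strict inequality is actually free, coming from the ``$+1$'' for the removal of $x_i$ itself.

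There is no real obstacle here: the observation is essentially bookkeeping, combining the one-step shrinkage of $X_i$ with the sharpness estimate from Observation~\ref{prop:step1}. The only thing to be a bit careful about is that $d_{i-1}$ (and hence the bound \eqref{sharp}) is indexed at $i-1$, not $i$; this is what makes the factor $(1 - 2n/N)$ multiply $\alpha_{i-1}$ rather than $\alpha_i$. The broader role of the observation (though not needed for its own proof) is that it forces geometric decay of $\alpha_i$ whenever $\xi_i = 1$, so that the total number of $1$'s in $\xi$ is $O((N/n)\log(1/\alpha_{\text{final}}))$ — this is what will eventually let the stopping rule with $|\supp(\xi)| \geq (\log n/(2n))N$ trigger only after $|X|$ is close to $N/2$, which is what \eqref{mistar} needs.
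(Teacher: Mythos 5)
Your proof is correct and is essentially identical to the paper's: both compute $|X_i|=|X_{i-1}|-d_{i-1}-1$, convert to the $\alpha$'s, and plug in the bound $d_{i-1}\ge \alpha_{i-1}n$ from \eqref{sharp}, with the strict inequality supplied by the removal of $x_i$ itself. Nothing further to add.
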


\begin{proof}
Using (\ref{sharp}), we have
\[(1+\alpha_i)N/2=|X_i|=|X_{i-1}|-d_{i-1}-1 <(1+\alpha_{i-1})N/2-\alpha_{i-1}n,\]
and the observation follows.
\end{proof}

\begin{prop}\label{Xsmall}
\beq{xstarubd}|X|<(1+1/n)N/2.\enq

\end{prop}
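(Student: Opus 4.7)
The plan is that Proposition \ref{Xsmall} should fall directly out of Observations \ref{prop:step1} and \ref{prop:step1(2)} combined with the stopping rule of \textbf{[Algorithm]}. Writing $\alpha$ for the value of $\alpha_i$ corresponding to the final $X_i = X$, I need to show $\alpha < 1/n$.

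First I would note that $\alpha_i$ is strictly decreasing in $i$: at a ``win'' step ($\xi_i = 1$), Observation \ref{prop:step1(2)} gives the multiplicative estimate $\alpha_i < (1 - 2n/N)\alpha_{i-1}$, while at a step with $\xi_i = 0$ we simply have $|X_i| = |X_{i-1}| - 1$ and therefore $\alpha_i = \alpha_{i-1} - 2/N$. So if $t = |\supp(\xi)|$ denotes the total number of wins during the run, then by applying the multiplicative bound at the $t$ win-indices and using monotonicity of $\alpha$ between them, one gets
\[ \alpha \;\le\; (1 - 2n/N)^t. \]
Now I would split on the reason the algorithm halts. If it halts because $X_i = \emptyset$, then $|X| = 0$ and the conclusion is trivial. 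Otherwise it has hit the support threshold, in which case $t \ge (\log n) N/(2n)$, and
\[ \alpha \;<\; (1 - 2n/N)^{(\log n)N/(2n)} \;\le\; \exp(-\log n) \;=\; 1/n, \]
giving $|X| = (1 + \alpha)N/2 < (1 + 1/n)N/2$ as required.

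There isn't really a substantive obstacle here; the content has already been pushed into the two observations (themselves immediate consequences of Proposition \ref{PZ}). The point perhaps worth flagging is that the stopping-rule threshold $(\log n)N/(2n)$ has been calibrated precisely so that iterating Observation \ref{prop:step1(2)} produces just enough geometric decay to force $\alpha < 1/n$—leaving $|X|$ only about $N/(2n)$ above $N/2$, which is the slack needed to subsequently feed $H(I)$ into Theorem \ref{KPs} with the cutoff at $(1 - \log^3 n / n) N/4$.
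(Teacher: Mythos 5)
Your proof is correct and is essentially the paper's own argument: both deduce $\alpha\le(1-2n/N)^{|\supp(\xi)|}<1/n$ from Observation \ref{prop:step1(2)}, monotonicity of $\alpha_i$, and the stopping threshold $|\supp(\xi)|\ge \frac{\log n}{2n}N$ (with the $X=\emptyset$ case being trivial). The only nitpick is that since $\log$ here is $\log_2$, the step $\exp(-\log n)=1/n$ is an equality only up to the base of the logarithm, but the resulting bound is if anything stronger, so nothing is affected.
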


\begin{proof}
Let $\alpha$ be the final $\alpha_i$ (so $|X|=(1+\alpha)N/2$). Assuming (as we may) that $X \ne \emptyset$, we have
\[|\supp(\xi)| \ge \frac{\log n}{2n}N,\]
so that Observation \ref{prop:step1(2)} (with $\alpha_0=1$ and the fact that $\alpha_i$ is decreasing in $i$) gives
\[\alpha\le(1-2n/N)^{\frac{\log n}{2n}N}<1/n ,\]
which is (\ref{xstarubd}). \end{proof}

\noindent The results quoted in Section \ref{sec:tools1} combined with (\ref{xstarubd}) now easily give (\ref{mistar}): if
\[|X|<(1-\gO(\log^3 n/n))N/2\]
then (\ref{mistar}) follows from Theorem \ref{HT}; otherwise, applying Theorem \ref{KPs} with $m=|X|$ and a suitable $\eps=\gO(\log^3 n/n)$ gives
\[\log\mis^*(H)< (1-c\eps)|X|/2< (1-\gO(\log^3 n/n))N/4.\]

\section{Proof of Lemma~\ref{lem:step2}} \label{sec:step2}

Let $M$ be as  in Lemma \ref{lem:step2}.
We may assume that
\beq{ldir} \mbox{$n-1$ and $n$ are the two directions least used by $M$.}\enq
Let $\pi:V \rightarrow V(Q_{n-2})$ be the natural projection, namely
\[\pi( (\eps_1, \ldots, \eps_n))=(\eps_1,\ldots,\eps_{n-2}),\]
and for $v \in V(Q_{n-2})$, let 
\[U_v=\pi^{-1}(v)=\{(v, \eps_{n-1}, \eps_n) : \eps_{n-1}, \eps_n \in \{0,1\}\}.\]
For the rest of this section, ``measure'' refers to $\mu$, the uniform measure on $V(Q_{n-2})$.

Say $v\in V(Q_{n-2})$ is \emph{red} (or \emph{in} $R$) if
$U_v\cap V(M)=\{(v, 0, 0), (v,1,1)\}$ and \textit{blue} ($v$ in $ B$)
if $U_v \cap V(M)=\{(v,1,0), (v, 0, 1)\}$.
(So $v\not\in R\cup B$ iff $U_v$ either contains an edge of $M$ or meets $V(M)$ at most once.)
Say $v\in R\cup B$ is \emph{good} if there is a (necessarily unique) $v'\in N_v$ with
the same color ($R$ or $B$) as $v$; thus
$v$ is good iff $U_v$ meets two edges of $M$ and these have the same direction, and 
\beq{obsgood2}
\mbox{if $w\sim v$ are both good then they have the same color iff $w=v'$.}
\enq

Let $X$ be the set of good vertices and $W=V(Q_{n-2}) \setminus X$ (the set of ``bad'' vertices).

\begin{obs} \label{obs:W}$\mu(W)=o(n^{-\beta})$
\end{obs}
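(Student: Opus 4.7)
My plan is to split $W$ into two parts: (a) columns not in $R \cup B$, and (b) ``bad'' columns inside $R \cup B$, and show each part has measure $o(n^{-\beta})$.

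For part (a), I would first observe that $V(M)$ being an induced matching forces every column $U_v$ to meet $V(M)$ in at most $2$ vertices: any three vertices of the copy of $Q_2$ on $U_v$ contain an edge, so three vertices in $V(M)$ would produce a vertex with two neighbors in $V(M)$, violating the induced condition. Partition the columns by type: $a_0$ (meets $V(M)$ in $0$ vertices), $a_1$ (in $1$ vertex), $a_2^e$ (in $2$ adjacent vertices, so an edge of $M$ lives inside $U_v$ and uses direction $n-1$ or $n$), and $|R| + |B|$ (in $2$ non-adjacent vertices). Double-counting $|V(M)| \geq (1-o(n^{-\beta}))N/2$ against $N/4$ columns gives $2a_0 + a_1 = o(n^{-\beta})N$. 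For $a_2^e$: by (\ref{ldir}) the directions $n-1, n$ are least-used by $M$, so the number of $M$-edges in these directions is at most $(2/n)|M| = o(n^{-\beta})N$, since $n^{-1} = o(n^{-\beta})$. Adding up, part (a) contributes $o(n^{-\beta})N$ vertices, i.e.\ measure $o(n^{-\beta})$.

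For part (b), focus on the set $R'$ of bad red vertices (bad blues are handled symmetrically). For $v \in R$ the vertices $(v,0,0), (v,1,1) \in V(M)$; since $(v,1,0), (v,0,1) \notin V(M)$, their $M$-partners must lie outside $U_v$, so they have the form $(v^i,0,0)$ and $(v^j,1,1)$ with $i,j \in [n-2]$. A short case analysis of the column $v^i$ (using induced-matching to exclude $(v^i,1,0), (v^i,0,1) \in V(M)$, each of which would force a second $M$-edge at $(v^i,0,0)$) shows that $v^i$ is either red or of type $T_1$, and similarly for $v^j$. Hence $v \in R'$ iff both $v^i$ and $v^j$ are $T_1$. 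The key step is that the map $R' \to \{T_1\text{ columns}\}$ given by $v \mapsto v^i$ is injective: if $v_1, v_2 \in R'$ both sent their respective partner columns to some common $w$, then the unique $V(M)$-vertex $(w,0,0)$ of the $T_1$ column $w$ would simultaneously be $M$-matched to $(v_1,0,0)$ and $(v_2,0,0)$, forcing $v_1 = v_2$. Consequently $|R'| \leq a_1 = o(n^{-\beta})N$, and the same bound for $|B'|$ finishes part (b).

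I don't anticipate a serious obstacle; the argument is a careful bookkeeping combining the induced matching property with the hypothesis on $|M|$ and the choice (\ref{ldir}) of the least-used directions. The most delicate point is the injectivity in part (b), which is essentially forced by $M$ being a matching but requires the case analysis to first guarantee that the partner column is indeed $T_1$ and that its unique $V(M)$-vertex is $(w,0,0)$.
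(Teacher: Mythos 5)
Your proposal is correct and follows essentially the same route as the paper: your types $a_2^e$, $a_0+a_1$, and $R'\cup B'$ are exactly the paper's three kinds of bad vertices (column containing an $M$-edge, column meeting $V(M)$ at most once, and red/blue with no same-colored neighbor), handled respectively by \eqref{ldir}, by double-counting against $|V(M)|$, and by an injection into the $T_1$ columns. Your injectivity step is precisely the paper's observation that each bad red/blue $v$ is joined by an $M$-edge to a column $U_w$ with $U_w\cap V(M)$ a single vertex, from which $v$ is uniquely recoverable.
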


\begin{proof}
As already noted, $v$ is bad iff it satisfies one of: (i) $U_v$ contains an edge of $M$;
(ii) $|U_v \cap V(M)|\le1$;
(iii) $v$ is red or blue and there is no vertex of the same color in $N_v$.
It follows from (\ref{ldir}) that the fraction of $v$'s of the first type is $O(1/n)$, and from
\eqref{Msize} that the fraction of the second type is $o(n^{-\beta})$.

For $v$ as in (iii), let $xy$ be one of the two $M$-edges meeting $U_v$, say with $x\in U_v$
and $y\in U_w$.
Then $U_w\cap V(M)=\{y\}$, $w$ is as in (ii), and $v$ is the unique vertex of $Q_{n-2}$
for which $U_v$ and $U_w$ are connected by an edge of $M$.  Thus the number of vertices in (iii) is less
than (actually at most half) the number in (ii), so these too make up an $o(n^{-\gb})$-fraction of the whole.
\end{proof}

Recall that the parity of the edge $vv^i$ is the parity of $\sum_{j\neq i}v_j$ and notice that
\beq{obsgood6}
\mbox{$v$ and $vv^i$ have the same parity iff $v_i=0$.}
\enq

It follows from \eqref{obsgood2} that 
$
T:=\{(v,v'):v\in X\}
$ 
is a perfect matching of $Q_{n-2}[X]$.
\begin{obs}\label{obsgood3}
Each $e=vv'\in T$ corresponds to two edges of $M$ 
($((v,0,0),(v',0,0))$ and $((v,1,1),(v',1,1))$ if $v\in R$ and similarly if $v\in B$),
and these edges have the same parity as $e$ if $v\in R$ and the opposite parity if $v\in B$.
\end{obs}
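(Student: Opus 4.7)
The statement is a direct unpacking of the definitions (red/blue, good, parity) together with a short arithmetic check; I do not foresee any real obstacle, and certainly nothing that requires the heavy machinery of earlier sections. My plan has two steps.

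First, I would identify the two $M$-edges through $U_v$ in terms of $e$. Fix $e = vv' \in T$ with $v \in R$, and let $i \in [n-2]$ be the direction of $e$. By definition of red, $V(M) \cap U_v = \{(v,0,0),(v,1,1)\}$; these two vertices are non-adjacent in $Q_n$, so each lies in an $M$-edge leaving $U_v$. Any edge of $Q_n$ between distinct fibers $U_u$ and $U_w$ must have direction in $[n-2]$ and agree in its last two coordinates, so the two $M$-edges are of the form $((v,0,0),(w_1,0,0))$ and $((v,1,1),(w_2,1,1))$ for some $w_1,w_2 \in N_v$ in $Q_{n-2}$. The hypothesis that $v$ is good says these two edges share a common direction in $Q_n$, which forces $w_1 = w_2$; inspecting $V(M) \cap U_{w_1}$ then shows that this common vertex is red, hence must be the unique red neighbor $v'$. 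The case $v \in B$ is word-for-word parallel and produces $((v,1,0),(v',1,0))$ and $((v,0,1),(v',0,1))$.

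Second, I would read off parities from the defining formula that the parity of $xx^i$ is $\sum_{j \neq i} x_j \pmod 2$. Writing $p$ for the $Q_{n-2}$-parity of $e$ (i.e.\ $p = \sum_{j \in [n-2] \setminus \{i\}} v_j$), the two $M$-edges in the $R$-case contribute $0+0$ and $1+1$ beyond the sum defining $p$, both $\equiv 0 \pmod 2$, so each has parity $p$, agreeing with $e$. In the $B$-case the extra contributions are $1+0$ and $0+1$, both $\equiv 1 \pmod 2$, so each $M$-edge has parity $p+1$, opposite to $e$. This is the full content of the observation; the only thing one has to be a little careful about is keeping the bookkeeping of the three coordinate blocks ($[n-2]\setminus\{i\}$, $\{i\}$, $\{n-1,n\}$) straight when comparing parities in $Q_n$ and $Q_{n-2}$.
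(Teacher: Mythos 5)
Your verification is correct and is exactly the intended (and only natural) argument: the paper states Observation 5.3 without proof, treating the identification of the two $M$-edges and the parity computation via the definition of edge parity (and the bookkeeping in \eqref{obsgood6}) as immediate. Your careful use of the induced-matching property to pin down $U_{w_1}\cap V(M)$ and hence $w_1=v'$ fills in the one step worth writing down.
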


Let $\Gamma=Q_{n-2}[X]-T$. 

\begin{obs}\label{diffcomp} For each $e \in T$, the ends of $e$ are in different components of $\Gamma$.
In particular no component of $\Gamma$ has measure more than 1/2.
\end{obs}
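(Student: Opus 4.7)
The plan is to exploit the interplay between two bipartitions: the red/blue coloring of the good vertices, and the natural even/odd bipartition of $Q_{n-2}$.

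The first step I would take is to observe that $\Gamma$ is bipartite with parts $R\cap X$ and $B\cap X$. Indeed, every edge of $\Gamma$ is an edge $vw$ of $Q_{n-2}[X]$ that is not in $T$, so $w\ne v'$, and \eqref{obsgood2} then forces $v$ and $w$ to have opposite colors. On the other hand, every $T$-edge is monochromatic by the definition of $v'$. Thus $T$-edges and $\Gamma$-edges are cleanly separated by the color bipartition.

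Given this, I would prove the first statement by a parity argument. Fix $e=vv'\in T$ and suppose for contradiction that $v$ and $v'$ lie in a common component of $\Gamma$. Let $v=u_0,u_1,\ldots,u_k=v'$ be a path in $\Gamma$. Since colors must alternate along $\Gamma$-paths and $v,v'$ share a color, $k$ is even. On the other hand, this same sequence is a walk in $Q_{n-2}$ connecting the adjacent vertices $v$ and $v'$, which lie in opposite sides of the even/odd bipartition of $Q_{n-2}$, forcing $k$ to be odd. This contradiction establishes the first claim.

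For the ``in particular'' clause, let $C$ be a component of $\Gamma$. Since $T$ is a perfect matching of $X$ and, by what we just showed, each $T$-edge meeting $C$ has its other endpoint in $X\setminus C$, we get $|X\setminus C|\ge |C|$; hence $\mu(C)\le |X|/(2|V(Q_{n-2})|)\le 1/2$. I do not expect any serious obstacle here: the whole statement is essentially a compatibility check between the color bipartition inherited from the definition of ``good'' and the intrinsic even/odd bipartition of $Q_{n-2}$.
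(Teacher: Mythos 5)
Your proof is correct and is essentially the paper's argument: both rest on the facts that $\Gamma$-edges join oppositely colored good vertices (by \eqref{obsgood2}) while $T$-edges are monochromatic, and then derive a contradiction from the bipartiteness of $Q_{n-2}$ (the paper phrases this as $P\cup\{e\}$ being an odd cycle, you as a parity mismatch in the path length — the same thing). Your explicit matching argument for the ``in particular'' clause, which the paper leaves implicit, is also fine.
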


\begin{proof}
Assume for a contradiction that $e=xy$ and 
$P=(x=x_0,x_1,x_2,\ldots,x_k=y)$ is a path in $\Gamma$. 
Notice that (\ref{obsgood2}) implies $x_i$ and $x_{i+1}$ have different colors, 
while $x$ and $y$ have the same color. 
Thus $P \cup \{e\}$ is an odd cycle in $Q_{n-2}$, which is impossible.
\end{proof}

For the rest of this discussion we do not distinguish between components and their vertex sets.

\begin{prop}\label{2partn}
$\Gamma$ contains two components of measure $1/2-o(1)$.
\end{prop}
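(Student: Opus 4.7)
The plan is to produce the two required components as the unique ``large'' pieces of a natural bipartition $X = V^+ \sqcup V^-$, after which an isoperimetric dichotomy together with a continuity argument forbids either piece from fragmenting.

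First, by (\ref{obsgood2}) any two $\Gamma$-adjacent vertices have different colors (R vs.\ B); combined with the bipartiteness of $Q_{n-2}$, this forces the R/B coloring inside each $\Gamma$-component $C$ to align with parity, so $C$ is either of \emph{type} $+$ (its $\cE$-vertices are R and its $\Oh$-vertices B) or of \emph{type} $-$ (the reverse). Let $V^+$ and $V^-$ be the unions of type $+$ and type $-$ components, respectively, so $X = V^+ \sqcup V^-$. By the definition of $T$, each $T$-edge has same-colored endpoints, and since such an edge is in bipartite $Q_{n-2}$, it must join $V^+$ to $V^-$. As $T$ is a perfect matching of $Q_{n-2}[X]$ this forces $|V^+| = |V^-| = |T| = |X|/2$, hence $\mu(V^\pm) = (1-\mu(W))/2 = 1/2 - o(n^{-\gb})$ by Observation~\ref{obs:W}.

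Next, I claim a dichotomy: for any union $R$ of $\Gamma$-components contained in $V^+$, $\mu(R)$ is either $o(1)$ or $1/2 - o(1)$. Because $\Gamma$-edges stay within single components and $T$-edges never lie inside $V^+$, every $v \in R$ has exactly one $Q_{n-2}$-neighbor outside $R \cup W$, namely its $T$-partner in $V^-$. Applying Corollary~\ref{KPi.cor} in $Q_{n-2}$ to $R$ with $U=W$ and $S=V(Q_{n-2}) \sm R \sm W$ thus yields
\[
\mu(R) \;=\; \int_R h_{R \cup W}\, d\mu \;\ge\; 2(\mu(R)+\mu(W))(1-\mu(R)-\mu(W)) \;-\; n^\gb \mu(W).
\]
Using $\mu(W) = o(n^{-\gb})$ (so $n^\gb \mu(W) = o(1)$), this reduces to $\mu(R)(1-2\mu(R)) \le \eps_n$ for some explicit $\eps_n = o(1)$, and hence $\mu(R) \le 2\eps_n$ or $\mu(R) \ge 1/2 - 2\eps_n$.

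Finally, a continuity argument extracts a single large component. If no $\Gamma$-component in $V^+$ had measure $\ge 1/2 - 2\eps_n$, the dichotomy applied to each single component would force every such component to have measure at most $2\eps_n$. Enumerating these components as $A_1, A_2, \ldots$ and setting $R_k = A_1 \cup \cdots \cup A_k$, the values $\mu(R_k)$ climb from $0$ to $\mu(V^+) = 1/2 - o(n^{-\gb})$ in jumps of size at most $2\eps_n$, so some $\mu(R_k)$ lands in the forbidden interval $(2\eps_n, 1/2 - 2\eps_n)$, contradicting the dichotomy. Hence $V^+$ contains a $\Gamma$-component of measure $\ge 1/2 - 2\eps_n$; by Observation~\ref{diffcomp} this measure is at most $1/2$, so it equals $1/2 - o(1)$. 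The same argument applied to $V^-$ produces the second such component. The main obstacle is that a direct per-component application of Theorem~\ref{KPi} is too weak on small components (where the correction $|\nabla(C,W)|/|C|$ need not be $o(1)$), so it cannot rule out $V^+$ fragmenting into many small pieces one component at a time; applying Corollary~\ref{KPi.cor} to arbitrary unions, paired with the continuity step, is what patches this.
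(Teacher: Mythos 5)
Your proof is correct and is essentially the paper's: the heart of both arguments is applying Corollary~\ref{KPi.cor} to a union $R$ of $\Gamma$-components, using that the only $Q_{n-2}$-edges leaving $R$ within $X$ are $T$-edges (so $h_{R\cup W}\le 1$ on $R$) to get the dichotomy $\mu(R)(1-2\mu(R))\le o(1)$; your $V^+/V^-$ typing is a sharpened form of Observation~\ref{diffcomp}, and your continuity step just makes explicit the extraction the paper leaves implicit in \eqref{Z}. One cosmetic slip: from $\mu(R)(1-2\mu(R))\le\eps_n$ the correct thresholds are of order $\sqrt{\eps_n}$ rather than $2\eps_n$ (both factors could be moderately small simultaneously), but this is still $o(1)$ and the argument is unaffected.
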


\nin
(We really only need one such component, but for the same price can give the correct picture.)

\begin{proof}
This follows from Observation~\ref{diffcomp} and
\beq{Z}
\mbox{If $Z$ is a union of components of $\Gamma$ with $z:=\mu(Z)\leq 1/2$, 
then $z$ is either $o(1)$ or $1/2-o(1)$.}
\enq
\begin{proof}[Proof of \eqref{Z}]
Set $Y=X \setminus Z$. Since $\nabla(Z,Y) \subseteq T$ and $T$ is a perfect matching of $Q_{n-2}[X]$, we have $h_{Z \cup W}(x) \in \{0,1\}$ for $x \in Z$, which with Corollary~\ref{KPi.cor} 
(applied in $Q_{n-2}$ with $(R,S,U)=(Z,Y,W)$) and Observation~\ref{obs:W} gives
\[z \ge \int_Z h_{Z\cup W} d\mu \ge 2z(1-z)-o(1),\]
implying \eqref{Z}.
\end{proof}\end{proof}

Let $Z$ be one of the two large components promised by Proposition \ref{2partn} and 
$Y=X \setminus Z$. 
Again (as in the proof of \eqref{Z}), we have $h_{Z \cup W}(x) \in \{0,1\}$ for $x \in Z$, which with 
Observation~\ref{obs:W} and Theorem~\ref{KPii} implies that there are $i \in [n-2]$ with
\beq{nablaiA}
|(\nabla_i Z)\cap T|=|\nabla_i (Z,Y)| \sim 2^{n-3}
\enq
and $\eps \in \{0,1\}$ such that
\[
\mbox{all but $o(2^n)$ vertices of $Z$ lie in 
the subcube $C(i,\eps)=\{v:v_i=\eps\} ~~~(\subseteq V(Q_{n-2}))$.}
\]

Assume (w.l.o.g.) that $\eps=0$ and set 
\[
Z'=\{v\in Z\cap C(i,0): vv^i\in T\}.
\]

Connectivity of $Z$ and \eqref{obsgood2} imply 
\beq{obsgood5}
\mbox{any two vertices of $Z$ either agree in both color and parity or disagree in both.}
\enq

Finally, for Lemma~\ref{lem:step2}:
For $v,w\in Z'$, Observation~\ref{obsgood3} and \eqref{obsgood6} imply that 
the edges of $M$ corresponding to $vv^i$ and $ww^i$ have the same 
parity iff $v$ and $w$ either agree in both parity and color or disagree in both;
but \eqref{obsgood5} says this is true for \emph{any} $v,w\in Z'$.
So \emph{all} edges of $M$ corresponding to edges of $\nabla_i(Z',Y)$ have the same parity
and the lemma follows from \eqref{nablaiA}.

\section{Proof of Lemma \ref{lem:step3}} \label{sec:step3}

For the discussion in this section we fix a canonical matching $M^*$ and show
(proving Lemma~\ref{lem:step3})
\beq{IM*}
|\{I\not\in \cI^*: |M(I)\gD M^*|< \gz N\}|= 2^{N/4-\go(n/\log n)}.
\enq
Assume (w.l.o.g.) that $M^*$ is the set of odd edges in direction $n$ and let
$\pi:V(Q_n) \rightarrow V(Q_{n-1})$ be the projection
\[\pi((\eps_1, \ldots, \eps_n))=(\eps_1,\ldots,\eps_{n-1}).\]
Thus $\pi(V(M^*))$ is the set of odd vertices in $Q_{n-1}$, which we from now on denote by
$\Oh$.

For $\eps \in \{0,1\}$ let $V_\eps=\{x \in V(Q_n):x_n=\eps\}$, and for $v \in V(Q_{n-1})$
let $\pi^{-1}(v)=\{v_0, v_1\}$ where $v_\eps \in V_\eps$.
(We will not use the \emph{coordinates} of $v$, so "$v_\eps$" should cause no confusion.)
For $I\in \cI$, define the \emph{labeling $\gs=\gs(I)$ of $V(Q_{n-1})$} by:
\[ \sigma_v=
\begin{cases}
0 & \mbox{ if } \quad v_0 \in I\\
1 & \mbox{ if } \quad v_1 \in I\\
\Lambda & \mbox{ if } \quad I \cap \{v_0, v_1\}=\emptyset\\
\end{cases}\]
Say $v$ is \textit{unoccupied} if $\sigma_v = \Lambda$,
and \textit{occupied} otherwise. Note that (since $I \in \cI$)
\beq{gs1}
\mbox{no two adjacent vertices have the same label from $\{0,1\}$}
\enq
and
\beq{gs2}
\mbox{if $\gs_v=\Lambda$ then both $0$ and $1$ appear on neighbors of $v$.}
\enq
Call a labeling $\gs:V(Q_{n-1}) \rightarrow \{0,1,\Lambda\}$ \textit{legal} if it satisfies 
\eqref{gs1} and \eqref{gs2},
and notice that
$I\mapsto \gs(I)$ is a bijection between $\cI$ and the set of legal labelings.
We will find both viewpoints useful in what follows
and will assume, often without explicit mention,
that when we are discussing $I$ the labeling referred to is $\gs(I)$.

\mn

For the rest of Section~\ref{sec:step3} we restrict to $I$ as in \eqref{IM*}, noting that then $\gs=\gs(I)$
satisfies
\beq{as1}
 \mbox{all but a $o(1)$-fraction of odd vertices are occupied} 
 \enq
and, by Observation~\ref{rimObs},
\beq{as2}
\mbox{only a $o(1)$-fraction of the even vertices are occupied.}
\enq

Notation below ($\cE^*$, $A_i$ and so on) is for a given $I$, which the notation suppresses.
Write $\cE^*$ for the set of occupied even vertices.
 Notice that $I \notin \cI^*$ implies that
there is at least one unoccupied $v\in \Oh$, which by \eqref{gs2} must have neighbors in both
$\gs^{-1}(0)$ and $\gs^{-1}(1)$;
in particular
\beq{as3}
\mbox{there is a non-singleton 2-component in $\cE^*$.}
\enq
(Recall $k$-components were defined in Section~\ref{sec:MD}.)

\nin
\emph{Notation.}

\begin{itemize}
\item $A_i$'s : non-singleton 2-components of $\cE^*$

\item $A=\cup A_i$
\item $G_i=N(A_i)$, $G=N(A)$
\item $A_i \mbox{ (or simply $i$) is }
\begin{cases} \mbox{\textit{ small } if $|G_i|<n^4$ and} \\
\mbox{\textit{ large } otherwise} \end{cases}$

\item $\hat X=\pi^{-1}(X)~$ (for $X \subseteq V(Q_{n-1})$).

\end{itemize}

We usually (without comment) use lower case letters for the cardinalities of the sets
denoted by the corresponding upper case letters, \emph{except} that we use
$a$ for $|[A]|$ and $a_i$ for $|[A_i]|$.
(Recall the closure $[A]$ of $A$ was defined in Section \ref{sec:MD}.)  We also set
$t_i=g_i-a_i$ and $t=g-a$, noting that $a\geq \sum a_i$ 
($[A]$ \emph{can} properly contain $\cup [A_i]$), so $t\leq \sum t_i$.

Before moving to lemmas we record two basic observations.  
The first says that in some sense all the action is in the $[A_i]$'s and $G_i$'s
(though this only approximately describes what will happen in the main argument; see \eqref{allverts}).
\beq{allverts'}
\mbox{All vertices of
$\Oh\sm G$ are occupied.}
\enq
\begin{proof}  All neighbors of the set in \eqref{allverts'} are in
$\cE\sm A$,
and any occupied vertex
from \emph{this} set is a singleton 2-component of $\cE^*$,
so by \eqref{gs2} has all its neighbors occupied (with a common label). \end{proof}

The second observation (this will be crucial; see \eqref{no.edge}-\eqref{rim'1} and 
\eqref{long.obs}, which leads \emph{via} \eqref{exactG} to
\eqref{35'}) is
\beq{occ.nbr}
\mbox{for each $i$, each edge contained in $\hat{G}_i$ has a neighbor in $I\cap \hat{A}_i$}
\enq
(that is, one of its ends has such a neighbor; note these edges form an induced matching in $Q_n$).

\subsection{Main lemma}

\nin 
We continue to restrict to $I$ as in \eqref{IM*} and to suppress dependence on $I$ in our notation.
In what follows we use ``cost of X'' for the $\log$ of the number of possibilities for X.

Before turning to our main point, Lemma~\ref{lem:step3main},
we observe that there is not much to do when $g$ is large: 
\begin{lem}\label{lem:step3main1} 
The number of $I$'s with $g=\gO(N)$ is $2^{N/4-\gO(N)}.$
\end{lem}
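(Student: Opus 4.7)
The plan is to exploit the hypothesis $|M(I) \Delta M^*| < \zeta N$ to bound the number of unoccupied odd vertices, then use $g = \Omega(N)$ to shrink the ``free'' label space on $\Oh$.

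Let $U = \{v \in \Oh : \gs_v = \Lambda\}$ be the set of unoccupied odd vertices. The first step is to observe that $|U| < \zeta N$: if $v \in U$ then $v_0, v_1 \notin I$, so the $M^*$-edge $v_0 v_1$ fails to meet $I$ and hence lies in $M^* \setminus M(I)$; thus $|U| \leq |M^* \setminus M(I)| < \zeta N$. Moreover, by \eqref{allverts'}, $U \subseteq G$.

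Given this, I would encode $I$ by specifying three pieces of data: (i) the structure of $\cE^*$ (its components $A_i$ together with any singletons in $\cE^* \setminus A$) and the labels $\gs|_{\cE^*}$; (ii) the subset $U \subseteq G$; and (iii) the labels $\gs_u \in \{0,1\}$ for $u \in \Oh \setminus G$ (occupancy being forced here by \eqref{allverts'}). Everything else is then forced: for $u \in G \setminus U$, the constraint $\gs_u \ne \gs_v$ together with $\gs_u \in \{0,1\}$ gives $\gs_u = 1 - \gs_v$ for any $A$-neighbor $v$ (consistency demanding uniform $\gs$ on $N_u \cap A$), and the labels on $\cE$ then follow from $\gs|_\Oh$ via the MIS condition.

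For piece (i), I would invoke Sapozhenko's Lemma \ref{lem:GS} on each closure $[A_i]$ with $g_i \geq n^4$, obtaining a container $(S_i, F_i)$ at cost $2^{O(t_i \log^2 n/\sqrt n)}$. Since $g \leq N/4$, Proposition \ref{prop:isop} gives $t = g - a = \Omega(g/\sqrt n)$, so the total large-component cost is $2^{O(g \log^2 n/n)} = 2^{o(N)}$. Small components ($g_i < n^4$) and singletons are handled via Proposition \ref{prop:setcost}, with $|\cE^*| = o(N)$ (from \eqref{as2}) keeping their total cost at $2^{o(N)}$; the labels $\gs|_{\cE^*}$ add another $2^{|\cE^*|} = 2^{o(N)}$ factor. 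Piece (ii) costs $\binom{|G|}{\leq \zeta N} = 2^{o(N)}$ for sufficiently small $\zeta$, and piece (iii) contributes the main term $2^{|\Oh \setminus G|} = 2^{N/4 - g}$.

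Multiplying, the total count is $2^{o(N)} \cdot 2^{N/4 - g} = 2^{N/4 - \Omega(N)}$ when $g = \Omega(N)$. The main obstacle is piece (i): a crude application of Proposition \ref{prop:setcost} to the potentially many small components and singletons could blow up, so one needs careful aggregation to keep everything within the $2^{o(N)}$ budget, likely leveraging both $|\cE^*| = o(N)$ and the 2-linked structural constraints (for example, bounding the number of components via the total mass $|\cE^*|$ and noting that singletons are ``isolated'' from other occupied even vertices in the 2-linking sense).
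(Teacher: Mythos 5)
Your overall encoding is the right one and matches the paper's: specify the occupied even vertices and their labels at cost $2^{o(N)}$, observe that this (with \eqref{allverts'}) forces everything except the labels on $\Oh\sm G$, and pay $N/4-g$ for those. But your execution of the first step has a genuine gap. Lemma \ref{lem:GS} produces a \emph{container} $(S_i,F_i)$ for $[A_i]$, not $[A_i]$ itself; recovering $[A_i]$ from $(S_i,F_i)$ is exactly the expensive part of the whole argument (it is why the paper's main lemma must distinguish ``tight'' from ``slack'' components, and in the slack case the sets are never identified at all), so invoking the container lemma does not specify $\cE^*$ and hence does not determine the labels on $G$ as your decoding requires. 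You also explicitly leave the small components and singletons unresolved. The repair is far simpler than anything you attempt: by \eqref{as2}, $|\cE^*|=o(N)$, so $\cE^*$ (in fact only $A$ is needed) can be specified as an \emph{arbitrary} subset of $\cE$ at cost $\log{N/4 \choose \le o(N)}=o(N)$ and labeled at cost $|\cE^*|=o(N)$; since the savings is $\gO(N)$, no isoperimetry, container lemma, or $2$-linked-set counting is needed. This is precisely what the paper does.

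Two smaller points. First, you invoke ``$g\le N/4$'' to apply Proposition \ref{prop:isop}, but that is not a hypothesis of this lemma (the point of the lemma is to dispose of large $g$ so that \eqref{gsmall} may be assumed afterwards); in your argument the isoperimetric input is not actually needed anyway, since $t\log^2 n/\sqrt n=o(N)$ trivially. Second, your piece (ii) is redundant: for $v\in G$, whether $v$ is unoccupied is already determined by the labels on $N(v)\cap A$ (namely, $v$ is unoccupied iff both labels $0$ and $1$ appear there), because any occupied even neighbor of a vertex of $G$ is $2$-linked to $A$ and hence lies in $A$. Including it is harmless ($2^{o(N)}$ for $\zeta=o(1)$) but unnecessary.
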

\begin{proof}
By (\ref{as2}), the cost of specifying $A$ is at most $\log {N/4 \choose \le o(N)} = o(N)$,
and that for labeling $A$ is at most $|A|=o(N)$.
But $A$ and its labels  determine $G$ and \emph{its} labels, while
\eqref{allverts'} says that the cost of labeling $\Oh\sm G$ (given $G$)
is at most $N/4-g$ and that the labels for $\Oh\sm G$ determine those for $\cE\sm N(G)$ (and 
all labels on $N(G)\sm A$ are $\gL$).  The lemma follows.\end{proof}

We may thus assume from now on that (say)
\beq{gsmall}
g< N/4,
\enq
so that, by Proposition~\ref{prop:isop},
\beq{tlarge}
\mbox{$t=\gO(g/\sqrt{n})~$ and $~t_i=\gO(g_i/\sqrt{n})~$ for each $i$.}
\enq
This small but crucial point will be used 
repeatedly in what follows; indeed, one may say that the purpose
of Lemmas 1.2 and 1.3 was to get us to \eqref{tlarge}.
(Namely:  Lemmas 1.2 and 1.3 lead to \eqref{as2}; \eqref{as2} is the 
basis for Lemma~\ref{lem:step3main1}; and Lemma~\ref{lem:step3main1}
allows us to restrict to \eqref{gsmall}, where we have \eqref{tlarge}.)

\begin{lem}\label{lem:step3main} For any $a\neq 0$ and $g<N/4$
\beq{gOt}
\log|\{I: |[A]|=a, |G|=g\}|=N/4-\go(t/\log n).
\enq
\end{lem}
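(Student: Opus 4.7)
\emph{Plan.} My approach combines Sapozhenko's container method (Lemma~\ref{lem:GS}) with the algorithmic framework of Section~\ref{sec:alg} and the structural constraint~\eqref{occ.nbr}. First, I would reduce to the case where all components $A_i$ are large ($g_i \ge n^4$); the contribution from configurations involving mostly small components is bounded directly via Proposition~\ref{prop:setcost}, costing only a $2^{o(t/\log n)}$ factor. For the large components, apply Lemma~\ref{lem:GS} to each $A_i$ individually to produce an approximation $(S_i,F_i) \in \cW(a_i,g_i)$. Summing over compositions of $a$ and $g$ into $(a_i)$ and $(g_i)$ (via Proposition~\ref{prop:comp}) and multiplying the per-component costs, the total cost of specifying $(S,F) = \bigsqcup_i (S_i,F_i)$ is $2^{O(t\log^2 n/\sqrt n)} = 2^{o(t/\log n)}$, since $\log^3 n/\sqrt n \to 0$.

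Given $(S,F)$, I would then run \textbf{[Algorithm]} from Section~\ref{sec:alg} on an appropriate $W \subseteq V(Q_n)$ built from $\hat S$, to encode the labels of $I$ on the ``nontrivial'' region. Here I would use the dynamic-degree idea (the new driver flagged at the end of Section~\ref{sec:alg}), lower-bounding $d_{X_{i-1}}(x_i)$ at each step via Proposition~\ref{PZ} rather than the final cutoff $d$; combined with Proposition~\ref{Pstupid}, this gives a sharper encoding cost for $\xi$ than the naive estimate. Outside $\hat G$ the $N/4-g$ odd vertices of $Q_{n-1}$ contribute $2^{N/4-g}$ free $\{0,1\}$-choices (by~\eqref{allverts'}), and labels on $G \setminus F$ contribute at most $2^{g-|F|}$, which is small because property~(c) of Lemma~\ref{lem:GS} forces $|F| \ge |S| - O(t/\sqrt n \log n) \ge a - o(t)$. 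The crucial savings come from~\eqref{occ.nbr}: for each $v \in F$ whose $A$-neighbors carry mixed labels in $\{0,1\}$, $\sigma_v$ is forced to $\Lambda$, saving a bit relative to the canonical count.

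Combining the three contributions --- (i) $o(t/\log n)$ from the geometric approximation, (ii) the dynamic-degree algorithm savings, and (iii) the forced-$\Lambda$ labels on $F$ --- should yield the desired bound $2^{N/4 - \go(t/\log n)}$. The hardest part will be the algorithm step: choosing $W$ and the stopping rule so as to properly balance the tradeoff between $|\supp(\xi)|$ (which controls the encoding cost via Proposition~\ref{Pstupid}) and $|X|$ (which controls what remains via Theorem~\ref{HT} or Theorem~\ref{KPs}), and in particular showing that the dynamic-degree bound --- leveraging Sapozhenko's $d_F(u) \ge n - \sqrt n/\log n$ on $S$ together with the isoperimetric input $t = \gO(g/\sqrt n)$ from~\eqref{tlarge} --- produces a net saving of $\go(t/\log n)$ bits. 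The subtle quantitative heart of the argument is that the $2^a$ cost of choosing labels on $A$ must be dominated by the forced-$\Lambda$ savings on $F$, and it is here that the high-degree condition from Lemma~\ref{lem:GS}(b), ensuring that typical vertices of $F$ see many $S$-neighbors and hence tend to have mixed-label $A$-neighborhoods, will be essential.
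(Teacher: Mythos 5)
Your outline has the right skeleton (containers via Lemma~\ref{lem:GS}, the two-phase encode-then-label strategy via \textbf{[Algorithm]}, and the constraint \eqref{occ.nbr} as the source of the savings), but two essential pieces are missing, and one of your mechanisms does not work as stated. First, you implicitly assume that once $(S,F)$ is specified you know $G$ --- you speak of ``labels on $G\setminus F$'' --- but recovering $(G_i,[A_i])$ from $(S_i,F_i)$ is only affordable when $g_i-f_i$ is small. The paper must split the large components into \emph{tight} ($g_i-f_i\le\eps t_i$) and \emph{slack} ones: for tight $i$ the recovery costs $o(t_i)$, while for slack $i$ it is unaffordable and one must aggregate the $(S_i,F_i)$'s into a single pair $(S,F)$ and pass directly to labeling $S\cup N(S)$, never explicitly identifying the slack $G_i$'s. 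Relatedly, your ledger does not balance: by (c) of Lemma~\ref{lem:GS} the quantity $g-|F|$ is about $t$, not ``small,'' so a separate $2^{g-|F|}$ charge for $G\setminus F$ is of the same order as --- and, since the constant in any $\gO(t)$ saving is not under your control, possibly larger than --- everything you hope to save.

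Second, and more importantly, the quantitative heart --- converting \eqref{occ.nbr} into a saving of $\gO(t_i)$ --- is not achieved by your ``forced-$\Lambda$'' count. There is no reason a vertex of $F$ must see \emph{mixed} labels on its $A$-neighbors (an entire $A_i$ could carry a single label from $\{0,1\}$), and Lemma~\ref{lem:GS}(b) guarantees many neighbors in $S$, not many \emph{occupied} neighbors with distinct labels; you give no way to count the labelings in which few vertices are forced, which is exactly the hard case. The paper's route is different: \eqref{occ.nbr} implies that no edge of $\hat G_i$ can lie in $M_\gG(I\cap Z_2)$, whence $m_\gG(I\cap Z_2)\le a_i=g_i-t_i$ while $|Z_2|\approx 2g_i$, and Theorem~\ref{KPs} (the stability version of Hujter--Tuza) then yields the $g_i-\gO(t_i)$ bound; in the slack case the analogous matching deficit comes from \eqref{exactG}. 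You would also need the two-stage run of \textbf{[Algorithm]} (cutoffs $n^{2/3}$ then $n^{1/3}$): as Remark~\ref{Remark6.5} notes, a single run gives encoding cost $\Theta(g_in^{-1/3}\log n)$, which need not be $o(t_i)$. As written, the proposal defers precisely the steps where the lemma's difficulty lives.
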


To see that this (with Lemma~\ref{lem:step3main1}) gives Lemma~\ref{lem:step3}, note that
we always have $g\geq 2n-2$, and that if $g \le n^2$ (say) then Proposition~\ref{prop:isop1} 
gives $t\sim g$.  Thus Lemma~\ref{lem:step3main} and \eqref{tlarge} bound the number of $I$'s 
satisfying \eqref{gsmall} by 
\[
\mbox{$2^{N/4}\left[ n^42^{-\go(n/\log n)}+ \sum_{g>n^2}g2^{-\go(g/(\sqrt{n}\log n))}\right]$}
= 2^{N/4-\go(n/\log n)} 
\]
(where the irrelevant $n^4$ and initial $g$ in the sum are for
choices of $(g,a)$ and $a$ respectively).

\subsection{Proof of Lemma~\ref{lem:step3main}}

Before beginning in earnest, we dispose of the minor cost of specifying the $a_i$'s and $g_i$'s
(with $\sum a_i\leq a$, $\sum g_i=g$).
The only thing to notice here is that, since $g_i\geq 2n-2$ $\forall i$,
the number of $i$'s is less than $g/n$.
Thus Proposition~\ref{prop:comp} bounds the cost
of the $g_i$'s by $(g/n)\log (en)$
and that of the $a_i$'s by 
\[
\left\{\begin{array}{ll}
(g/n)\log (en)&\mbox{if $(g>)~ a > 2g/n$,}\\
2g/n&\mbox{if $a \le 2g/n$,}
\end{array}\right.
\]
so also the overall ``decomposition" cost by 
\beq{pf:decomp}
O(g\log n/n)=O(t\log n/\sqrt n).
\enq

\mn
\emph{Preview and objective}

It remains to specify $A_i$'s (and thus $G_i$'s and $[A_i]$'s) corresponding to the above parameters, and
a labeling ($\gs$) compatible with these specifications.
For small $i$'s it turns out to be easy to directly identify the $A_i$'s and their labels
(which also gives the associated $G_i$'s and $[A_i]$'s and \emph{their} labels).

For the large $i$'s we think of
``identification" and ``labeling" phases, roughly corresponding to identifying the $[A_i]$'s 
(and $G_i$'s),
and then the restriction of $\gs$ to these sets---``roughly" because
in the most interesting (``slack") case the
first phase will not actually succeed in identifying the $[A_i]$'s.
The identification phase
takes place in the projection on $Q_{n-1}$
and leans mainly on Lemma~\ref{lem:GS}.
For the labeling phase
we return to $Q_n$ and work with maximal independent
sets rather than labelings (recall these are interchangeable),
with arguments again based on the algorithm
of Section~\ref{sec:alg}.
It is here that the crucial role of $\cJ$ will finally appear.

The large $i$'s will be of two types, ``tight" and ``slack."
The slack $i$'s are treated last, when we already have full information on the
small and tight $i$'s.  Here we
produce a single pair $(S,F)\sub \cE\times \Oh$ satisfying
(\emph{inter alia;} e.g.\ the role of $F$ will appear later)
\beq{Ssupseteq}
S\supseteq \cup\{[A_i]: \mbox{$i$ slack}\}
\enq
and
\[
S\cup N(S) ~\text{is disjoint from} ~\cup\{[A_i]\cup G_i:\mbox{$i$ small or tight}\},
\]
and then specify labels for $S\cup N(S)$. 

Since $N(S)\supseteq \cup\{G_i:\mbox{$i$ slack}\}$, \eqref{allverts'} gives
\beq{allverts}
\mbox{all vertices of
$\Oh\sm (\cup\{G_i:i ~\text{small or tight}\} \cup N(S))$ are occupied.}
\enq
Note also that
\[
\mbox{a (legal) labeling is determined by its restriction to $\cup\{[A_i]:i \mbox{ small or tight}\} \cup S \cup \Oh$,}
\]
since each $v$ not in this set (so $v\in \cE$) has at least one occupied neighbor
(for if all neighbors of $v$ are unoccupied, then $v$ is occupied and, by \eqref{gs2}, $N^2(v)$ contains an occupied vertex,
so $v$ must be in some $A_i$).

Thus the cost
of $\gs$ given its restriction to
\[
\cup\{[A_i]\cup G_i:\mbox{$i$ small or tight}\} \cup S\cup N(S)
\]
(so in particular the identity of this set) is at most
\beq{benchmark}
N/4 - \left[\sum\{g_i:\mbox{$i$ small or tight}\} +|N(S)|\right].
\enq

\nin
This gives us a benchmark: 
for Lemma~\ref{lem:step3main},
the cost of the above information (through specification of labels for
$S\cup N(S)$) should be less by $\omega(t/\log n)$ than the subtracted quantity in \eqref{benchmark}
(which in particular makes the decomposition cost \eqref{pf:decomp} negligible).
In the event, this will hold fairly locally: we will wind up paying
$g_i-\gO(t_i)$ for each small or tight $i$ 
and $|N(S)|-\omega(t'/\log n)$ for (all) the slack $i$'s, where $t'=\sum\{t_i:\mbox{$i$ slack}\}$.
(We will repeat this last bit more precisely at the end of the section, following the proof of Lemma \ref{lem:alg2}.)

\mn
\emph{Small $i$'s.}
As suggested above, these are easy. Since $|A_i| \le a_i~(=|[A_i]|)$, the cost of identifying $A_i$, together with its labels,
is at most
\beq{cost:Asmall}
(n-2)+O(a_i \log n) +a_i~ =~ n +O(a_i \log n) 
~<~ g_i - (1/2-o(1))t_i.
\enq
Here the first two terms on the l.h.s., representing the cost of identifying $A_i$, are given
by Proposition~\ref{prop:setcost}, and the final bound follows from 
$g_i\geq \max\{ 2n-2,t_i\}$ and $a_i=O(g_i/n)$, the latter holding for small $i$ by
Proposition~\ref{prop:isop1}.

But $A_i$ and its labels determine $G_i$, $[A_i]$ and their labels
(the labels since all vertices of
$N(G_i) \sm A_i$ are labeled $\gL$); so \eqref{cost:Asmall} actually bounds the total
cost of identifying \emph{and} labeling $[A_i]\cup G_i$.

\bn
\emph{Large $i$'s.}
For a given large $i$,
Lemma~\ref{lem:GS} gives
$\cW=\cW(a_i,g_i)$, $\vp=\vp_i$, $S=S_i$ and $F=F_i$ (as in the lemma), at cost
$O(t_i\log^2n/\sqrt n)$; so the cost of specifying these for all large $i$ is
\beq{cost:approx} 
\mbox{$O(\sum t_i\log^2 n/\sqrt n).$}\enq

Let $\eps=\eps_n$ be a parameter satisfying
\beq{eps}
1 \gg \eps \gg 1/\log n,
\enq
and say $i$ is \textit{tight} if
(with $\eps$ as in \eqref{eps})
\beq{bigS}
g_i-f_i \le \eps t_i
\enq
and \textit{slack} otherwise. (As usual we use $s_i=|S_i|$ and $f_i=|F_i|$. 
The role of $\eps$ is just to enable proper definitions of "tight" and "slack.")

For our purposes the most significant difference between these two possibilities
is that specification of $([A_i], G_i)$ given $(S_i, F_i)$ is cheap if $i$ is tight,
but becomes unaffordable as the difference in \eqref{bigS} grows;
this leads to the following plan.
We first treat tight $i$'s, in each case paying for the full specification of $[A_i]$
(which determines $G_i$)
and then the labels of $[A_i]\cup G_i$.

We then combine and slightly massage the remaining (slack) $S_i$'s and $F_i$'s,
taking account of what we know so far, to produce a single pair $(S,F)$ that in some sense
approximates the slack parts of the configuration, and from $(S,F)$ go directly
to specification of labels (so we learn---implicitly---the 
identities of the slack $[A_i]$'s and $G_i$'s only when we learn their labels.)

\mn
\emph{Tight $i$'s.} 
The next two lemmas bound the total cost of a tight $i$
(so of $[A_i]$, $G_i$ and their labels) by
\beq{cost:tight}
g_i-\gO(t_i).
\enq

\begin{lem}\label{Ptight1}
For tight i, the cost of $([A_i], G_i)$
given $(S_i, F_i)$
is $o(t_i)$.
\end{lem}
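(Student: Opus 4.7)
The plan is to first reduce the problem to specifying only $G_i$, and then to pin down $G_i$ via a small subset of a controlled universe. Since $[A_i]$ is closed, $[A_i] = \{x \in \cE : N_x \subseteq G_i\}$, so $G_i$ determines $[A_i]$; hence it suffices to count the $G_i$'s compatible with $(S_i, F_i)$. Writing $G_i = F_i \cup R_i$ with $R_i := G_i \setminus F_i$, tightness gives $|R_i| = g_i - f_i \le \eps t_i$, and $R_i \subseteq N(S_i) \setminus F_i$ because $G_i = N([A_i]) \subseteq N(S_i)$. So the job becomes counting subsets of $N(S_i) \setminus F_i$ of size at most $\eps t_i$.

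The key step is a sharp bound on the universe $U_i := N(S_i) \setminus F_i$. I would decompose $U_i = R_i \sqcup (N(S_i) \setminus G_i)$ and observe that each $v \in N(S_i) \setminus G_i$ has no neighbor in $[A_i]$ (otherwise $v \in N([A_i]) = G_i$), so every edge from $v$ to $S_i$ originates in $S_i \setminus [A_i]$. Property (b) of Lemma \ref{lem:GS} then yields $|N(S_i) \setminus G_i| \le (s_i - a_i)\sqrt n/\log n$, while property (c) combined with $f_i \le g_i = a_i + t_i$ gives $s_i - a_i = O(t_i)$. Hence $|U_i| = O(t_i\sqrt n/\log n)$.

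Applying Proposition \ref{Pstupid} with $l = |U_i|$ and $r = \eps t_i$ then bounds the number of valid $R_i$'s (and hence of $G_i$'s) by $2^{O(\eps t_i \log((\sqrt n/\log n)/\eps))}$, and the last task is to verify this is $2^{o(t_i)}$. The main obstacle is exactly this verification: the naive estimate introduces a factor $\eps \log n$ in the exponent, which is $\omega(1)$ under the working assumption $\eps \gg 1/\log n$. I expect the proof to overcome this by iterating Sapozhenko's approximation once more (applying it to a suitable refinement of $[A_i]$) to shrink the effective universe from $O(t_i\sqrt n/\log n)$ to $t_i \cdot n^{o(1)}$, or by exploiting the constraint that $R_i$ must give rise to a closed $[A_i]$ of prescribed size $a_i$; once the effective universe is made this small, the $o(t_i)$ bound is immediate.
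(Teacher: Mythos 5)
Your reduction to specifying $G_i$ alone (since $G_i$ determines $[A_i]$) matches the paper, and your bound $|N(S_i)\sm G_i|=O(t_i\sqrt n/\log n)$ via property (b) of Lemma~\ref{lem:GS} is correct; but, as you yourself observe, counting subsets of a universe of that size of size up to $\eps t_i$ costs $\Theta(\eps t_i\log n)=\go(t_i)$, and neither of your two suggested repairs is carried out (nor is either what is actually needed). So the argument has a genuine gap exactly at the step you flag.

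The missing idea is a reference-object device (attributed in the paper to \cite{EKRII}): given $(S_i,F_i)$, fix once and for all a canonical $A^*\in\varphi^{-1}(S_i,F_i)$ --- say the first preimage in some fixed order, so this specification costs nothing --- and set $G^*=N(A^*)$. Then encode $G_i$ by the pair $(G^*\sm G_i,\,G_i\sm G^*)$. The first piece lies in $G^*\sm F_i$, a universe of size $g_i-f_i\le\eps t_i$ by tightness, so costs at most $\eps t_i=o(t_i)$ bits. For the second, note $G_i\sm G^*=N([A_i]\sm A^*)\sm G^*$, so it can be recovered from a set $Y\sub [A_i]\sm A^*\sub S_i\sm A^*$ with $|Y|\le g_i-f_i=o(t_i)$; here the universe $S_i\sm A^*$ has size $s_i-a_i\le t_i+o(t_i)$ by property (c), so the cost is $\log\binom{O(t_i)}{o(t_i)}=o(t_i)$. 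The whole point is that both encodings live in universes of size $O(t_i)$ (or even $O(\eps t_i)$) rather than your $O(t_i\sqrt n/\log n)$, which is precisely what eliminates the fatal $\log n$ factor; no further iteration of Sapozhenko's approximation is required.
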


\begin{lem}\label{lem:alg2'}
The cost of labeling a given $[A_i]\cup G_i$
is 
$g_i-\gO(t_i)$.
\end{lem}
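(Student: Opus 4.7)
The plan is to bound the number of MIS-restrictions $I \cap W$ with $W:=\widehat{[A_i]}\cup\hat G_i$, since (by the MIS/labeling bijection established at the start of Section~\ref{sec:step3}) this count equals the number of labelings of $[A_i] \cup G_i$. The key resource is the isoperimetric profile of $W$ inside $Q_n$: namely $|W|=2(a_i+g_i)=4g_i-2t_i$ and
\[
|\nabla_{Q_n} W| ~=~ 2(n-1)t_i.
\]
The latter follows because every $x\in\widehat{[A_i]}$ has all $n$ of its $Q_n$-neighbors inside $W$ (since $N_{Q_{n-1}}(v)\sub G_i$ whenever $v\in[A_i]$), so the only boundary edges exit $\hat G_i$; and $\sum_{v\in G_i}d_{\cE\sm[A_i]}(v)=(n-1)t_i$.

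Run \textbf{[Algorithm]} on $W$ stopping when the maximum $Q_n$-degree in $X_j$ drops below a suitable threshold $d$ (to be chosen close to $n$). By the tradeoff principle of Section~\ref{sec:alg}, each productive step removes $\ge d$ vertices, so $|\supp(\xi)|\le |W|/d$; Proposition~\ref{Pstupid} thus bounds the cost of $\xi$ by $O((|W|/d)\log d)$, while Proposition~\ref{PZ} (with $Z=X$, $L=2(n-1)t_i$) gives
\[
|X| ~\le~ (2n-d)^{-1}(n|W|+2(n-1)t_i).
\]
On the residual we count MIS's using Theorem~\ref{KPs}: $H:=Q_n[X]$ is triangle-free, and since $I\in\cJ$ we have $|M(I)\gD M^*|=o(N)$ by Lemma~\ref{lem:step2}, so the restriction of $M(I)$ to $X$ yields an induced matching in $H$ saturating $I\cap X$ of size $(1-\eps)|X|/2$, with $\eps=o(1)$ controlled by $|M(I)\gD M^*|$ together with the $O(|W|/d)$ matching-edges removed by the algorithm. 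Theorem~\ref{KPs} then yields $\log\#\{I\cap X\}\le (1-c\eps)|X|/2$.

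Summing the two contributions, the total cost is at most $O((|W|/d)\log d)+(1-c\eps)|X|/2$, which we calibrate (by choosing $d$) to give $g_i-\gO(t_i)$; the $\gO(t_i)$ savings arise jointly from the $L=2(n-1)t_i$ term in Proposition~\ref{PZ} (which shaves $|X|$ below $|W|/2$ by a margin proportional to $t_i$) and from the $(1-c\eps)$ contraction in Theorem~\ref{KPs}. The main obstacle is that $I\cap X$ need not literally be an MIS of $H$ when $W\subsetneq V(Q_n)$: a vertex $v_\eps\in\hat G_i\cap X$ may be dominated only by an $I$-neighbor in $\widehat{\cE\sm[A_i]}$, outside $W$. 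The number of such ``escapes'' is $O(t_i)$ (bounded by $L/(n-1)$), so they can be absorbed either by augmenting $\xi$ to record them (at additional cost $o(t_i)$) or by proving a slightly relaxed maximality bound on $H$ that still supports the Theorem~\ref{KPs} stability argument.
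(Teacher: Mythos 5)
There are two genuine gaps here, both at the heart of why the paper's argument is structured the way it is. First, the single-run calibration of $d$ you propose does not exist; this is precisely the content of Remark~\ref{Remark6.5}. To make the final count deliver a saving of $\gO(t_i)$ you need the residual $|X|$ to exceed $2g_i$ by only $O(t_i)$, and since Proposition~\ref{PZ} gives an excess of roughly $2g_id/n$ while isoperimetry only guarantees $t_i=\gO(g_i/\sqrt n)$ (and $t_i$ can be that small), this forces $d=O(\sqrt n)$; but then your bound $O((|W|/d)\log d)$ on the cost of $\xi$ is $O((g_i/\sqrt n)\log n)=O(t_i\log n)$ in the worst case, which swamps the $\gO(t_i)$ you are trying to save. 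No intermediate $d$ repairs this. The paper's fix --- one of its main new ideas --- is to run \textbf{[Algorithm]} in \emph{two} stages (cutoffs $n^{2/3}$, then $n^{1/3}$ on the resulting $Z_1$), so that the low-threshold run starts from a set of size already only $(1+n^{-1/3})2g_i$ and hence has support $O(t_in^{-1/3}+g_in^{-2/3})$ rather than $O(g_in^{-1/3})$, making both run costs $o(t_i)$.

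Second, your application of Theorem~\ref{KPs} runs backwards. That theorem bounds the number of MIS's $J$ of $H$ with $m_H(J)<(1-\eps)m/2$, i.e.\ those whose associated induced matching is \emph{small}; exhibiting a \emph{large} induced matching saturating $I\cap X$ would place $I\cap X$ outside the counted set and yield no saving. What is actually needed (and what the paper proves via \eqref{occ.nbr}, \eqref{no.edge} and \eqref{rim'1}) is that $m_H(I\cap Z_2)\le a_i=g_i-t_i$: every edge of $\hat{G}_i$ already has a neighbor in $I\cap\hat{A}_i$, so by condition \eqref{Rim'(I)} none of these edges can belong to $M_H(I\cap Z_2)$, forcing the matching onto the $a_i$ edges of $\hat{A}_i$. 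This deficiency of $t_i$ below $|Z_2|/2\approx g_i$ is exactly what feeds $\eps\approx t_i/g_i$ into Theorem~\ref{KPs}; Lemma~\ref{lem:step2} and $M^*$ play no role here, and an unquantified $\eps=o(1)$ would in any case not guarantee a saving of $\gO(t_i)$. (A smaller point: your worry about $I\cap W$ failing to be an MIS of $Q_n[W]$ is unfounded --- since $A_i$ is a 2-component of $\cE^*$, any occupied even vertex adjacent in $Q_{n-1}$ to a vertex of $G_i$ is 2-linked to $A_i$ and hence lies in $A_i\sub [A_i]$, so no ``escape'' exists; and recording $\Theta(t_i)$ escapes would cost $\Theta(t_i\log n)$, not $o(t_i)$, so your proposed patch would not be free even if it were needed.)
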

\nin
\emph{Remark.}  Lemma \ref{lem:alg2'} does not require that $i$ be tight.

\begin{proof}[Proof of Lemma \ref{Ptight1}]
Given $(S_i,F_i)$, fix some $A^*\in \varphi^{-1}(S_i,F_i)$. (Note $A^*$ is closed.
Note also that we are not considering possibilities for $A^*$, just naming a particular
choice associated with $(S_i,F_i)$---e.g.\ the first member of $\vp_i^{-1}(S_i,F_i)$ according to some
order---so the specification costs nothing.  This strangely helpful device is from \cite{EKRII}.)
The key (trivial) point here is that (given $G^*:=A^*$)
\[
\mbox{$(G^* \sm G_i, G_i \sm G^*)$ determines $(G_i,[A]$).}
\]
So we should bound the costs of
$G^* \sm G_i$ and $G_i\sm G^*$.  Since $G^*\sm G_i\sub G^*\sm F_i$,
the cost of $G^* \sm G_i$ is at most $|G^*\sm F_i|\le \eps t_i=o(t_i)$
(since $i$ is tight).

On the other hand,
\[
G_i\sm G^* = N([A_i]\sm A^*)\sm G^*
\]
(since each $x\in G_i\sm G^*$ has a neighbor in $[A_i]$ and none in $A^*$);
so we may specify $G_i\sm G^*$ by specifying a $Y\sub [A_i]\sm A^*\sub S_i\sm A^*$
of size at most $|G_i\sm G^*|\leq g_i-f_i=o(t_i)$ with $G_i\sm G^* = N(Y)\sm G^*$
(let $Y$ contain one neighbor of $x$ for each $x\in G_i\sm G^*$).
But, since $s_i< f_i+o(t_i) \leq g_i+o(t_i)$ (see (c) of Lemma~\ref{lem:GS}), we have
$|S_i\sm A^*|=s_i-a_i\leq t_i+o(t_i)$; and the cost of specifying a subset of size 
$o(t_i)$
from a set of size $O(t_i)$ is $o(t_i)$.
\end{proof}

\begin{proof}[Proof of lemma \ref{lem:alg2'}]
As promised earlier (see the discussion following \eqref{pf:decomp}) we now return to $Q_n$ and,
with $W=\widehat{[A_i]} \cup \hat G_i$,
bound the number of MIS's in $\gG:= Q_n [W]$.
(Note that since $A_i$ is a 2-component of $\cE^*$,
$I\cap W$ is an MIS in $\Gamma$, possibilities for which
correspond to possible (legal) labelings of $[A_i]\cup G_i$).

We run \textbf{[Algorithm]} (of Section~\ref{sec:alg})
twice (or, really, once with a pause; here we index steps by $j$
since $i$ is already taken).
For the first run (on all of $\gG$, with input the unknown $I$) we STOP as soon as
\[
d_{X_j}(x)\le n^{2/3} \mbox{ for all $x \in X_j$}.
\]
This implies $|\supp(\xi)|\leq 2(g_i+a_i)n^{-2/3}$
(note e.g.\ $|\hat{G_i}|=2g_i$), so Proposition~\ref{Pstupid} bounds the cost of this run by
\beq{run.cost1}
(2+o(1))(g_i+a_i)n^{-2/3}\log(n^{2/3}) =o(t_i),
\enq
where the "$o(t_i)$" uses \eqref{tlarge}.
On the other hand, with $Z_1$ the final $X_j$ from this run, Proposition~\ref{PZ} with $Z=Z_1$, $d=n^{2/3}$ and
\beq{firstL}
L =|\nabla(W)| = 2(n-1)(g_i-a_i)
\enq
gives
\begin{eqnarray}
|Z_1| & \le & (2n-n^{2/3})^{-1}(2n(g_i+a_i)+2(n-1)(g_i-a_i))\nonumber\\
&<& (2n-n^{2/3})^{-1}4n g_i ~<~ (1+n^{-1/3})2g_i.\label{Z1bd}
\end{eqnarray}

We next run \textbf{[Algorithm]} on $Q_n[Z_1]$ and STOP as soon as either
\begin{itemize}
\item[(a)] $d_{X_j}(x)\le n^{1/3} \mbox{ for all $x \in X_j$}$ or
\item[(b)] $|X_j|\leq 2a_i$.
\end{itemize}
(Note we treat this as a fresh run rather than a continuation, and recycle
$X_j$ and $\xi$.)

Let $Z_2$ be the final $X_j$ for this run.
From \eqref{Z1bd} and (b) we have
$   
z_1-z_2 \le 2t_i+2n^{-1/3}g_i,
$   
so in view of (a), 
\[
|\supp(\xi)|\le (z_1-z_2)n^{-1/3} \leq 2t_in^{-1/3} + 2g_in^{-2/3} =:r.
\]

\nin
Proposition~\ref{Pstupid} (with this $r$ and $l=|W|\leq 4g_i$) then
bounds the run cost by
\beq{run.cost2}
O((t_in^{-1/3} +g_in^{-2/3})\log n + \log g_i) =o(t_i),
\enq
with the $o(t_i)$ given by \eqref{tlarge}.

Finally we consider the cost of specifying $I\cap Z_2$
(an MIS of $Q_n[Z_2]$).  If the second run ends with $|Z_2|\leq 2a_i$ (as in (b)),
then Theorem~\ref{HT} bounds this cost by
\[
a_i=g_i-t_i.
\]

Suppose instead that the algorithm halts due to (a).
In this case we again use Proposition~\ref{PZ}, now with $Z=Z_2$, $d=n^{1/3}$ and
$L$ as in \eqref{firstL}, to obtain (\emph{cf.} \eqref{Z1bd})
\beq{Z2bd}
|Z_2| <  (1+n^{-2/3})2g_i = 2g_i +o(t_i).
\enq
We now apply Theorem \ref{KPs} in $\gG:=Q_n[Z_2]$.
The key here is \eqref{occ.nbr}, which implies
\beq{no.edge}
\mbox{no edge of $\hat{G}_i$ can belong to $M_\gG(I\cap Z_2)$}
\enq
(since the neighbor promised by \eqref{occ.nbr} cannot come from $I\sm Z_2$,
which has no neighbors in $Z_2$).
It follows that
\beq{rim'1}
m_\gG (I \cap Z_2) \le a_i
\enq
(each edge of $M_\gG (I \cap Z_2) $ meets (possibly meaning equals) one of
the $a_i$ edges of $\hat{A}_i$ and, since  $M_\gG (I \cap Z_2) $ is an induced matching,
the edges met are distinct).
The combination of \eqref{Z2bd}, \eqref{rim'1} and Theorem~\ref{KPs} now again bounds
the cost of $I \cap Z_2$ by $g_i-\gO(t_i)$.

Summarizing, the cost of the two runs of \textbf{[Algorithm]} is $o(t_i)$
(see \eqref{run.cost1}, \eqref{run.cost2}) and, regardless of how these end,
the cost of $I\cap Z_2$ is $g_i-\gO(t_i)$.
The lemma follows.
\end{proof}

\begin{remark}\label{Remark6.5}
Note---\emph{cf.}\ the preview at the end of Section~\ref{sec:alg}---the above argument does not
work if we run \textbf{[Algorithm]} just once, stopping when degrees in $X_j$ fall below $n^{1/3}$;
for our bound on $|\supp(\xi)|$ then becomes $2(g_i+a_i)n^{-1/3}$, so the cost bound in \eqref{run.cost1}
increases to $\Theta(g_in^{-1/3}\log n)$, which need not be small compared to $t_i$.
\end{remark}

\nin
\emph{Slack i's.}
At this point we have found and labeled 
\[
Y:=\cup\left\{[A_i]\cup G_i:\mbox{$i$ small or tight}\right\},
\]
so are left with the slack $i$'s.
As suggested above, these differ from tight $i$'s in that the step that identifies the 
$([A_i],G_i)$'s is no longer affordable, and we instead go directly from the $(S_i,F_i)$'s
to the labeling phase.

Set $Y_\cE=Y \cap \cE$ and $Y_\Oh=Y \cap \Oh$
(so $Y_\cE=\cup\left\{[A_i]:\mbox{$i$ small or tight}\right\}$ and similarly for $Y_\Oh$).
Writing $\cup^s$ and $\sum^s$ for union and sum over slack $i$'s, set
\[
S=(\cup^s S_i) \setminus N(Y_\Oh), ~F=\cup^s F_i, ~
X=N(S) \setminus F
\]
(note $N(Y_\Oh)\supseteq Y_\cE $),
$g'=\sum^sg_i$ and $t'=\sum^s t_i$. Notice that
\beq{gft} g'-f >\eps t'\enq
and that with these definitions we still have the appropriate versions of (a)-(c) of 
Lemma~\ref{lem:GS}, namely:
\begin{itemize}
\item[(a$'$)] $S \supseteq \cup^s[A_i], F \subseteq \cup^sG_i$;
\item[(b')] $d_F(u) \ge n-1-\sqrt n/\log n \quad \forall u \in S$;
\item[(c$'$)] $|S| \le |F| + O(t'/(\sqrt n \log n))$.
\end{itemize}
Here (b') is immediate from the corresponding statement for the $(S_i,F_i)$'s,
as is (c$'$) once we observe that the $F_i$'s are disjoint (since the $G_i$'s are,
and $F_i\sub G_i$).
Similarly, (a$'$) holds because $S_i\supseteq [A_i]$ ($\forall i$) and---the least
uninteresting point here---$N(Y_\Oh)\cap (\cup^s[A_i])=\0$ 
(since there are no
edges between $[A_i]$ and $G_j$ if $i\neq j$).

\mn

The last ingredient in the proof of Lemma~\ref{lem:step3main} is Lemma~\ref{lem:alg2} below,
before turning to which we need a few further observations.

First, we are about to return to $Q_n$ (as in the proof of Lemma~\ref{lem:alg2'}), 
where we will be running \textbf{[Algorithm]} on 
\beq{W}
W:=\hat{S}\cup \hat{F},
\enq
and for use in Proposition~\ref{PZ} will need a bound on $|\nabla W|$.
Setting $\psi =\sqrt{n}/\log n$ (and for the moment still working in $Q_{n-1}$), we have (from (b'))
\beq{s leakage}
|\nabla S \setminus \nabla( S,  F)| \le s\psi\enq
and
\beq{f leakage}\begin{split}|\nabla F \setminus \nabla( S,  F)| &\le f(n-1)-s(n-1-\psi)\\
&=(f-s)(n-1)+s\psi,\end{split}\enq
whence (now in $Q_n$)
\beq{upshot}
L:=|\nabla W|\leq 2(f-s)(n-1)+4s\psi.
\enq

Set $U=\hat S \cup \widehat{N(S)}$.
A second---crucial---observation is
\beq{cru.obs}
\mbox{\emph{$I \cap U$ is an MIS of $Q_n[U]$}.}
\enq
\begin{proof} 
Suppose instead that $x\in U\sm (I\cup N(I\cap U))$.
Then, since $I$ is an MIS of $Q_n$, there are $y\sim x$ and $z\sim x^n$ with 
$y,z\in I$ and $y\not\in U$.
Note this implies $\pi(x)\in N(S)$ (as opposed to $S$),
since otherwise $N(x)\sub U$.
Now $\pi(y)$, $\pi(z)$ are distinct occupied neighbors of $\pi(x)$
(distinct since $y$ and $z$, being in $I$, cannot be adjacent),
meaning that $\pi(x)\in G_i$ for some slack $i$ (slack because $N(S)\cap Y_\Oh=\0$);
but since $A_i$ is a 2-component of $\cE^*$, this implies $\pi(y)\in A_i$ and
$y\in U$, a contradiction.\end{proof}

Finally, we observe that
\beq{long.obs}
\mbox{the edges in $\widehat{N(S)}$ with neighbors in $I\cap U$ are precisely those in $\cup^s\hat{G}_i$.}
\enq
(We have already noted in \eqref{occ.nbr} that
edges in $\cup^s\hat{G}_i$ do have such neighbors (in $\cup^s\hat{A}_i$),
so what \eqref{long.obs} really says is that the remaining edges in $\widehat{N(S)}$ do not.
This is because there are no occupied vertices in $S\sm \cup^s A_i$:
by (b') each $v$ in $S$ has a neighbor in $F$,
so in some slack $G_i$, so if occupied must lie in $A_i$.)
Of course at this point we don't know the $G_i$'s, but what we \emph{can} use from 
\eqref{long.obs} is
\beq{exactG}
\mbox{exactly $g'$ edges in $\widehat{N(S)}$ have neighbors in $I\cap U$ (so in $I\cap \hat{S}$).}
\enq

\begin{lem}\label{lem:alg2}
The cost of labeling $S\cup N(S)$
is at most 
\beq{f+x}
f+x-\gO(\eps t') ~~(=|N(S)|-\gO(\eps t'))
\enq
\end{lem}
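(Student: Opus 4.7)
The approach mirrors Lemma~\ref{lem:alg2'}: we bound $\mis(\Gamma)$ where $\Gamma:=Q_n[U]$ with $U=\hat S\cup\widehat{N(S)}$, by running \textbf{[Algorithm]} and then applying Theorem~\ref{KPs}. Since Theorem~\ref{HT} gives only $\log\mis(\Gamma)\le s+f+x$, we need an extra saving of $s+\Omega(\epsilon t')$, which by (c$'$) amounts essentially to $f+\Omega(\epsilon t')$. The plan: the algorithm peels off $\hat S$ to reduce to $|Z_2|/2\approx f+x$, and Theorem~\ref{KPs} then extracts the remaining $\Omega(\epsilon t')$ via the structural input \eqref{long.obs}.

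Concretely, we run \textbf{[Algorithm]} on $\Gamma$ (with input $J:=I\cap U$) twice, with degree thresholds $n^{2/3}$ and $n^{1/3}$ and a size truncation, in direct analogy with Lemma~\ref{lem:alg2'}. The main input to Proposition~\ref{PZ} is the bound
\[
|\nabla U|\le 2\bigl((f+x-s)(n-1)+s\sqrt n/\log n\bigr),
\]
derived from (b$'$) (using that $N(S)\subseteq\Oh$ is independent in $Q_{n-1}$, so each $v\in N(S)$ has no $N(S)$-neighbors). Combined with (c$'$) ($s\le f+o(t'/\sqrt n)$) and $s+f+x=O(t'\sqrt n)$ (from Proposition~\ref{prop:isop} applied to the $A_i$'s), this yields $|Z_2|/2\le f+x+o(\epsilon t')$, while Proposition~\ref{Pstupid} bounds the total run cost by $o(\epsilon t')$. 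The structural input \eqref{long.obs} now enters: the $g'$ active vertical edges of $\cup^s\hat G_i$ each have a neighbor in $I\cap\cup^s\hat A_i\subseteq I\cap\hat S$, and the argument of \eqref{no.edge}--\eqref{rim'1} excludes these edges from $M_{Q_n[Z_2]}(J\cap Z_2)$. Since $Q_n[\widehat{N(S)}]$ is merely a matching (again because $N(S)$ is independent in $Q_{n-1}$), once the residual $\hat S\cap Z_2$ is accounted for we obtain
\[
m_{Q_n[Z_2]}(J\cap Z_2)\le r+o(\epsilon t'),
\]
where $r=f+x-g'$. Theorem~\ref{KPs} with $\epsilon_{\mathrm{KPs}}\approx g'/(f+x)$ then yields $\log\mis(Q_n[Z_2])\le(f+x)-cg'+o(\epsilon t')$, and the slack bound $g'\ge\epsilon t'$ closes this to $f+x-\Omega(\epsilon t')$.

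The principal obstacle is the tight quantitative control of the $o(\epsilon t')$ errors through each stage. A single run of \textbf{[Algorithm]} at threshold $n^{2/3}$ leaves a Proposition~\ref{PZ} error of order $|W|/n^{1/3}$ in $|Z_1|$, which can well exceed $\epsilon t'$; the second run at threshold $n^{1/3}$ (together with the size truncation), exactly as in the proof of Lemma~\ref{lem:alg2'}, is indispensable for driving this error down. Additionally, one must verify that the residual $\hat S\cap Z_2$ after the two runs contributes only $o(\epsilon t')$ to $m_{Q_n[Z_2]}(J\cap Z_2)$ via horizontal edges; this relies on the low-degree cutoff, which restricts each residual $\hat S$-vertex to at most $n^{1/3}$ neighbors in $Z_2$. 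Finally, the constants implicit in the $cg'$ versus $o(\epsilon t')$ comparison must align---essentially, the error constant must lie below the KPs constant $c$---which is delicate but achievable with careful accounting.
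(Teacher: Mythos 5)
Your overall architecture matches the paper's (two passes of \textbf{[Algorithm]} at thresholds $n^{2/3}$ and $n^{1/3}$, then Theorem~\ref{KPs} with a matching defect extracted from \eqref{long.obs}/\eqref{exactG}), but there is a genuine gap in where you run the algorithm. You run it on all of $U=\hat S\cup\widehat{N(S)}$, whereas the paper runs it only on $W=\hat S\cup\hat F$ and reattaches $X'=\hat X\sm N(P)$ afterwards. This matters because the only available control on $x=|N(S)\sm F|$ is $x\le s\sqrt n/\log n$ (each $u\in S$ has at most $\sqrt n/\log n$ neighbors outside $F$, by (b$'$)); your claim that $s+f+x=O(t'\sqrt n)$ is unjustified for the $x$ term. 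Consequently the Proposition~\ref{Pstupid} run cost, which scales as $|U|n^{-2/3}\log n$, contains a term $O(xn^{-2/3}\log n)=O(g'n^{-1/6})$, and likewise the Proposition~\ref{PZ} excess in $|Z_2|$ contains $O(xn^{-2/3})$. When $t'=\Theta(g'/\sqrt n)$ (the extreme permitted by \eqref{tlarge}) these are larger than $\eps t'=O(\eps g'/\sqrt n)$ by a factor of roughly $n^{1/3}/\eps$, so your asserted ``$o(\eps t')$'' error bounds fail.

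This could be forgiven if your savings were really $\Omega(g')$ rather than $\Omega(\eps t')$, but your defect bound $m\le f+x-g'+o(\eps t')$ is also not justified: it counts all $g'$ active edges of $\cup^s\hat G_i$, including the $f$ of them lying in $\hat F$. Those are exactly the edges the algorithm tends to destroy---processing an $I$-vertex of $\hat S$ deletes its $\approx n$ neighbors in $\hat F$, and roughly $2s\approx 2f$ vertices of $W$ are removed in total---and an active edge with a deleted endpoint contributes nothing to the induced-matching defect in $Q_n[Z_2]$. The only edges one can reliably count are the $g'-f>\eps t'$ active edges in $\hat X$ (this is why the paper's saving is $\gO(\eps t')$ and not $\gO(g')$), and even for these one needs the paper's dichotomy: either $|Z'|\le 2(f+x)-\gO(\eps t')$ and Theorem~\ref{HT} already suffices, or $|Z'|$ is large, which forces $|\hat X\sm X'|=o(\eps t')$ (\eqref{XhatX'}) and hence the survival of almost all active $\hat X$-edges (\eqref{35'}). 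Your proposal gestures at a ``size truncation in direct analogy with Lemma~\ref{lem:alg2'}'' but does not supply this survival argument, and with only an $\gO(\eps t')$ saving the $x$-scale error terms above are fatal. Keeping $\hat X$ out of the algorithm entirely is not a cosmetic choice; it is what makes both halves of the accounting close.
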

\nin
(where $x$ is the size of $X$, which was defined two lines before \eqref{gft}).

\begin{proof}
This is similar to the proof of Lemma~\ref{lem:alg2'}.
We again run \textbf{[Algorithm]} in two stages, but this time only on 
$W$ (defined in \eqref{W}).
As before we STOP the first run when
\[
d_{X_i}(x) \le n^{2/3} ~~\forall x \in X_i,
\]
and let $Z_1$ be the (final) $X_i$ produced by this stage.
We then run the algorithm on $Q_n[Z_1]$, in this case 
stopping as soon as either
\begin{enumerate}
\item[(a)] $d_{X_i}(x) \le n^{1/3}$ for all $x \in X_i$ or
\item[(b)]\label{bee} $|X_i|\leq 2(f-t')$
\end{enumerate}
(of course (b) is possible only if $f\geq t'$), and letting $Z_2$ be the final $X_i$.

As before: the $\xi$ produced by the first run has ($|\xi|\leq |W|=2(s+f)$ and)
$|\supp(\xi)|\leq 2(s+f)n^{-2/3}$, so Proposition~\ref{Pstupid} bounds the cost of this run by
\beq{run.cost1'}
(2+o(1))(s+f)n^{-2/3}\log (n^{2/3})=o(\eps t')
\enq

\nin
(using $s+f \le 2g'$, as follows from (c$'$) and \eqref{gft}, with \eqref{tlarge} and \eqref{eps});
Proposition~\ref{PZ} with $Z=Z_1$, $d=n^{2/3}$ and
$L$ as in \eqref{upshot} gives
\begin{eqnarray}
|Z_1| &<& (2n-n^{2/3})^{-1}[2n(s+f)+2(f-s)n+4s\psi]
\nonumber\\
&=& (2n-n^{2/3})^{-1}[4nf+4s\psi]\nonumber\\
&\leq & 
2f(1+n^{-1/3}) + O(s\psi/n)\nonumber\\
&=&2f(1+n^{-1/3})+o(\eps t')\label{Z1'bd}
\end{eqnarray}
(using $s \psi/n=O(g'/(\sqrt n \log n))=o(\eps t')$, 
which follows from \eqref{tlarge} and \eqref{eps}; this is the reason for the lower bound in \eqref{eps});
(a), (b) and \eqref{Z1'bd}, now with the $\xi$ from the second run, imply 
\[
|\supp(\xi)|\le (z_1-z_2)n^{-1/3} \leq r:= 
\left\{\begin{array}{ll}
(2fn^{-1/3} + O(t'))n^{-1/3} &\mbox{if $f\geq t'$,}\\
(2+o(1))t'n^{-1/3} &\mbox{if $f< t'$;}
\end{array}\right.
\]

Proposition~\ref{Pstupid} with this $r$ and $l=|W|=O(f)$ 
(note (b') implies $s< (1+o(1))f$)
bounds the run cost by
\beq{run.cost2'}
O((fn^{-2/3} +t'n^{-1/3})\log n + \log f) =o(\eps t'),
\enq

\nin
with the $o(\eps t')$ given by \eqref{tlarge} (and $f\leq g'$);
and 
Proposition~\ref{PZ}, with $Z=Z_2$, $d=n^{1/3}$ and, again,
$L$ as in \eqref{upshot}, gives (\emph{cf.}\ \eqref{Z1'bd})
\begin{eqnarray}
|Z_2| &<& (2n-n^{1/3})^{-1}[2n(s+f)+2(f-s)n+4s\psi]
\nonumber\\
&\leq &
2f(1+n^{-2/3})+o(\eps t')~=~ 2f+o(\eps t')\label{Z2'bd}
\end{eqnarray}
(again---as in \eqref{Z1'bd}---using $s \psi/n=o(\eps t')$).

Let $P= I\cap (W\sm Z_2)$ (the set of vertices that were "processed" in the two runs of 
the algorithm and turned out to be in $I$),
$X'=\hat X \setminus N(P)$, $Z'=Z_2\cup X'$ and $\gG=Q_n[Z']$. 
So we are down to identifying $I\cap Z'$
($Z'$ being the set of vertices of $U$ whose membership in $I$ is still in question).
Noting that
\beq{Noting}
\mbox{$I \cap Z'$ is an MIS of $\gG$}
\enq

\nin
(see \eqref{cru.obs}) and recalling that the run costs in \eqref{run.cost1'} and \eqref{run.cost2'} 
were $o(\eps t')$, we find that 
Lemma~\ref{lem:alg2} will follow from
\beq{last}
\mbox{the cost of identifying $I \cap Z'$
is at most
$
f+x-\gO(\eps t').
$}
\enq
(Note we are still 
enforcing \eqref{exactG}.)

If $|Z'| \le 2(f+x)-\gO(\eps t')$ then \eqref{last} is given by 
Theorem \ref{HT} (and \eqref{Noting}).
In particular this is true if the second run ends because of (b),
since then 
$|Z'| \le z_2+2x \le 2(f+x-t')$.

So we are left with cases where the run is stopped by (a) and
\[
|Z'| >2(f+x)-o(\eps t'),
\]
which by \eqref{Z2'bd} implies $x'=2x-o(\eps t')$, i.e.
\beq{XhatX'}
|\hat{X}\sm X'|=o(\eps t').
\enq 
But \eqref{exactG} and the fact that each edge of $\hat{F}$ has a neighbor in $I\cap \hat{S}$
imply that exactly $g'-f>\eps t'$ edges in $\hat X$ have neighbors in $I\cap \hat{S}$, which with \eqref{XhatX'}
yields
\beq{35'}
\mbox{$(1-o(1))\eps t'$ edges in $X'$ have neighbors in $Z_2\cap I\cap \hat{S}$.}
\enq

Now let $M=M_\gG(I\cap Z')$. 
According to the definition of $M_\gG$ (see \eqref{Rim'(I)}) no edge as in \eqref{35'} can be in $M$
(\emph{cf.}\ \eqref{no.edge}), so $M$
fails to cover at least one vertex from each of these edges 
(since, $M$ being induced, $V(M)$ meets any edge not in $M$ at most once).
But then $z' \le 2(f+x)+o(\eps t')$
(which follows from \eqref{Z2'bd} and $z'\le z_2+2x$) implies
\[
m_\gG(I\cap Z') =|M| < (2(f+x)-(1-o(1))\eps t')/2 = f+x-\gO(\eps t'),
\]
and a final application of Theorem~\ref{KPs} 
(with the above bound on $z'$) again gives \eqref{last}, 
completing the proof of Lemma~\ref{lem:alg2}. \end{proof}

In sum (making precise the discussion following \eqref{benchmark}), we have paid:

\begin{enumerate}[$\bullet$]
\item
$O(t\log n/\sqrt n)$ for the decompositions of $a$ and $g$
(see \eqref{pf:decomp});

\item
$g_i-\gO(t_i)$ for specification and labeling of $[A_i]$ and $G_i$
for each small $i$  (see \eqref{cost:Asmall}); 

\item
$O(\sum t_i \log^2 n/\sqrt n)$ for the $(S_i,F_i)$'s, $i$ large (see \eqref{cost:approx});

\item
for each tight $i$, $g_i-\gO(t_i)$ for specification and labeling of $[A_i]$ and $G_i$,
given $(S_i,F_i)$ (see \eqref{cost:tight}); 

\item
$|N(S)|-\gO(\eps t')$ for labeling $S\cup N(S)$, given $(S,F)$ (which is determined by the 
$(S_i,F_i)$'s, together with the $G_i$'s for small and tight $i$);
see \eqref{f+x}. 
\end{enumerate}

Finally, the sum of all these cost bounds is at most
\[
\mbox{$|N(S)| + \sum\{g_i:\mbox{$i$ small or tight}\} +O(\sum t_i\log ^2n/\sqrt{n}) 
-\gO\left(\sum\{t_i:\mbox{$i$ small or tight}\}\right) -\gO(\eps t')$,}
\]
which (recalling $t'=\sum\{t_i:\mbox{$i$ slack}\}$, $t\leq \sum t_i$ and $\eps =\go(1/\log n)$)
is at most
\[
|N(S)| + \sum\{g_i:\mbox{$i$ small or tight}\} -\go(t/\log n);
\]
and combining this with the additional cost in \eqref{benchmark} (paid for the 
remaining labels in $\cO$) gives Lemma \ref{lem:step3main}.


\end{document}